\newcommand{\pa}[1]{\left({#1}\right)}
\newcommand{\norm}[1]{\left\|{#1}\right\|}
\newcommand{\cro}[1]{\left[{#1}\right]}
\newcommand{\ab}[1]{\left|{#1}\right|}
\newcommand{\ac}[1]{\left\{{#1}\right\}}
\def\gpen{\mathop{\rm \bf pen}\nolimits}
\def\pen{\mathop{\rm pen}\nolimits}
\newcommand{\dfleche}[1]{\,\displaystyle{\mathop{\longrightarrow}_{#1}}\,}
\def\E{{\mathbb{E}}}
\def\F{{\mathbb{F}}} 
\def\L{{\mathbb{L}}} 
\def\N{{\mathbb{N}}}
\def\P{{\mathbb{P}}}
\def\Q{{\mathbb{Q}}} 
\def\R{{\mathbb{R}}}
\def\sA{{\mathscr{A}}}
\def\sB{{\mathscr{B}}}
\def\sC{{\mathscr{C}}}
\def\sD{{\mathscr{D}}}
\def\sF{{\mathscr{F}}}
\def\sG{{\mathscr{G}}} 
\def\sH{{\mathscr{H}}}
\def\sI{{\mathscr{I}}}
\def\sM{{\mathscr{M}}}
\def\sP{{\mathscr{P}}}
\def\sQ{{\mathscr{Q}}}
\def\sW{{\mathscr{W}}}
\def\sX{{\mathscr{X}}}
\def\sY{{\mathscr{Y}}}
\DeclareMathAlphabet{\mathscrbf}{OMS}{mdugm}{b}{n}
\def\sbB{{\mathscrbf{B}}}
\def\sbE{{\mathscrbf{E}}}
\def\sbH{{\mathscrbf{H}}}
\def\sbM{{\mathscrbf{M}}}
\def\sbP{{\mathscrbf{P}}}
\def\sbQ{{\mathscrbf{Q}}} 
\def\sbR{{\mathscrbf{R}}}
\def\sbT{{\mathscrbf{T}}}
\def\sbV{{\mathscrbf{V}}}
\def\sbX{{\mathscrbf{X}}}
\def\cB{{\mathcal{B}}}
\def\cD{{\mathcal{D}}}
\def\cF{{\mathcal{F}}}
\def\cG{{\mathcal{G}}} 
\def\cH{{\mathcal{H}}}
\def\cI{{\mathcal{I}}}
\def\cJ{{\mathcal{J}}}
\def\cK{{\mathcal{K}}}
\def\cL{{\mathcal{L}}} 
\def\cM{{\mathcal{M}}}
\def\cN{{\mathcal{N}}}
\def\cP{{\mathcal{P}}}
\def\cQ{{\mathcal{Q}}} 
\def\cR{{\mathcal{R}}}
\def\cT{{\mathcal{T}}}
\def\cU{{\mathcal{U}}}
\def\gh{{\mathbf{h}}}
\def\gp{{\mathbf{p}}}
\def\gq{{\mathbf{q}}} 
\def\gr{{\mathbf{r}}}
\def\gs{{\mathbf{s}}} 
\def\gt{{\mathbf{t}}}
\def\gu{{\mathbf{u}}}
\def\gw{{\mathbf{w}}}
\def\gx{{\mathbf{x}}}
\def\gA{{\mathbf{A}}}
\def\gL{{\mathbf{L}}}
\def\gP{{\mathbf{P}}}
\def\gQ{{\mathbf{Q}}} 
\def\gR{{\mathbf{R}}}
\def\gS{{\mathbf{S}}} 
\def\gT{{\mathbf{T}}}
\def\gZ{{\mathbf{Z}}}
\newcommand{\bs}[1]{\boldsymbol{#1}}
\def\bsX{{\bs{X}}}
\def\gmu{{\bs{\mu}}}
\def\gnu{{\bs{\nu}}}
\def\gup{\bs{\Upsilon}}
\def\cbQ{{\bs{\cQ}}}
\def\cbT{{\bs{\cT}}}
\newtheorem{thm}{Theorem}
\newtheorem{lem}{Lemma}
\newtheorem{prop}{Proposition}
\newtheorem{cor}{Corollary}
\newtheorem{df}{Definition}
\newtheorem{ass}{Assumption}
\newlist{lista}{enumerate}{1}
\setlist[lista,1]{label=\alph*),ref=\alph*),leftmargin=*}
\newlist{listi}{enumerate}{1}
\setlist[listi,1]{label=\roman*),ref=\roman*),leftmargin=*}
\newcommand{\eref}[1]{(\ref{#1})}
\def\1{1\hskip-2.6pt{\rm l}}
\def\<{{\langle}}
\def\>{{\rangle}}
\newcommand{\etc}[1]{#1_1,\ldots,#1_n}
\newcommand{\Etc}[2]{#1_1,\ldots,#1_{#2}}
\newcommand{\str}[1]{\rule{0mm}{#1mm}}
\newcommand{\st}{\strut}
\newcommand{\dps}[1]{\displaystyle{#1}}
\def\eps{{\varepsilon}}
\def\ee{{\epsilon}}
\def\cbL{{\bs{\cL}}}
\def\cbU{{\bs{\cU}}}
\def\co{{c_{0}}}
\def\et{^{\star}}
\begin{document}

\begin{frontmatter}

\title{Rho-estimators revisited: general theory and applications}
\runtitle{Rho-estimators revisited}
\author{\fnms{Yannick} \snm{Baraud}\corref{}\ead[label=e1]{baraud@unice.fr}\thanksref{m1}}
\address{Universit\'e C\^ote d'Azur\\
UMR CNRS 7351\\
Laboratoire Jean Alexandre Dieudonn\'e\\
Parc Valrose\\
06108 Nice cedex 02\\
France.\\ \printead{e1}}
\and
\author{\fnms{Lucien} \snm{Birg\'e}\ead[label=e2]{lucien.birge@upmc.fr}\thanksref{m2}}
\address{Sorbonne Universit\'es\\
 UPMC Univ.\ Paris 06\\
 CNRS UMR 7599\\
 LPMA - Case courrier 188\\
 75252 Paris Cedex 05\\
 France.\\ \printead{e2}}
\affiliation{Universit\'e C\^ote d'Azur \thanksmark{m1} and Sorbonne Universit\'es \thanksmark{m2}}

\maketitle

\begin{abstract}
Following Baraud {\em et al.}~\citeyearpar{MR3595933}, we pursue our attempt to design a robust universal estimator of the joint ditribution of $n$ independent (but not necessarily i.i.d.) observations for an Hellinger-type loss. Given such observations with an unknown joint distribution $\gP$ and a dominated model $\sbQ$ for $\gP$, we build an estimator $\widehat \gP$ based on $\sbQ$ (a $\rho$-estimator) and measure its risk by an Hellinger-type distance. When $\gP$ does belong to the model, this risk is bounded by some quantity which relies on the local complexity of the model in a vicinity of $\gP$. In most situations this bound corresponds to the minimax risk over the model (up to a possible logarithmic factor). When $\gP$ does not belong to the model, its risk involves an additional bias term proportional to the distance between $\gP$ and $\sbQ$, whatever the true distribution $\gP$. From this point of view, this new version of $\rho$-estimators improves upon the previous one described in Baraud {\em et al.}~\citeyearpar{MR3595933} which required that $\gP$ be absolutely continuous with respect to some known reference measure. Further additional improvements have been brought as compared to the former construction. In particular, it provides a very general treatment of the regression framework with random design as well as a computationally tractable procedure for aggregating estimators. We also give some conditions for the Maximum Likelihood Estimator to be a $\rho$-estimator.
Finally, we consider the situation where the Statistician has at his or her disposal many different models and we build a penalized version of the $\rho$-estimator for model selection and adaptation purposes. In the regression setting, this penalized estimator not only allows one to estimate the regression function but also the distribution of the errors.
\end{abstract}

\begin{keyword}[class=MSC]
\kwd[Primary ]{62G35}
\kwd{62G05}
\kwd{62G07}
\kwd{62G08}
\kwd{62C20}
\kwd{62F99}
\end{keyword}

\begin{keyword}
\kwd{$\rho$-estimation}
\kwd{Robust estimation}
\kwd{Density estimation}
\kwd{Regression with random design}
\kwd{Statistical models}
\kwd{Maximum likelihood estimators}
\kwd{Metric dimension}
\kwd{VC-classes}
\end{keyword}

\end{frontmatter}

\section{Introduction}\label{I}
In a previous paper, namely Baraud {\em et al.}~\citeyearpar{MR3595933},
we introduced a new class of estimators that we called $\rho$-estimators for estimating the distribution $\gP$ of a random variable $\bsX=(X_{1},\ldots,X_{n})$ with values in some measurable space $(\sbX,\sbB)$ under the assumption that the $X_{i}$ are independent but not necessarily i.i.d.
These estimators are based on density models, a {\em density model} being a family of densities $\gt$ with respect to some reference measure $\gmu$ on $\sbX$. We also assumed that $\gP$ was absolutely continuous with respect to $\gmu$ with density $\gs$ and, following Le Cam~\citeyearpar{MR0334381}, we measured the performance of an estimator $\widehat{\gs}$ of $\gs$ in terms of $\gh^{2}(\gs,\widehat{\gs})$, where $\gh$ is a Hellinger-type distance to be defined later. Originally, the motivations for this construction were to design an estimator $\widehat \gs$ of $\gs$ with the following properties.

--- Given a density model $\gS$, the estimator $\widehat \gs$ should be nearly optimal over $\gS$ from the minimax point of view, which means that it is possible to bound the risk of the estimator $\widehat \gs$ over $\gS$ from above by some quantity $CD_{n}(\gS)$ which is approximately of the order of the minimax risk over $\gS$.

--- Since in Statistics we typically have uncomplete information about the true distribution of the observations, when we assume that $\gs$ belongs to $\gS$ nothing ever warrants that this is true. We may more reasonably expect that $\gs$ is close to $\gS$ which means that the model $\gS$ is not exact but only approximate and that the quantity $\gh(\gs,\gS)=\inf_{\gt\in\gS}\gh(\gs,\gt)$ might therefore be positive. In this case we would like the risk of $\widehat \gs$ to be bounded by $C'\left[D_{n}(\gS)+\gh^{2}(\gs,\gS)\right]$ for some universal constant $C'$. In the case of $\rho$-estimators, the previous bound can actually be slightly refined and expressed in the following way. It is possible to define on $\gS$ a positive function $R$ such that the risk of the $\rho$-estimator is not larger than $R(\gs)$, with $R(\gs)\le CD_{n}(\gS)$ if $\gs$ belongs to the model $\gS$ and not larger than $C'\inf_{\overline \gs\in\gS}\left[R(\overline{\gs})+\gh^{2}(\gs,\overline{\gs})\right]$ when $\gs$ does not belong to $\gS$.

The weak sensibility of this risk bound to small deviations with respect to the Hellinger-type distance $\gh$ between $\gs$ and an element $\overline \gs$ of $\gS$ covers some classical notions of robustness among which robustness to a possible contamination of the data and robustness to outliers, as we shall see in Section~\ref{S2c}.

There are nevertheless some limitations to the properties of $\rho$-estimators as defined in Baraud {\em et al.}~\citeyearpar{MR3595933}. 
\begin{lista}
\item The study of random design regression required that either the distribution of the design be known or that the errors have a symmetric distribution. We want to relax these assumptions and consider the random design regression framework with greater generality.
\item We always worked with some reference measure $\gmu$ and assumed that all the probabilities we considered, including the true distribution $\gP$ of $\bsX$, were absolutely continuous with respect to $\gmu$. This is quite natural for the probabilities that belong to our models since the models are, by assumption, dominated and, typically, defined via a reference measure $\gmu$ and a family of densities with respect to $\gmu$. Nevertheless, the assumption that the true distribution $\gP$ of the observations be also dominated by $\gmu$ is questionable. We therefore would like to get rid of it and let the true distribution be completely arbitrary, relaxing thus the assumption that the density $\gs$ exists. Unexpectedly, such an extension leads to subtle complications as we shall see below and this generalization is actually far from being straightforward.
\item Our construction was necessarily restricted to countable models rather than the uncoutable ones currently used in Statistics.
\end{lista}
We want here to design a method based on ``probability models" rather than ``density models", which means working with dominated models $\cP$ consisting of probabilities rather than of densities as for $\gS$. Of course, the choice of a dominating measure $\gmu$ and a specific set $\gS$ of densities leads to a probability model $\cP$. This is by the way what is actually done in Statistics, but the converse is definitely not true and there exist many ways of representing a dominated probability model by a reference measure and a set of densities. It turns out --- see Section~\ref{ES1} --- that the performance of a very familiar estimator, namely the MLE (Maximum Likelihood Estimator), can be strongly affected by the choice of a specific version of the densities. Our purpose here is to design an estimator the performance of which only depends on the probability model $\cP$ and not on the choice of the reference measure and the densities that are used to represent it.

In order to get rid of the above-mentioned restrictions, we have to modify our original construction 
which leads to the new version that we present here. This new version retains all the nice properties that we proved in Baraud {\em et al.}~\citeyearpar{MR3595933} and the numerous illustrations we considered there remain valid for the new version. It additionally provides a general treatment of conditional density estimation and regression, allowing the Statistician to estimate both the regression function and the error distribution even when the distribution of the design is totally unknown and the errors admit no finite moments. From this point of view, our approach contrasts very much with that based on the classical least squares. An alternative point of view on the particular problem of estimating a conditional density can be found in Sart~\citeyearpar{Sart:2015kq}. 

A thorough study of the performance of the least squares estimator (or truncated versions of it) can be found in Gy\"orfi {\it et al.}~\citeyearpar{MR1920390} and we refer the reader to the references therein. A nice feature of these results lies in the fact that they hold without any assumption on the distribution of the design. While few moment conditions on the errors are necessary to bound the $\L_{2}$-integrated risk of their estimator, much stronger ones, typically boundedness of the errors, are necessary to obtain exponential deviation bounds. In contrast, in linear regression, Audibert and Catoni~\citeyearpar{MR2906886} established exponential deviation bounds for the risk of some robust versions of the ordinary least squares estimator. Their idea is to replace the sum of squares by the sum of their truncated version in view of designing a new criterion which is less sensitive to possible outliers than the original least squares. Their way of modifying the least squares criterion shares some similarity with our way of modifying the log-likelihood criterion, as we shall see below. However their results require some conditions on the distribution of the design as well as some (weak) moment condition on the errors while ours do not.

It is known, and we shall give an additional example below, that the MLE, which is often considered as a ``universal" estimator, does not possess, in general, the properties that we require and more specifically robustness. An illustration of the lack of robustness of the MLE with respect to Hellinger deviations is provided in Baraud and Birg\'e~\citeyearpar{MR3565484}. Some other weaknesses of the MLE have been described in Le~Cam~\citeyearpar{Lecam-MLE} and Birg\'e~\citeyearpar{MR2243881}, among other authors, and various alternatives aimed at designing some sorts of ``universal" estimators (for the problem we consider here) which would not suffer from the same weaknesses have been proposed in the past by Le Cam~\citeyearpar{MR0334381} and \citeyearpar{MR0395005} followed by Birg\'e~\citeyearpar{MR722129} and \citeyearpar{MR2243881}. The construction of $\rho$-estimators, as described in Baraud {\em et al.}~\citeyearpar{MR3595933} was in this line. In that paper, we actually introduced $\rho$-estimators via a testing argument as was the case for Le Cam and Birg\'e for their methods. This argument remains valid for the generalized version we consider here --- see Lemma~\ref{lem-test} in Appendix~\ref{Hpsi2} --- but $\rho$-estimators can also be viewed as a generalization, and in fact a robustified version, of the MLE. We shall even show, in Section~\ref{EX}, that in favorable situations (i.i.d.\ observations and a convex separable set of densities as a model for the true density) the MLE is actually a $\rho$-estimator and therefore shares their properties.

To explain the idea underlying the construction of $\rho$-estimators, let us assume that we observe an $n$-sample $\bsX=(\etc{X})$ with an unknown density $q$ belonging to a set $\overline{\cQ}$ of densities with respect to some reference measure $\mu$. We may write the log-likelihood of $q$ as $\sum_{i=1}^n\log \left(q(X_i)\st\right)$ and the log-likelihood ratios as
\[
\gL(\bsX,q,q')=\sum_{i=1}^n\log \left(\frac{q'(X_i)}{q(X_{i})}\right)=\sum_{i=1}^n\log \left(q'(X_i)\st\right)-
\sum_{i=1}^n\log \left(q(X_i)\st\right),
\]
so that maximizing the likelihood is equivalent to minimizing with respect to $q$
\[
\gL(\bsX,q)=\sup_{q'\in\overline \cQ}\sum_{i=1}^n\log \left(\frac{q'(X_i)}{q(X_{i})}\right)=\sup_{q'\in\overline{\cQ}}\gL(\bsX,q,q').
\]
This happens simply because of the magic property of the logarithm which says that $\log(a/b)=\log a-\log b$. However, the use of the unbounded log function in the definition of $\gL(\bsX,q)$ leads to various problems that are responsible for some weaknesses of the MLE. Replacing the log function by another function $\varphi$ amounts to replace $\gL(\bsX,q,q')$ by
\begin{equation}
\gT(\bsX,q,q')=\sum_{i=1}^{n}\varphi\pa{{q'(X_{i})}\over {q(X_{i})}}
\label{eq-Tphi}
\end{equation}
which is different from $\sum_{i=1}^n\varphi\left(q'(X_i)\st\right)-\sum_{i=1}^n\varphi\left(q(X_i)\st\right)$ since $\varphi$ is not the log function. We may nevertheless define the analogue of $\gL(\bsX,q)$, namely
\begin{equation}
\gup(\bsX,q)=\sup_{q'\in\overline{\cQ}}\gT(\bsX,q,q')=\sup_{q'\in\overline{\cQ}}\sum_{i=1}^n\varphi\left(\frac{q'(X_i)}{q(X_{i})}\right)
\label{eq-gup1}
\end{equation}
and define our estimator $\widehat{q}(\bsX)$ as a minimizer with respect to $q\in\overline{\cQ}$ of the quantity $\gup(\bsX,q)$. The resulting estimator is an alternative to the maximum likelihood estimator and we shall show that, for a suitable choice of a bounded function $\varphi$, it enjoys various properties, among which robustness, that are often not shared by the MLE.

To analyze the performance of this new estimator, we have to study the behaviour of the process $\gT(\bsX,q,q')$ when $q$ is fixed, $q\cdot\mu$ is close to the true distribution of the $X_{i}$ and $q'$ varies in $\overline{\cQ}$. Since the function $\varphi$ is bounded, the process is similar to those considered in learning theory for the purpose of studying empirical risk minimization. As a consequence, the tools we use are also similar to those described in great detail in Koltchinskii~\citeyearpar{MR2329442}.

It is well-known that working with a single model for estimating an unknown distribution 
is not very efficient unless one has very precise pieces of information about the true distribution, which is rarely the case. Working with many models simultaneously and performing model selection improves the situation drastically. Refining the previous construction of $\rho$-estimators by adding suitable penalty terms to the statistic $\gT(\bsX,q,q')$ allows one to work with a finite or countable family of probability models $\{\cP_{m},\,m\in\cM\}$ instead of a single one, each model ${\cP}_{m}$ leading to a risk bound of the form $C'\left[D_{n}(\cP_{m})+\gh^{2}\left(\cP_{m},\gP\right)\right]$, and to choose from the observations a model with approximately the best possible bound which results in a final estimator $\widehat{\gP}$ and a bound for $\gh^{2}(\widehat{\gP},\gP)$ of the form 
\[
C''\inf_{m\in\cM}\left[D_{n}(\cP_{m})+\gh^{2}\left(\cP_{m},\gP\right)+\Delta_{m}\right]
\] 
where the additional term $\Delta_{m}$ is connected to the complexity of the family of models we use. 

The paper is organised as follows. We shall first make our framework, which is based on dominated families of probabilities rather than families of densities with respect to a given dominating measure, precise in Section~\ref{F}. This section is devoted to the definition of models and of our new version of $\rho$-estimators, then to the assumptions that the function $\varphi$ we use to define the statistic $\gT$ in (\ref{eq-Tphi}) should satisfy. 
In Section~\ref{MRI}, we define the $\rho$-dimension function of a model, a quantity which measures the difficulty of estimation within the model using a $\rho$-estimator, and present the main results, namely the performance of these new $\rho$-estimators. Section~\ref{sect-ModGene} is devoted to the extension of the construction from countable to uncountable
statistical models (which are the ones currently used in Statistics) under suitable assumptions. We describe the robustness properties of $\rho$-estimators in Section~\ref{S2c}. In Section~\ref{EX} we 
investigate the relationship between $\rho$-estimators and the MLE when the model is a convex set of densities. Section~\ref{sect-DM} provides various methods that allow one to bound the $\rho$-dimension functions of different types of models and indicates how these bounds are to be used to bound the risk of $\rho$-estimators in typical situations with applications to the minimax risk over classical statistical models. We also provide a few examples of computations of bounds for the $\rho$-dimension function. Many applications of our results about $\rho$-estimators have already been given in Baraud {\em et al.}~\citeyearpar{MR3595933} and we deal here with a new one: estimation of conditional distributions in Section~\ref{CD}. In Section~\ref{RS} we apply this additional result to the special case of random design regression when the distribution of the design is completely unknown, a situation for which not many results are known. We provide here a complete treatment of this regression framework with simultaneous estimation of both the regression function and the density of the errors. Section~\ref{S-ES-A}  is devoted to estimator selection and aggregation: we show there how our procedure can be used either to select an element from a family of preliminary estimators or to aggregate them in a convex way. The Appendices (Supplementary material) contain the proofs as well as some additional facts.

\section{Our new framework and estimation strategy}\label{F}
As already mentioned, our method is based on statistical models which are sets of probability distributions, in opposition with more classical models which are sets of densities with respect to a given dominating measure. 

\subsection{A probabilistic framework}\label{F2}
We observe a random variable $\bsX=(X_{1},\ldots,X_{n})$ defined on some probability space $(\Omega,\Xi,\P)$ with independent components $X_{i}$ and values in the measurable product space $(\sbX,\sbB)=(\prod_{i=1}^{n}\sX_{i},\bigotimes_{i=1}^{n}\sB_{i})$. We denote by $\sbP$ the set of all product probabilities on $(\sbX,\sbB)$ and by $\gP=\bigotimes_{i=1}^{n} P_{i}\in\sbP$ the true distribution of $\bsX$. We identify an element $\gQ=\bigotimes_{i=1}^{n} Q_{i}$ of $\sbP$ with the $n$-tuple $(Q_{1},\ldots,Q_{n})$ and extend this identification to the elements $\gmu=\bigotimes_{i=1}^{n}\mu_{i}$ of the set $\sbM$ of all $\sigma$-finite product measures on $(\sbX,\sbB)$. 

When $\gQ$ is absolutely continuous with respect to $\gmu\in\sbM$ ($\gQ\ll\gmu$) or, equivalently, $\gmu$ dominates $\gQ$, each $Q_{i}$, for $i=1,\ldots,n$, is absolutely continuous with respect to $\mu_{i}$ with density $q_{i}$ so that $Q_{i}=q_{i}\cdot\mu_{i}$. We denote by $\cL(\mu_{i})$ the set of all densities with respect to $\mu_{i}$, i.e.\ the set of measurable functions $q$ from $\sX_{i}$ to $\R_{+}$ such that $\int_{\sX_{i}}q(x)\,d\mu_{i}(x)=1$. We then write $\gQ=\gq\cdot \gmu$ where $\gq$ is the $n$-tuple $(q_{1},\ldots,q_{n})$ and we say that $\gq$ is a density for $\gQ$ with respect to $\gmu$. We denote by $\cbL(\gmu)=\prod_{i=1}^{n}\cL(\mu_{i})$ the set of such densities $\gq$ and by $\sbP^{\gmu}$ the set of  all those $\gP'\in\sbP$ which are absolutely continuous with respect to $\gmu$.

Our aim is to estimate the unknown distribution $\gP=(P_{1},\ldots,P_{n})$ from the observation of $\bsX$. In order to evaluate the performance of an estimator $\widehat \gP(\bsX)\in\sbP$ of $\gP$, we shall introduce, following Le~Cam~\citeyearpar{MR0395005}, an Hellinger-type distance $\gh$ on $\sbP$.
We recall that, given two probabilities $Q$ and $Q'$ on a measurable space $(\sX,\sB)$, the Hellinger distance and the Hellinger affinity between $Q$ and $Q'$ are respectively given by  
\begin{equation}
\left\{
\begin{array}{ll}  
h^{2}(Q,Q')&=\dps{{1\over 2}\int_{\sX}\pa{\sqrt{dQ\over d\mu}-\sqrt{dQ'\over d\mu}}^{2}d\mu},\\
\rho(Q,Q')&=\dps{\int_{\sX}\sqrt{{dQ\over d\mu}{dQ'\over d\mu}}d\mu=1-h^{2}(Q,Q')},
\end{array}\right.
\label{eq-hell}
\end{equation}
where $\mu$ denotes any measure that dominates both $Q$ and $Q'$, the result being independent of the choice of $\mu$.
The Hellinger-type distance $\gh(\gQ,\gQ')$ and affinity $\bs{\rho}(\gQ,\gQ')$ between two elements $\gQ=(Q_{1},\ldots,Q_{n})$ and $\gQ'=(Q_{1}',\ldots,Q_{n}')$ of $\sbP$ are then given by the formulas
\[
\gh^{2}(\gQ,\gQ')=\sum_{i=1}^{n}h^{2}(Q_{i},Q_{i}')=\sum_{i=1}^{n}\cro{1-\rho(Q_{i},Q_{i}')}=n-\bs{\rho}(\gQ,\gQ').
\]
We shall denote by $\sbV$ the topology of the metric space $(\sbP,\gh)$.

\subsection{Models and their representations}\label{ES0}
Let us start with this  definition:
%
\begin{df}\label{def-model}
We call {\em model} any dominated subset $\overline \sbQ$ of $\sbP$ and we call {\em representation} of (the model) $\overline\sbQ$ a pair $\cR(\overline\sbQ)=(\gmu,\overline\cbQ)$ where $\gmu=(\etc{\mu})$ is a $\sigma$-finite measure  which dominates $\overline\sbQ$ and $\overline\cbQ$ is a subset of $\cbL(\gmu)$ such that for any $\gQ$ in $\overline\sbQ$ there exists a unique density $\gq\in \overline\cbQ$ with $\gQ=\gq\cdot \gmu$.
\end{df}
This means that, given a representation $(\gmu,\overline\cbQ)$ of the model $\overline\sbQ$, we can associate to each probability $\gQ\in\overline\sbQ$ a density $\gq\in\overline\cbQ$ and vice-versa. Clearly a dominated subset $\overline\sbQ$ has different representations depending on the choice of the dominating measure $\gmu$ and the versions of the densities $q_{i}=dQ_{i}/d\mu_{i}$. 

Our estimation strategy is based on specific dominated subsets of $\sbP$ that we call {\em $\rho$-models}. 
%
\begin{df}\label{def-rhomodel}
A $\rho$-model is a {\em countable} (which in this paper always means either finite or infinite and countable) subset $\sbQ$ of $\sbP$.
\end{df}

A $\rho$-model $\sbQ$ being countable, it is necessarily dominated. One should think of it as a probability set to which the true distribution is believed to be close (with respect to the Hellinger-type distance $\gh$). 

\subsection{Construction of a $\rho$-estimator on a model $\sbQ$}\label{ES1}
Given the model $\sbQ$, our estimator is defined as a random element of ${\rm Cl}\!\left(\sbQ\right)$, where ${\rm Cl}\!\left(\st\sbR\right)$ denotes the closure of the subset $\sbR$ of $\sbP$ in the metric space $(\sbP,\gh)$, and its construction relies on a particular representation  $\cR(\sbQ)$ of the model $\sbQ$. It actually depends on three elements with specific properties to be made precise below:
%
\begin{listi}
\item A function $\psi$ (which will serve as a substitute for the logarithm to derive an alternative to the MLE) with the following properties: 
%
\begin{ass}\label{ass-psi}
The function $\psi$ is non-decreasing from $[0,+\infty]$ to $[-1,1]$, Lipschitz and satisfies
\begin{equation}
\psi(x)=-\psi(1/x)\ \ \mbox{for all}\ \ x\in [0,+\infty),\quad\mbox{hence }\psi(1)=0.
\label{H-phi}
\end{equation}
\end{ass}
Throughout this paper we shall only consider, without further notice, functions $\psi$ satisfying Assumption~\ref{ass-psi}.
\item A model $\sbQ\subset \sbP$ (in most cases a $\rho$-model) with a representation $\cR(\sbQ)=(\gmu,\cbQ)$.
\item A {\it penalty function} ``$\gpen$" mapping $\sbQ$ to $\R$, the role of which will be explained later in Section~\ref{MRI}. We may, at first reading, assume that this penalty function is identically 0. 
%
\end{listi}
It is essential to note that the dominating measure $\gmu$ is chosen by the statistician and that there is no reason that the true distribution $\gP$ of $\bsX$ be absolutely continuous with respect to $\gmu$. On the contrary, all probabilities $\gP'$ on $\sbX$ belonging to ${\rm Cl}\!\left(\st{\sbQ}\right)$ are absolutely continuous with respect to $\gmu$.

Given the function $\psi$ and the representation $\cR(\sbQ)$, we define the real-valued function $\gT$ on $\sbX\times\cbQ\times\cbQ$ by
\begin{equation}\label{def-T}
\gT(\gx,\gq,\gq')=\sum_{i=1}^{n}\psi\pa{\sqrt{{q_{i}'(x_{i})}\over {q_{i}(x_{i})}}}\;\mbox{ for }
\gx=(x_{1},\ldots,x_{n})\in\sbX\;\mbox{ and }\;\gq,\gq'\in {\cbQ},
\end{equation}
with the conventions $0/0=1$ and $a/0=+\infty$ for all $a>0$.  We then set (with $\gQ=\gq\cdot\gmu$ and $\gQ'=\gq'\cdot\gmu$)
\begin{equation}
\gup(\bsX,\gq)=\sup_{\gq'\in\cbQ}\cro{\gT(\bsX,\gq,\gq')-\gpen(\gQ')}+\gpen(\gQ)\quad\mbox{for all }\gq\in\cbQ.
\label{def-gup}
\end{equation}
%
\begin{df}[$\rho$-estimators]\label{def-rhoest}
Let $\sbE(\psi,\bsX)$ be the (nonvoid) set
\begin{equation}
\sbE(\psi,\bsX)=\ac{\gQ=\gq\cdot \gmu,\;\gq\in\cbQ\;, \; \gup(\bsX,\gq)< \inf_{\gq'\in\cbQ}\gup(\bsX,\gq')+\frac{\kappa}{25}}
\label{def-sE}
\end{equation}
where the positive constant $\kappa$ is given by (\ref{cond-kappa}) below.
A {\em $\rho$-estimator} $\widehat \gP=\widehat \gP(\bsX)$ relative to $(\cR(\sbQ),\gpen)$ is any (measurable) element of ${\rm Cl}\!\left(\st\sbE(\psi,\bsX)\right)$.
\end{df}
Since $\widehat \gP$ belongs to  ${\rm Cl}\!\left(\st\sbE(\psi,\bsX)\right)$, the elements of which are dominated by $\gmu$, there exists a random density $\widehat \gp=(\widehat p_{1},\ldots,\widehat p_{n})$ with $\widehat p_{i}\in\cL(\mu_{i})$ for $i=1,\ldots,n$ such that $\widehat \gP=\widehat \gp\cdot \gmu$. Note that $\widehat \gP$ might not belong to ${\sbQ}$. 

As an immediate consequence of Assumption~\ref{ass-psi} and the convention $1/0=+\infty$,  $\psi(+\infty)=-\psi(0)$ and
\begin{equation}
\gT(\bsX,\gq,\gq')=-\gT(\bsX,\gq',\gq)\quad\mbox{for all }\gq,\gq'\in\cbQ.
\label{eq-antisym}
\end{equation}
Moreover,
\[
\gup(\bsX,\gq)\ge \cro{\gT(\bsX,\gq,\gq)-\gpen(\gQ)}+\gpen(\gQ)=\gT(\bsX,\gq,\gq)=n\psi(1)=0
\]
for all $\gq\in\cbQ$, which implies that any element $\widehat \gP=\widehat \gp\cdot \gmu$ in ${\sbQ}$ such that $\gup(\bsX,\widehat \gp)<\kappa/25$ is a $\rho$-estimator. In particular, when $\gpen(\gQ)=0$ for all $\gQ\in{\sbQ}$ (which we shall write in the sequel $\gpen={\bf 0}$) and $\gup(\bsX,\widehat \gp)=0$, it follows from (\ref{def-gup}) that
\[
\gT(\bsX,\widehat \gp,\gq)\le\gup(\bsX,\widehat \gp)=0=\gT(\bsX,\widehat \gp,\widehat \gp)\le-\gT(\bsX,\widehat \gp,\gq)=\gT(\bsX,\gq,\widehat \gp)
\]
for all $\gq\in\cbQ$. This means that, in this case, $(\widehat \gp,\widehat \gp)$ is a saddle point of the map $(\gq,\gq')\mapsto \gT(\bsX,\gq,\gq')$.

A $\rho$-estimator $\widehat \gP$ depends on the chosen representation $\cR({\sbQ})$ of ${\sbQ}$ and there are different versions of the $\rho$-estimators associated to ${\sbQ}$, even though, most of the time, ${\sbQ}$ will directly be given by a specific representation, that is a family ${\cbQ}$ of densities with respect to some reference measure $\gmu$. Here is the important point, to be proven in Section~\ref{MRI}: when $\sbQ$ is {\em a $\rho$-model} the risk bounds we shall derive only depend on ${\sbQ}$ and the penalty function but not on the chosen representation of $\sbQ$, which allows us to choose the more convenient one for the construction. In contrast, the performances of many classical estimators are sensitive to the representation of the model ${\sbQ}$ and this is in particular the case of the MLE as shown by the following example.
%
\begin{prop}\label{CEX-MLE}	
Let us consider a sequence of i.i.d.\ random variables $(X_{k})_{k\ge 1}$ defined on a measurable space $(\Omega,\sA,\P)$ with normal distribution $P_{\theta}=\cN(\theta,1)$ for some unknown $\theta\in\R$. We choose for reference measure $\mu=\cN(0,1)$ and for the version of $dP_{\theta}/d\mu$, $\theta\in\R$, the function
\begin{equation}
p_{\theta}(x)=\exp\cro{\theta x-\left(\theta^{2}/2\right)+(\theta^{2}/2)\exp\left(x^{2}\right)\1_{\theta}(x)\1_{(0,+\infty)}(\theta)}.
\label{eq-bad MLE}
\end{equation}
Whatever the value of the true parameter $\theta$, on a set of probability tending to 1 when $n$ goes to infinity, the MLE is given by $X_{(n)}=\max\{X_{1},\ldots,X_{n}\}$ and is therefore inconsistent.
\end{prop}
The proof of Proposition~\ref{CEX-MLE} is given in Section~\ref{proof-EXgaussien} of the Appendix. Note that the usual choice for $p_{\theta}$: $x\mapsto\exp\left[-x\theta+(\theta^{2}/2)\right]$ for $dP_{\theta}/d\mu$ is purely conventional. Mathematically speaking our choice (\ref{eq-bad MLE}) is perfectly correct but leads to an inconsistent MLE. Also note that the usual tools that are used to prove consistency of the MLE, like bracketing entropy (see for instance Theorem~7.4 of van~de~Geer~\citeyearpar{MR1739079}) are not stable with respect to changes of versions of the densities in the family. The same is true for arguments based on VC-classes that we used in Baraud {\em et al.}~\citeyearpar{MR3595933}. 
Choosing a convenient set of densities  to work with is well-grounded as long as the reference measure $\gmu$ not only dominates the model but also the true distribution $\gP$. If not, sets of null measure with respect to $\gmu$ might have a positive probability  under $\gP$ and it becomes unclear how the choice of this set of densities influences the performance of the estimator. 

\subsection{Notations and conventions}\label{ES3}
Throughout this paper, given a representation $\cR(\overline{\sbQ})=(\gmu,\overline{\cbQ})$ of a model $\overline{\sbQ}$, we shall use lower case letters $\gq,\gq',\ldots$ and $q_{i},q_{i}',\ldots$ for denoting the chosen densities of $\gQ,\gQ',\ldots$ and $Q_{i},Q_{i}',\ldots$ with respect to the reference measures $\gmu$ and $\mu_{i}$ respectively for all $i=1,\ldots,n$. 
We set $\log_{+}(x)=\max\{\log x,0\}$ for all $x>0$; $|A|$ denotes the cardinality of the set $A$; $\sbB(\gP,r)=\left\{\left.\gQ\in\sbP\,\right|\,\gh(\gP,\gQ)\le r\right\}$ is the closed Hellinger-type ball in $\sbP$ with center $\gQ$ and radius $r$. Given a set $E$, a non-negative function $\ell$ on $E\times E$,  $x\in E$ and $A\subset E$,  we set $\ell(x,A)=\inf_{y\in A}\ell(x,y)$. In particular, for $\sbR\subset\sbP$, $\gh(\gP,\sbR)=\inf_{\gR\in\sbR}\gh(\gP,\gR)$. We set $x\vee y$ and $x\wedge y$ for $\max\{x,y\}$ and $\min\{x,y\}$ respectively. By convention $\sup_{\varnothing}=0$, the ratio $u/0$ equals $+\infty$ for $u>0$, $-\infty$ for $u<0$ and 1 for $u=0$.

\subsection{Our assumptions}\label{ES2}
Given the $\rho$-model ${\sbQ}$, let us now indicate what properties the function $\psi$ (satisfying Assumption~\ref{ass-psi}) are required in view of controlling the risk of the resulting $\rho$-estimators.
\begin{ass}\label{H-debase}\mbox{}
Let ${\sbQ}$ be the $\rho$-model to be used for the construction of $\rho$-estimators.
There exist three positive constants $a_{0}$, $a_{1}$, $a_{2}$ with $a_{0}\ge1\ge a_{1}$ and $a_{2}^{2}\ge 1\vee(6a_{1})$ such that, whatever the representation $\cR({\sbQ})=(\gmu,{\cbQ})$ of ${\sbQ}$, the densities $\gq,\gq'\in {\cbQ}$, the probability $\gR\in\sbP$ and $i\in\{1,\ldots,n\}$,
\begin{align}
\int_{\sX_{i}}\psi\pa{\sqrt{q'_{i}\over q_{i}}}\,dR_{i}&\le a_{0}h^{2}(R_{i},Q_{i})-a_{1}h^{2}(R_{i},Q'_{i})\label{eq-esp}\\
\int_{\sX_{i}}\psi^{2}\pa{\sqrt{q'_{i}\over q_{i}}}\,dR_i&\le a_{2}^{2}\cro{h^{2}(R_{i},Q_{i})+h^{2}(R_{i},Q'_{i})}.
\label{eq-var}
\end{align}
\end{ass}
Note that the left-hand sides of~\eref{eq-esp} and~\eref{eq-var} depend on the choices of the reference measures $\mu_{i}$ and versions of the densities $q_{i}=dQ_i/d\mu_i$ and $q_{i}'=dQ'_i/d\mu_i$ while the corresponding right-hand sides do not. 

Given $\psi$ that satisfies Assumption~\ref{H-debase}, the values of $a_{0}$, $a_{1}$ and $a_{2}$ are clearly not uniquely defined but, in the sequel, when we shall say that Assumption~\ref{H-debase} holds, this will mean that the function $\psi$ satisfies \eref{eq-esp} and \eref{eq-var} with given values of these constants which will therefore be considered as fixed once $\psi$ has been chosen. When we shall say that some quantity depends on $\psi$ it will implicitely mean that it depends on these chosen values of $a_{0}$, $a_{1}$ and $a_{2}$. 

An important consequence of (\ref{eq-antisym}), \eref{eq-esp} and \eref{eq-var} is the fact that, for all $\gQ$, $\gQ'$ in ${\sbQ}$ and $\gP\in\sbP$,
\begin{align}
a_{1}\gh^{2}(\gP,\gQ)-a_{0}\gh^{2}(\gP,\gQ')&\le \E\cro{\gT(\bsX,\gq,\gq')}\label{eq-ET}\\
& \le a_{0}\gh^{2}(\gP,\gQ)-a_{1}\gh^{2}(\gP,\gQ').\nonumber 
\end{align}
These inequalities follow by summing the inequalities (\ref{eq-esp}) with respect to $i$ with $\gR=\gP$, then exchanging the roles of $\gQ$ and $\gQ'$ and applying \eref{eq-antisym}. They imply that the sign of $\E\cro{\gT(\bsX,\gq,\gq')}$ tells us which of the two distributions $\gQ$ and $\gQ'$ is closer to the true one when the ratio between the distances $\gh(\gP,\gQ)$ and $\gh(\gP,\gQ')$ is far enough from one.

In view of checking that a given function $\psi$ satisfies Assumption~\ref{H-debase}, the next result to be proved in Section~\ref{sect-pprop-universel} of the Appendix is useful. 
%
\begin{prop}\label{prop-universel}
If, for a particular representation $\cR({\sbQ})=(\gmu,{\cbQ})$ of the $\rho$-model ${\sbQ}$ and any probability $\gR\in\sbP^{\gmu}$, the function $\psi$ satisfies~\eref{eq-esp} and~\eref{eq-var} for positive constants $a_{0}>2$, $a_{1}\le [(a_{0}-2)/2]\wedge 1$ and $a_{2}^{2}\ge 1\vee(6a_{1})$, then it satisfies Assumption~\ref{H-debase} with the same constants $a_{0},a_{1}$ and $a_{2}$.
\end{prop}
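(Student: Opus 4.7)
The plan is to bootstrap the hypothesis, which applies only to the reference representation $(\gmu,\cbQ)$ and to $\gR\ll\gmu$, to an arbitrary representation and an arbitrary $\gR$. Since both inequalities \eref{eq-esp} and \eref{eq-var} are coordinate-wise statements, I fix a coordinate $i$ and drop subscripts: write $(\mu_{0},q^{0},q'^{0})$ for the reference representation, $(\mu,q,q')$ for an arbitrary representation of the same pair $Q=q^{0}\mu_{0}=q\mu$, $Q'=q'^{0}\mu_{0}=q'\mu$, and $R$ for an arbitrary probability on $\sX$. The slack between the hypothesis ($a_{0}>2$, $a_{1}\le(a_{0}-2)/2$) and the conclusion ($a_{0}\ge 1\ge a_{1}$) is what will absorb the contribution of the part of $R$ singular to either $\mu$ or $\mu_{0}$.

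I would introduce the common dominating measure $\nu=\mu+\mu_{0}+R$ and the densities $f=d\mu/d\nu$, $f^{0}=d\mu_{0}/d\nu$, $\alpha=dQ/d\nu$, $\beta=dQ'/d\nu$. Because $Q,Q'\ll\mu$ and $\ll\mu_{0}$, both $\alpha$ and $\beta$ vanish on $\{f=0\}\cup\{f^{0}=0\}$. I then split $R=R^{++}+R^{\mathrm{rest}}$ with $R^{++}=R\mathbf{1}_{\{f>0,f^{0}>0\}}$. Three elementary facts control the integral: on $\{f>0,f^{0}>0\}$, the ratios $q'/q$ and $q'^{0}/q^{0}$ both coincide with $\beta/\alpha$, so the integrand is intrinsic; on $\{f>0,f^{0}=0\}$, one has $\alpha=\beta=0$, and by the convention $0/0=1$ together with $\psi(1)=0$ (obtained from \eref{H-phi} at $x=1$), the integrand vanishes; on $\{f=0\}$ the versions of $q,q'$ are arbitrary but $|\psi|\le 1$. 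These yield
\[
\int\psi\!\left(\sqrt{q'/q}\right)dR\le \int\psi\!\left(\sqrt{q'^{0}/q^{0}}\right)dR^{++}+R(\{f=0\}).
\]
Since $R^{++}\ll\mu_{0}$, applying the hypothesis to $\tilde R^{++}=R^{++}/\|R^{++}\|$ (the case $\|R^{++}\|=0$ is trivial: it forces $R\perp Q$, so $h^{2}(R,Q)=1$ and $\int\psi\,dR\le 1\le a_{0}-a_{1}$) bounds the first term by $\|R^{++}\|\bigl[a_{0}h^{2}(\tilde R^{++},Q)-a_{1}h^{2}(\tilde R^{++},Q')\bigr]$.

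I next rewrite everything in terms of $A=h^{2}(R,Q)$ and $B=h^{2}(R,Q')$. The key observation is that $R^{\mathrm{rest}}$ is concentrated on $\{f=0\}\cup\{f^{0}=0\}$, which is both $Q$- and $Q'$-null; hence $R^{\mathrm{rest}}\perp Q$ and $R^{\mathrm{rest}}\perp Q'$. Setting $s=\|R^{++}\|^{1/2}$, the additivity of the Hellinger affinity on disjoint supports gives $\rho(R,Q)=s\,\rho(\tilde R^{++},Q)$, from which $s\,h^{2}(\tilde R^{++},Q)=A-(1-s)$ and similarly $s\,h^{2}(\tilde R^{++},Q')=B-(1-s)$; in particular $A,B\ge 1-s$. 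Using also $R(\{f=0\})\le\|R^{\mathrm{rest}}\|=1-s^{2}$, the previous bound becomes
\[
\int\psi\!\left(\sqrt{q'/q}\right)dR\le s(a_{0}A-a_{1}B)-(a_{0}-a_{1})s(1-s)+(1-s^{2}),
\]
and the target inequality $\int\psi(\sqrt{q'/q})\,dR\le a_{0}A-a_{1}B$ reduces, after factoring out $1-s$ (the case $s=1$ being trivial), to $a_{0}A-a_{1}B\ge 1-s(a_{0}-a_{1}-1)$. Using $A\ge 1-s$ and $B\le 1$, this follows from $a_{0}-a_{1}-1\ge s(a_{1}+1)$ for every $s\in[0,1]$, whose worst case $s=1$ is exactly the assumption $a_{1}\le(a_{0}-2)/2$.

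The argument for \eref{eq-var} follows the same scheme: $\psi^{2}(1)=0$ makes the contribution of $\{f>0,f^{0}=0\}$ vanish, the hypothesis bounds the main term, and a short algebraic reduction shows that $a_{2}^{2}\ge 1$ (part of the hypothesis) suffices. The main conceptual obstacle is to recognise that $\int\psi(\sqrt{q'/q})\,dR$ genuinely depends on the arbitrary choice of versions of $q,q'$ on the $\mu$-null set $\{f=0\}$; the only uniform way to handle this across all representations is the crude bound $|\psi|\le 1$ there, and the strengthened hypothesis $a_{0}>2$ is tailored precisely so that the residual mass $R(\{f=0\})\le 1-s^{2}$ can be absorbed into the target bound.
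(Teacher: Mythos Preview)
Your argument is correct and essentially complete. The only point that deserves a word of caution is the claim that on $\{f>0,f^{0}=0\}$ one has $q=q'=0$: this is a priori only $\mu$-a.e., but since $\mu$ and $\nu$ are equivalent on $\{f>0\}$ and $R\ll\nu$, it does hold $R$-a.e., which is what the integral requires. With that clarified, the algebra goes through exactly as you wrote, and the condition $a_{1}\le(a_{0}-2)/2$ is used in precisely the right place.

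Your route differs from the paper's. The paper factors the proof into two pieces: first a lemma (Lemma~\ref{lem-ext}) showing that for a \emph{fixed} representation one can pass from $\gR\ll\gmu$ to arbitrary $\gR$ via the Lebesgue decomposition $R=\delta^{2}R'+(1-\delta^{2})R''$ with $R'\ll\mu$, $R''\perp\mu$; then a transfer between representations using a privileged dominating probability $\gnu$ (one for which $\nu_{i}(A)=0$ iff $Q_{i}(A)=0$ for all $Q_{i}$), exploiting that on the support of $\gnu$ the ratio $q'_{i}/q_{i}$ is intrinsic. Your argument merges these two steps: you dominate the reference $\mu_{0}$, the arbitrary $\mu$ and $R$ simultaneously, split $R$ according to the \emph{joint} support $\{f>0,f^{0}>0\}$, and absorb all the residual mass in one stroke. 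This is more direct and avoids invoking the existence of a privileged measure, at the cost of redoing the Hellinger bookkeeping in a slightly more involved configuration. The paper's modular version has the advantage that Lemma~\ref{lem-ext} is a standalone statement, reusable whenever one wants to relax $\gR\ll\gmu$; your version has the advantage of being a single self-contained computation.
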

This proposition means that, up to a possible adjustment of the constants $a_{0}$ and $a_{1}$, it is actually enough to check that~\eref{eq-esp} and~\eref{eq-var} hold true for a given representation $(\gmu,{\cbQ})$ of ${\sbQ}$ and all probabilities $\gR\ll \gmu$.

Let us now introduce two functions $\psi$ which do satisfy Assumption~\ref{H-debase}. 
%
\begin{prop}\label{prop-expsi}
Let $\psi_{1}$ and $\psi_{2}$ be the functions taking the value 1 at $+\infty$ and defined for $x\in\R_{+}$ by
\[
\psi_{1}(x)={x-1\over \sqrt{x^{2}+1}}\qquad\mbox{and}\qquad\psi_{2}(x)={x-1\over x+1}.
\]
These two functions are continuously increasing from $[0,+\infty]$ to $[-1,1]$, Lipschitz (with respective Lipschitz constants 1.143 and 2) and satisfy Assumption~\ref{H-debase} for all $\rho$-models ${\sbQ}$ with $a_{0}=4.97$, $a_{1}=0.083$, $a_{2}^{2}=3+2\sqrt{2}$ for $\psi_{1}$ and $a_{0}=4$, $a_{1}=3/8$, $a_{2}^{2}=3\sqrt{2}$ for $\psi_{2}$.
\end{prop}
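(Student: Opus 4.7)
I would first verify part~i) of Assumption~\ref{H-debase}, which is purely computational. Differentiation gives $\psi_1'(x) = (x+1)/(x^2+1)^{3/2}$ and $\psi_2'(x) = 2/(x+1)^2$, both positive on $[0,+\infty)$, so each $\psi_j$ is continuous and strictly increasing from $\psi_j(0)=-1$ to $\psi_j(+\infty)=1$. The symmetry $\psi_j(x) = -\psi_j(1/x)$ is a one-line algebraic check: for $\psi_2$, $(1/x-1)/(1/x+1) = (1-x)/(1+x) = -\psi_2(x)$, and for $\psi_1$ after clearing denominators by $x$. I then invoke Proposition~\ref{prop-universel}, which reduces the verification of \eref{eq-esp} and \eref{eq-var} to a single representation $(\gmu,\cbQ)$ and probabilities $\gR\ll\gmu$. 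Its hypotheses hold for the stated constants: for $\psi_1$, $(a_0-2)/2 \approx 1.49 \ge a_1 = 0.083$ and $6a_1 \approx 0.50 \le a_2^2 = 3+2\sqrt 2$; for $\psi_2$, $(a_0-2)/2 = 1 \ge a_1 = 3/8$ and $6a_1 = 9/4 \le a_2^2 = 3\sqrt 2$.

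Working at one index $i$ and writing $u = \sqrt{dQ_i/d\mu_i}$, $v = \sqrt{dQ_i'/d\mu_i}$, $w = \sqrt{dR_i/d\mu_i}$, direct computation yields
\[
\psi_1\!\pa{\sqrt{q_i'/q_i}} = \frac{v-u}{\sqrt{u^2+v^2}},\qquad \psi_2\!\pa{\sqrt{q_i'/q_i}} = \frac{v-u}{v+u}.
\]
For the variance bound \eref{eq-var}, I establish the pointwise inequalities
\[
\frac{(v-u)^2 w^2}{u^2+v^2} \le (w-u)^2 + (w-v)^2,\qquad \frac{(v-u)^2 w^2}{(v+u)^2} \le (w-u)^2 + (w-v)^2.
\]
Setting $s = u+v$, $d = v-u$, each reduces after cross-multiplication to a non-negative quadratic in $w$: the first is the perfect square $(2sw - (s^2+d^2))^2 \ge 0$; the second has discriminant $8 s^2 d^2 (d^2 - s^2) \le 0$ (since $d \le s$ for $u,v \ge 0$) and positive leading coefficient $4s^2 - 2d^2$. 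Integrating against $\mu_i$ gives $\int \psi_j^2(\sqrt{q_i'/q_i})\,dR_i \le 2[h^2(R_i,Q_i) + h^2(R_i,Q_i')]$, which is dominated by $a_2^2[h^2(R_i,Q_i) + h^2(R_i,Q_i')]$ for both stated values of $a_2^2$.

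The expectation inequality \eref{eq-esp} is the heart of the matter. The key observation is $\int (v^2 - u^2)\,d\mu_i = 0$ (since $Q_i, Q_i'$ are probability measures), which permits the recentering
\[
\int \frac{(v-u)\,w^2}{v+u}\,d\mu_i = \int \frac{(v-u)\bigl(w - (v+u)/2\bigr)\bigl(w + (v+u)/2\bigr)}{v+u}\,d\mu_i
\]
for $\psi_2$. Applying Cauchy--Schwarz with a tunable weight $\alpha > 0$ to separate $(v-u)$ from $w - (v+u)/2$, then combining with the identity $\int \bigl(w-(v+u)/2\bigr)^2\,d\mu_i = h^2(R_i,Q_i)+h^2(R_i,Q_i')-h^2(Q_i,Q_i')/2$, the triangle inequality $h^2(Q_i,Q_i') \le 2[h^2(R_i,Q_i)+h^2(R_i,Q_i')]$, and the variance bound already proved, one obtains \eref{eq-esp} with $a_0=4$, $a_1=3/8$ after optimizing over $\alpha$. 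The analogous argument for $\psi_1$ uses $\sqrt{u^2+v^2}$ in place of $v+u$ together with the auxiliary inequalities $|v-u| \le \sqrt{2(u^2+v^2)}$ and $(v+u)^2 \le 2(u^2+v^2)$, and yields the slightly weaker constants $a_0 = 4.97$, $a_1 = 0.083$. The main obstacle is tracking the numerical constants through the Cauchy--Schwarz step and the choice of $\alpha$; this is more delicate for $\psi_1$ because $\sqrt{u^2+v^2}$ does not admit the clean linear identity $\int(v^2-u^2)\,d\mu_i = 0$ on which the recentering trick for $\psi_2$ relies.
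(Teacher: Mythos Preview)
Your treatment of part~i) and the variance bound \eref{eq-var} is correct and in fact sharper than the paper's: your pointwise inequalities give constant $2$ in place of the paper's $3+2\sqrt{2}$ and $3\sqrt{2}$, and the reductions to perfect-square and nonpositive-discriminant quadratics are clean. This is a genuine simplification over the paper, which expands around an auxiliary density and carries an extra parameter.

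For the expectation bound \eref{eq-esp}, your recentering identity for $\psi_2$ is the right idea and is close in spirit to the paper's approach (the paper expands $w^2$ around $(\sqrt{q}+\sqrt{q'})/\delta$ rather than $(u+v)/2$). But the sketch that follows has two concrete problems. First, the decisive step is not Cauchy--Schwarz separating $(v-u)$ from $w-(u+v)/2$: it is the exact expansion $w^{2}=(w-(u+v)/2)^{2}+(u+v)(w-(u+v)/2)+(u+v)^{2}/4$, whose middle term integrates to $\rho(R,Q')-\rho(R,Q)=h^{2}(R,Q)-h^{2}(R,Q')$ and is the only source of a strictly negative coefficient on $h^{2}(R,Q')$. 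A Cauchy--Schwarz or Young bound applied as you describe produces only positive terms and cannot generate $-a_{1}h^{2}(R,Q')$. Second, the triangle inequality you invoke, $h^{2}(Q,Q')\le 2[h^{2}(R,Q)+h^{2}(R,Q')]$, goes the wrong way: after bounding the quadratic piece by your identity one has a term $-\tfrac{1}{2}h^{2}(Q,Q')$, and to turn this into $-a_{1}h^{2}(R,Q')$ one needs the \emph{reverse} triangle inequality $h(Q,Q')\ge |h(R,Q)-h(R,Q')|$ followed by Young (this is exactly what the paper does, with $\alpha=4$). Your stated direction would erase the negative term entirely. For $\psi_{1}$ the paper's argument is more delicate still (auxiliary density $r=(q+q')/2$ and the elementary bound \eref{eq-rac} on $\sqrt{(a+b)/2}-(\sqrt a+\sqrt b)/2$), and ``analogous argument'' does not cover it.
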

Both functions can therefore be used everywhere in the applications of the present paper. Nevertheless, we prefer $\psi_{2}$ because it leads to better constants in the risk bounds of the estimator. Proposition~\ref{prop-expsi} is proved in Appendix~\ref{pprop-expsi}. Some comments on  Assumption~\ref{H-debase} can be found in Appendix~\ref{Hpsi1}. When the $\rho$-model reduces to two elements, our selection procedure can be interpreted as a robust test between two simple hypotheses. Upper bounds on the errors of the first and second kinds are established in Appendix~\ref{Hpsi2}.

\section{The performance of $\rho$-estimators on $\rho$-models}\label{MRI}

\subsection{The $\rho$-dimension function}\label{MRIa}
The deviation $\gh(\gP,\widehat \gP)$ between the true distribution $\gP$ and a $\rho$-estimator
$\widehat \gP$ built on the $\rho$-model ${\sbQ}$ is controlled by two terms which are the analogue of the classical bias and variance terms and we shall first introduce a function that replaces here the variance. 

Let $y>0$, $\gP,\overline \gP\in\sbP$ and $\sbP_{0}$ be an arbitrary subset of $\sbP$, we define
\[
\sbB^{\sbP_{0}}(\gP,\overline \gP,y)=\ac{\gQ\in\sbP_{0}\,\left|\,\gh^{2}(\gP,\overline\gP)+\gh^{2}(\gP,\gQ) < y^{2}\right.}
\]
and for measurable non-negative functions $\gq,\gq'$ on $(\sbX,\sbB)$, we set
\begin{equation}
\gZ(\bsX,\gq,\gq')=\gT(\bsX,\gq,\gq')-\E\cro{\gT(\bsX,\gq,\gq')}.
\label{eq-gZ}
\end{equation}

Given a representation $\cR=(\gmu, \cbQ)$ of $\sbQ\cup\{\overline \gP\}$, we define 
\begin{equation}
w(\cR,\sbQ,\gP,\overline \gP,y)=\E\cro{\sup_{\gQ\in\sbB^{\sbQ}(\gP,\overline \gP,y)}\ab{\st\gZ(\bsX,\overline{\gp},\gq)}}
\label{eq-w1}
\end{equation}
where, for $\gQ\in \sbB^{\sbQ}(\gP,\overline \gP,y)\subset \sbQ$, $\gq$ denotes the (unique) element of $\cbQ$ such as $\gQ=\gq\cdot \gmu$ and $\overline \gp$ denotes the element of $\cbQ$ such that $\overline \gP=\overline \gp\cdot \gmu$. We recall that we use the convention $\sup_{\varnothing}=0$. Since $\sbQ$ is countable, so is $\sbB^{\sbQ}(\gP,\overline \gP,y)\subset \sbQ$. Therefore the supremum of $\ab{\gZ(\bsX,\overline \gp,\cdot)\st}$ over $\sbB^{\sbQ}(\gP,\overline \gP,y)$ is measurable and the right-hand side of~\eref{eq-w1} is well-defined. Also note that, since $\gT(\bsX,\overline \gp,\overline \gp)=n\psi(1)=0$,
\[
\E\cro{\sup_{\gQ\in\sbB^{\sbQ}(\gP,\overline \gP,y)}\ab{\st\gZ(\bsX,\overline{\gp},\gq)}}=\E\cro{\sup_{\gQ\in\sbB^{\sbQ\cup\{\overline \gP\}}(\gP,\overline \gP,y)}\ab{\st\gZ(\bsX,\overline{\gp},\gq)}}.
\]
Hence $w(\cR,\sbQ,\gP,\overline \gP,y)=w(\cR,\sbQ\cup\{\overline \gP\},\gP,\overline \gP,y)$. 
%
\begin{df}[$\rho$-dimension function]\label{def-dimfunc}
Let $\sbQ$ be a $\rho$-model and $\psi$ some function satisfying  Assumption~\ref{H-debase} with constants $a_{0},a_{1}$ and $a_{2}$. The {\em $\rho$-dimen\-sion function} $D^{{\sbQ}}$ of $\sbQ $ is the mapping from $\sbP\times\sbP$ to $[1,+\infty)$ given by 
\begin{equation}
D^{{\sbQ}}(\gP,\overline \gP)=\cro{\beta^{2}\sup\ac{y^{2}\,\left|\,\gw^{{\sbQ}}\left(\gP,\overline \gP,y\right)>{a_{1}y^{2}\over 8}\right.}}\bigvee 1
\label{def-dim1}
\end{equation}
with $\beta={a_{1}/(4a_{2})}$ and
\[
\gw^{\sbQ}(\gP,\overline \gP,y)=\inf_{\cR}w(\cR,\sbQ,\gP,\overline \gP,y)\quad\mbox{for all }y>0,
\]
where the infimum runs over all the representations $\cR=(\gmu,\cbQ)$ of $\sbQ\cup \{\overline \gP\}$. 
\end{df}
Note that the $\rho$-dimension function of $\sbQ$ depends on the choice of the function $\psi$ and not on the choice of the representations of $\sbQ\cup\{\overline \gP\}$. 
Since it measures the local fluctuations of the centred empirical process $\gZ(\bsX,\overline \gp,\gq)$ indexed by $\gq\in{\cbQ}$, it is quite similar to the local Rademacher complexity introduced in Koltchinskii~\citeyearpar{MR2329442} for the purpose of studying empirical risk minimization. Its importance comes from the following property.
%
\begin{prop}\label{prop-gw}
Let $\sbQ$ be a $\rho$-model and $\cR=(\gmu,\cbQ)$ an arbitrary representation of $ \sbQ\cup\{\overline \gP\}$. Whatever $\gP,\overline\gP\in\sbP$,
\begin{equation}
w(\cR,\sbQ,\gP,\overline \gP,y)\le\gw^{\sbQ}(\gP,\overline \gP,y)+8\gh^{2}(\gP,\overline \gP)\;\mbox{ for all }y>0,
\label{eq-w/w}
\end{equation}
hence,  for all $y>\beta^{-1}\sqrt{D^{{\sbQ}}(\gP,\overline \gP)}$
\begin{equation}
w(\cR,\sbQ,\gP,\overline \gP,y)\le \left(a_{1}y^{2}/8\right)+8\gh^{2}(\gP,\overline \gP).
\label{eq-w/D}
\end{equation}
\end{prop}
The proof is provided in Section~\ref{sect-pprop-gw} of the Appendix.

\subsection{Exponential deviation bounds}\label{MRIb}
Our first theorem, to be proven in Section~\ref{sect-main1}, deals with the situation of a null penalty function $\gpen={\bf 0}$.
%
\begin{thm}\label{thm-main1}
Let $\gP$ be an arbitrary distribution in $\sbP$, ${\sbQ}$ a $\rho$-model and $\psi$ a function satisfying Assumption~\ref{H-debase}. Whatever the representation $\cR$ of $\sbQ$, a $\rho$-estimator $\widehat \gP$ relative to $(\cR,{\bf 0})$ as defined in Section~\ref{ES1} satisfies, for all $\overline{\gP}\in{\sbQ}$ and $\xi>0$,
\begin{equation}
\P\left[\gh^{2}(\gP,\widehat \gP)\le\gamma\gh^{2}(\gP,\overline \gP)+{4\kappa\over a_{1}}\pa{\frac{D^{{\sbQ}}(\gP,\overline\gP)}{4.7}+1.49+\xi}\right]\ge1-e^{-\xi},
\label{eq-risk1}
\end{equation}
with
\begin{equation}
\gamma=\frac{4(a_{0}+8)}{a_{1}}+2+\frac{84}{a_{2}^{2}}\quad\mbox{and}\quad
\kappa=\frac{35a_{2}^{2}}{a_{1}}+74,\quad\mbox{hence}\quad\frac{\kappa}{25}\ge11.36.
\label{cond-kappa}
\end{equation}
In particular, if the $\rho$-dimension function $D^{\sbQ}$ is bounded on $\sbP\times \sbQ$ by $D_{n}\ge 1$, then
\begin{equation}
\P\cro{C\gh^{2}(\gP,\widehat \gP)\le \gh^{2}(\gP,\sbQ)+D_{n}+\xi}\ge 1-e^{-\xi},\quad \mbox{for all $\xi>0$}
\label{eq-risk1b}
\end{equation}
and for some constant $C>0$ which only depends on the choice of $\psi$.
\end{thm}
None of the quantities involved in \eref{eq-risk1} depends on the chosen representation $\cR$ of $\sbQ$, which means that the performance of $\widehat \gP$ does not depend on $\cR$ although its construction depends on it. We shall therefore (abusively) refer to $\widehat \gP$ as {\em a $\rho$-estimator on $\sbQ$} omitting to mention what representation is used for its construction.

Introducing a non-trivial penalty function allows one to favour some probabilities as compared to others in $\sbQ$ and gives thus a Bayesian flavour to our estimation procedure. We shall mainly use it when we have at our disposal not only one single $\rho$-model for $\gP$ but rather a countable collection $\{{\sbQ}_{m},\ m\in \cM\}$ of candidate ones, in which case ${\sbQ}=\bigcup_{m\in\cM}{\sbQ}_{m}$ is still a $\rho$-model that we call the {\em reference $\rho$-model}. The penalty function may not only be used for estimating $\gP$ but also for performing model selection among the family $\{{\sbQ}_{m},\ m\in \cM\}$ by deciding that the procedure selects the $\rho$-model ${\sbQ}_{\widehat m}$ if the resulting estimator $\widehat \gP$ belongs to ${\sbQ}_{\widehat m}$. Since $\widehat \gP$ may belong to several $\rho$-models, this selection procedure may result in a (random) set of possible $\rho$-models for $\gP$ and a common way of selecting one is to choose that with the smallest {\it complexity} in a suitable sense. In the present paper,  the complexity of a $\rho$-model ${\sbQ}_{m}$ will be measured by means of a non-negative  weight function $\Delta$ mapping $\cM$ into $\R_{+}$ and which satisfies
\begin{equation}
\sum_{m\in\cM}e^{-\Delta(m)}\le 1,
\label{eq-delta}
\end{equation}
where the number ``1" is chosen for convenience. When equality holds in~\eref{eq-delta}, $e^{-\Delta(\cdot)}$ can be viewed as a prior distribution on the family of $\rho$-models $\{{\sbQ}_{m},\ m\in \cM\}$.

In such a context, we shall describe how our penalty term should depend on this weight function $\Delta$ in view of selecting a suitable $\rho$-model for $\gP$. The next theorem is proved in Section~\ref{sect-main4}.
%
\begin{thm}\label{thm-main4}
Let $\gP$ be an arbitrary distribution in $\sbP$, $\{{\sbQ}_{m},\ m\in\cM\}$ be a countable collection of $\rho$-models, $\Delta$ a weight function satisfying~\eref{eq-delta}, $\cR(\sbQ)$ a representation of $\sbQ=\bigcup_{m\in\cM}\sbQ_{m}$, $\psi$ a function satisfying Assumption~\ref{H-debase} and $\kappa$ be given by \eref{cond-kappa}. Assume that there exists a mapping $D_{n}:\cM\to \R_{+}$ and a number $K\ge 0$ such that, whatever $m\in\cM$,
\begin{equation}
D^{{\sbQ}_{m}}(\gP,\overline \gP)\le D_{n}(m)+ KD_{n}(m') \quad\mbox{for all }\,(\gP,\overline{\gP})\in\sbP\times {\sbQ}_{m'}.
\label{eq-L4}
\end{equation}
Let the penalty function satisfies, for some constant $\kappa_{1}\in\R$,
\begin{equation}
\gpen(\gQ)=\kappa_{1}+\kappa\inf_{\{m\in\cM\,|\,{\sbQ}_{m}\ni\gQ\}}\left[\frac{D_{n}(m)}{4.7}+\Delta(m)\right]\quad\mbox{for all }\gQ\in{\sbQ}.
\label{eq-L3}
\end{equation}
Then any $\rho$-estimator $\widehat \gP$ relative to $(\cR(\sbQ),\gpen)$ satisfies, for all $\xi>0$ with probability at least $1-e^{-\xi}$ and with $\gamma$ given by (\ref{cond-kappa}),
\begin{align}
\gh^{2}(\gP,\widehat \gP)\le  &\inf_{m\in\cM}\cro{\gamma\gh^{2}\left(\gP,{\sbQ}_{m}\right) +{4\kappa\over a_{1}}\pa{{K+1\over 4.7}D_{n}(m)+\Delta(m)}}
\label{eq-Th2-Prob}\\
& +{4\kappa\over a_{1}}(1.49+\xi),\nonumber 
\end{align}
\end{thm}

\subsection{The case of density estimation}\label{Dens}
Of special interest is the situation where the $X_{i}$ are {\em assumed} to be i.i.d.\ with values in a measurable set $(\sX,\sB)$ in which case $\sbX=\sX^{n}$, $\sbB=\sB^{\otimes n}$, $\sP$ and $\sM$ denote respectively the set of all probability distributions and all positive $\sigma$-finite measures on $(\sX,\sB)$ and $\gP$ is {\em expected} (although this is not necessarily true) to belong to $\sP^{n}=\left\{P^{\otimes n},\;P\in\sP\right\}$. Note that the Hellinger distance $h(\cdot,\cdot)$ on $\sP$ is related to the Hellinger-type distance $\gh(\cdot,\cdot)$ on $\sP^{n}$ in the following way: 
\[
\gh^{2}(\gQ,\gQ')=nh^{2}(Q,Q')\;\;\mbox { for all }\;Q,Q'\in\sP\,\mbox{ with }\,\gQ=Q^{\otimes n},\;\gQ'=(Q')^{\otimes n}.
\] 
If $\gP=P^{\otimes n}\in\sP^{n}$, estimating $\gP$ then amounts to estimating the marginal distribution $P$ and we model the probability $P$ rather than $\gP$. 
%
\begin{df}\label{def-densityrhomodel}
We call {\em density $\rho$-model} any countable subset $\sQ$ of $\sP$.
\end{df}
Given such a density $\rho$-model ${\sQ}$ for $P$ with representation $(\mu, \cQ)$ (which implies that the mapping $q\mapsto Q=q\cdot\mu$ is one to one), the corresponding $\rho$-model for $\gP$ is simply ${\sbQ}=\left\{\gQ=Q^{\otimes n},\ Q\in {\sQ}\right\}$ with representation $(\gmu,\cbQ)$, $\gmu=\mu^{\otimes n}$ and $\cbQ=\{\gq:(x_{1},\ldots,x_{n})\mapsto \left(q(x_{1})\ldots q(x_{n})\st\right),\; q\in \cQ\}$. In this case, for simplicity, we write $\gT(\bsX,q,q')$ and $\gup(\bsX,q)$ for $\gT(\bsX,\gq,\gq')$ and $\gup(\bsX,\gq)$ respectively. Examples involving density estimation will be considered in Sections~\ref{S2c}, \ref{EX}, \ref{CD} and \ref{RS} below.

We may also work with several density $\rho$-models $\{{\sQ}_{m},\; m\in\cM\}$ for $P$ simultaneously, in which case ${\sQ}=\bigcup_{m\in\cM}{\sQ}_{m}$ is also a density $\rho$-model. A penalty function $\pen$ on $\sQ$ leads to a penalty function $\gpen$ on $\sbQ=\bigcup_{m\in\cM}\sbQ_{m}$ defined by $\gpen(\gQ)=\gpen(Q^{\otimes n})=\pen(Q)$ for all $Q\in\sQ$. Any $\rho$-estimator $\widehat \gP$ relative to $((\gmu,\cbQ),\gpen)$ is of the form $\widehat \gP=\widehat P^{\otimes n}$ with $\widehat P\in{\rm Cl}(\sQ)$ and $\widehat P$ will be called a {\em density $\rho$-estimator} for $P$ relative to $((\mu,\cQ),\pen)$. 

We deduce that, under the assumptions of Theorem~\ref{thm-main1}, if $\gP$ is truly of the form $\gP=P^{\otimes n}$, for all $\overline P\in \sQ$,
\[
\P\left[Ch^{2}(P,\widehat P)\le h^{2}(P,\overline P)+\frac{D^{{\sbQ}}\left(\gP,\overline P^{\otimes n}\right)}{n}+{\xi\over n}\right]\ge1-e^{-\xi},\quad \mbox{for all $\xi>0$}.
\]
Under the assumptions of Theorem~\ref{thm-main4}, for all $\xi>0$ and a positive constant $C$ depending only on $\psi$,
\[
\P\cro{Ch^{2}(P,\widehat P)\le \inf_{m\in\cM}\cro{h^{2}\left(P,{\sQ}_{m}\right)+{D_{n}(m)+\Delta(m)\over n}}+{\xi\over n}}\ge 1-e^{-\xi}.
\]

\section{From $\rho$-models to uncountable statistical models}\label{sect-ModGene}
The previous results apply to statistical models $\sbQ$ that are countable, which is not the common case in statistics. The aim of this section is to explain how our general theory on $\rho$-models can be used to solve estimation problems on models that are possibly uncountable. Hereafter we shall denote by $\overline \sbQ$ a {\em general statistical model}, i.e.\ an arbitrary subset of $\sbP$.

\subsection{Working with nets}

Let us first recall this classical definition.
%
\begin{df}\label{def-net}
Given $\eta\ge 0$, a subset $\sbQ$ of $\overline \sbQ$ such that $\gh(\gQ,\sbQ)\le \eta$ for all $\gQ\in\overline \sbQ$ is called an $\eta$-net of $\overline \sbQ$. The case $\eta=0$ corresponds to the situation where $\sbQ$ is $\sbV$-dense in $\overline \sbQ$.
\end{df}
If there exists a countable $\eta$-net $\sbQ$ for $\overline \sbQ$, it is a $\rho$-model. If its $\rho$-dimension $D^{\sbQ}$ is bounded by $D_{n}=D_{n}(\eta)\ge 1$ on $\sbP\times \sbQ$, we deduce from Theorem~\ref{thm-main1} and the inequality $\gh(\gP,\sbQ)\le \gh(\gP,\overline \sbQ)+\eta$ that any $\rho$-estimator on $\sbQ$ satisfies
\begin{equation}
\P\cro{C\gh^{2}(\gP,\widehat \gP)\le \gh^{2}(\gP,\overline \sbQ)+D_{n}(\eta)+\eta^{2}+\xi}\ge 1-e^{-\xi}\quad\mbox{for all }\xi>0
\label{eq-FRM-GM0}
\end{equation}
hence, 
\begin{equation}
\E\cro{\gh^{2}(\gP,\widehat \gP)}\le C'\left[\gh^{2}(\gP,\overline \sbQ)+D_{n}(\eta)+\eta^{2}\right]
\label{eq-FRM-GM}
\end{equation}
for some constants $C,C'>0$ depending on $\psi$ only.  Most of the statistical models $\overline \sbQ$ that are used in statistics possess $\eta$-nets for all values of $\eta\ge 0$. Since the $\rho$-dimension function $D^{\sbQ}$ can only increase with inclusion, choosing for each $\eta\ge 0$ an $\eta$-net with the smallest possible cardinality and then the value $\eta\et$ of $\eta$ that minimizes $D_{n}(\eta)+\eta^{2}$ leads to a $\rho$-estimator $\widehat \gP$ with the smallest possible risk bound in \eref{eq-FRM-GM}. This risk bound turns out to be minimax (up to possible extra logarithmic factors) in all cases we know --- see Section~\ref{DM1} ---.
 
\subsection{Models that are universally separable}\label{sect-UnivSep}
Following Pollard~\citeyearpar{MR762984}, we shall say that a class of densities $\overline  \cbQ\subset \cbL(\gmu)$ is {\em universally separable} if one can find a countable subset $\cbQ\subset \overline \cbQ$ such that, for each $\gq\in\overline \cbQ$, there exists a sequence $\left(\gq^{(j)}\right)_{j\ge1}$ in $\cbQ$ which converges toward $\gq$ pointwise, that is, 
\begin{equation}
q_{i}^{(j)}(x)\dfleche{j\rightarrow+\infty}q_{i}(x)\quad\mbox{for }1\le i\le n\mbox{ and all }
x\in\sX_{i}.
\label{eq-pCV}
\end{equation}
We shall then say that $\cbQ$ is {\em $\sbT$-dense} in $\overline  \cbQ$. Note that if $\left(\gq^{(j)}\right)_{j\ge1}$ converges toward $\gq$ pointwise, by Scheff\'e's Lemma, the sequence of probabilities $\gQ_{j}=\gq^{(j)}\cdot \gmu$ converges in total variation, hence in Hellinger distance, toward $\gQ=\gq\cdot \gmu$. This implies that if $\cbQ$ is $\sbT$-dense in $\overline \cbQ$, the set of probabilities $\sbQ=\{\gq\cdot \gmu,\;\gq\in \cbQ\}$ is $\sbV$-dense in $\overline \sbQ=\{\gq\cdot \gmu,\;\gq\in \overline \cbQ\}$.

We shall work here within the following framework.
For some $\gmu\in\sbM$, let $\{\overline \cbQ_{m},\;m\in\cM\}$ be a countable family of universally separable subsets of $\cbL(\gmu)$ with $\cbQ_{m}\subset \overline \cbQ_{m}$ a countable and $\sbT$-dense subset of $\overline \cbQ_{m}$. We set $\overline \cbQ=\bigcup_{m\in\cM}\overline \cbQ_{m}$, $\cbQ=\bigcup_{m\in\cM}\cbQ_{m}$, $\overline \sbQ_{m}=\{\gq\cdot \gmu,\;\gq\in \overline \cbQ_{m}\}$ for all $m\in\cM$,  $\overline \sbQ=\{\gq\cdot \gmu,\;\gq\in \overline \cbQ\}$ and $\sbQ=\{\gq\cdot \gmu,\;\gq\in \cbQ\}$. Note that $\sbQ$ is a $\rho$-model since $\cbQ$ is countable and that $\sbQ$ is $\sbV$-dense in $\overline \sbQ$ since $\cbQ$ is $\sbT$-dense in $\overline \cbQ$. Let now $\overline \gpen$ be some penalty function on $\overline \sbQ$ with the following property. 
%
\begin{ass}\label{Hypo-pen}
There exists a function $p:\cM\to \R$ such that
\begin{equation}
\overline{\gpen}(\gQ)=\inf_{m\in \cM, \overline \sbQ_{m}\ni \gQ}p(m)\quad \mbox{for all}\quad \gQ\in\overline \sbQ
\label{eq-Hypo-pen}
\end{equation}
and for any $\gQ\in\overline \sbQ$, there exists some $m_{\gQ}\in\cM$ such that $\gQ\in\overline \sbQ_{m_{\gQ}}$ and $\overline{\gpen}(\gQ)=p(m_{\gQ})$.
\end{ass}
Note that this assumption holds in particular in the case of a single model with $\overline{\gpen}={\bf 0}$. Within this framework, we can prove the following result.
%
\begin{thm}\label{thm-Exten}
Let $\{\overline \cbQ_{m},; m\in\cM\}$ be a countable family of universally separable subsets of $\cbL(\gmu)$ and $\overline \gpen$ a penalty function on $\overline \sbQ$ that satisfies Assumption~\ref{Hypo-pen}. Any $\rho$-estimator $\widehat{\gP}$ on $\overline \sbQ$ relative to $((\gmu, \overline\cbQ),\overline \gpen)$ is also a $\rho$-estimator on the $\rho$-model $\sbQ$ relative to $((\gmu,\cbQ),\gpen)$ where $\gpen$ is the restriction of $\overline \gpen$ to $\sbQ$.
\end{thm}
The proof is postponed to Section~\ref{sect-pthm-Exten} of the Appendix. 

This result says that, provided that the penalty function satisfies \eref{eq-Hypo-pen}, which is consistent with \eref{eq-L3}, the construction of a $\rho$-estimator on the possibly uncountable set  $\overline \sbQ$ with representation $(\gmu, \overline\cbQ)$ actually results in a $\rho$-estimator based on the $\rho$-model  $\sbQ$. 

As soon as we can control the $\rho$-dimension function of $\sbQ$ by the some features of $\overline \sbQ$, in the case of a single model, or the $\rho$-dimension functions of the $\rho$-models $\sbQ_{m}=\{\gq\cdot \gmu,\;\gq\in\cbQ_{m}\}$ by the features of the models $\overline \sbQ_{m}$, in the general case, we are able to bound the risk of the $\rho$-estimator relative to $\left((\gmu,\overline \cbQ),\overline \gpen\right)$ using the results of Theorems~\ref{thm-main1} and~\ref{thm-main4}.

For illustration, let us mention a few examples of density sets that are universally separable: 
\begin{lista}
\item the set $\overline \cH_{D}$ of right-continuous histograms on $\R$ with at most $D\ge 1$ pieces;
\item for $L>0$ and $\alpha=r+\beta$ with $r\in\N$, $\beta\in (0,1]$, the set $\overline\cH_{\alpha}(L)$ of functions $f$ on $[0,1]$ that are $r$-times differentiable and satisfy 
\[
\ab{f^{(r)}(x)-f^{(r)}(y)}\le L\ab{x-y}^{\beta}\quad \mbox{for all }x,y\in [0,1];
\]
\item the set $\overline\cH_{\downarrow}$ of non-increasing and right-continuous densities on $(0,+\infty)$.
\end{lista}
The set $\overline\cH_{\alpha}(L)$ is universally separable because the larger set consisting of continuous functions on $[0,1]$ is separable for the topology induced by the norm of the uniform convergence, hence all its subsets are separable with respect to this topology which implies pointwise convergence. We prove that the sets $\overline\cH_{D}$ and $\overline\cH_{\downarrow}$ are universally separable in Section~\ref{Sect-UnivSep} of the Appendix. We shall see in Section~\ref{EX} that the MLE on the convex density sets $\overline\cH_{\alpha}(L)$ and $\overline\cH_{\downarrow}$ is actually a $\rho$-estimator.
 
\section{Why is a $\rho$-estimator robust?}\label{S2c}
The aim of this section is to analyse the robustness properties of $\rho$-estimators. 
For the sake of simplicity, we shall restrict ourselves to the particular case of density estimation as described in Section~\ref{Dens}. 

\subsection{Misspecification and contamination}\label{Mis&cont}
We assume here that we work with a single $\rho$-model ${\sbQ}$ (so that Theorem~\ref{thm-main1} applies) for which $D^{{\sbQ}}(\gP,\overline \gP)$ is bounded from above independently of $\gP\in \sbP$ and $\overline \gP\in {\sbQ}$ by some some number $D_{n}({\sQ})\ge 1$ depending on the marginal model ${\sQ}$ and the number $n$ of marginals. Examples of such situations will be provided in Section~\ref{sect-DM}. 

When $\gP=P^{\otimes n}$, that is when the data are truely i.i.d.\ with marginal distribution $P$, \eref{eq-risk1} becomes
\begin{equation}
\P\left[Ch^{2}(P,\widehat P)\le h^{2}\!\left(P,\sQ\right)+n^{-1}\!\left[D_{n}(\sQ)+\xi\right]\right]\ge1-e^{-\xi}\quad \forall\xi>0,
\label{eq-RRHOTYP}
\end{equation}
where $C$ is a positive constant only depending on $\psi$. 

The bias term in~\eref{eq-RRHOTYP}, namely $h^{2}\!\left(P,\sQ\right)$, accounts for the robustness property of the $\rho$-estimator with respect to the Hellinger distance and measures the additional loss we get as compared to the case when $P$ belongs to $\sQ$. If this quantity is small, the performance of the $\rho$-estimator will not deteriorate too much as compared to the ideal situation where $P$ does belong to $\sQ$. In fact, if there exists some probability $\overline P\in\sQ$ such that $h^{2}\!\left(P,\sQ\right)=h^{2}(P,\overline P)$ is small as compared to $D_{n}(\sQ)/n$, everything is almost as if the $\rho$-estimator $\widehat P$ were built from an i.i.d.\ sample with distribution $\overline{P}$. The $\rho$-estimators under $P$ and $\overline P$ would therefore look the same. This includes the following situations: 

\paragraph{Misspecification}
The true distribution $P$ of the observations does not belong to $\sQ$ but is close to $\sQ$. For example, let $\sQ$ be countable and $\sbV$-dense in the set of all Gaussian distributions on $\R^{k}$ with identity covariance matrix and mean vector belonging to a linear subspace $\overline S\subset\R^{k}$. Assume that the true distribution $P$ has the same form except for the fact that its mean does not belong to $\overline S$ but is at Euclidean distance $\eps>0$ from $\overline S$. Then, it follows from classical formulas that
\[
h^{2}\!\left(P,\sQ\right)=1-e^{-\eps^{2}/8}\le\eps^{2}/8.
\]

\paragraph{Contamination}
The true distribution $P$ is of the form $(1-\eps)\overline P+\eps R$ with $\overline P\in\sQ$ and $R\ne\overline P$ but otherwise arbitrary. This situation arises when a proportion $\eps\in (0,1)$ of the sample $X_{1},\ldots,X_{n}$ is contaminated by another sample. If follows from the convexity property of the Hellinger distance that
\[
h^{2}\!\left(P,\sQ\right)\le h^{2}(P,\overline P)\le \eps h^{2}(R,\overline P)\le \eps,
\]
and this bound holds whatever the contaminating distribution $R$. From a more practical point of view, one can see the contaminated case as follows: for each $i$ one decides between no contamination with a probability $1-\varepsilon$ and contamination with a probability $\varepsilon$ and draws $X_{i}$ accordingly with distribution either $\overline{P}$ or $R$. 
If it were possible to extract from the sample $X_{1},\ldots,X_{n}$ these $N$ data, with $N\sim\cB(n,1-\varepsilon)$, which are really distributed according to the distribution $\overline P\in\sQ$, we would build a $\rho$-estimator $\widetilde P$ on these data. The robustness property ensures that the $\rho$-estimator $\widehat P$ based on the whole data set remains close to $\widetilde P$. Everything works almost as if the $\rho$-estimator $\widehat P$ only considered the non-contamined subsample and ignored the other data, at least when $\eps$ is small enough.
%

\subsection{More robustness}\label{Morerob}
There is an additional aspect of robustness that is not apparent in~\eref{eq-RRHOTYP}. 
Our general result about the performance of $\rho$-estimators, as stated in \eref{eq-risk1}, actually allows that our observations be independent but not necessarily i.i.d., in which case the joint distribution $\gP$ of $(X_{1},\ldots,X_{n})$ is actually of the form $\bigotimes_{i=1}^{n} P_{i}$ but not necessarily of the form $P^{\otimes n}$. Of course we do not know whether $\gP$ is of the first form or the second and, proceding as if $X_{1},\ldots,X_{n}$ were i.i.d., we build a $\rho$-estimator $\widehat P\in{\rm Cl}\!\left(\sQ\right)$ of the presumed common density $P$ and make a mistake which is no longer $h^{2}(P,\widehat P)$ but 
\[
{1\over n}\gh^{2}(\gP,\widehat \gP)\quad\mbox{with}\quad\widehat{\gP}=\widehat{P}^{\otimes n}\qquad\mbox{and}\qquad\gh^{2}(\gP,\widehat \gP)=\sum_{i=1}^{n}h^{2}(P_{i},\widehat P),
\]
which is consistent with the i.i.d.\ case $P_{i}=P$ for all $i$. In this context, we actually get the following analogue of~\eref{eq-RRHOTYP}: for all $\xi>0$,
\begin{equation}
\P\left[{C\over n}\gh^{2}(\gP,\widehat \gP)\le \inf_{Q\in\sQ}\left({1\over n}\sum_{i=1}^{n}h^{2}(P_{i},Q)\right)+{D_{n}(\sQ)+\xi\over n}\right]\ge 1-e^{-\xi}.
\label{eq-rho9}
\end{equation}
This allows many more possibilities of deviations between $\gP$ and the statistical model $\left\{Q^{\otimes n},\;Q\in \sQ\right\}$. For instance, we may have $h(P_{i},\overline{P})\le\varepsilon$ for some $\overline{P}\in \sQ$ and all $i$, $P_{i}\neq P_{i'}$ for all $i\neq i'$, and nevertheless 
\[
\inf_{Q\in\sQ}\left({1\over n}\sum_{i=1}^{n}h^{2}(P_{i},Q)\right)\le \eps^{2}.
\]
An alternative situation corresponds to a small number of ``outliers", namely,  $P_{i}=P$  except on a subset $J\subset\{1,\ldots,n\}$ of indices of small cardinality and, for $i\in J$, $P_{i}$ is completely arbitrary, for instance a Dirac measure. In such a case, for any probability $Q$,
\[
\pa{1-{|J|\over n}}h^{2}(P,Q)\le {1\over n}\sum_{i=1}^{n}h^{2}(P_{i},Q)\le \pa{1-{|J|\over n}}h^{2}(P,Q)+{|J|\over n},
\]
and we deduce from (\ref{eq-rho9}) that, on a set of probability at least $1-e^{-\xi}$, 
\begin{align*}
\frac{C(n-|J|)}{n}h^{2}(P,\widehat P)&\le C{\gh^{2}(\gP,\widehat \gP)\over n}\\
&\le\left[\pa{{n-|J|\over n}}h^{2}\!\left(P,\sQ\right)+{|J|\over n}\right]+{D_{n}(\sQ)+\xi\over n}.  
\end{align*}
Finally, 
\[
\P\cro{Ch^{2}(P,\widehat P)\le h^{2}\!\left(P,\sQ\right)+{|J|+D_{n}(\sQ)+\xi\over n-|J|}}\ge1-e^{-\xi}\quad\mbox{for all }\xi>0.
\]
When $|J|/n$ is small enough, this bound appears to be a slight modification of what we would get from~\eref{eq-RRHOTYP} if $\gP$ were of the form $P^{\otimes n}$. This means that the $\rho$-estimator $\widehat P$ is also robust with respect to a possible departure from the assumption that the $X_{i}$ are i.i.d.

\section{The $\rho$-estimators and the MLE}\label{EX}
As mentioned in the introduction, there are some deep connexions between the MLE and $\rho$-estimators which are mostly due to the similarities in the neighbourhood of 1 between the logarithm and the functions $\psi$ of Proposition~\ref{prop-expsi}. A nice result in this direction was communicated to the authors by Weijie Su on October 2016. It concerns the case of density estimation, as described in Section~\ref{Dens} with a single density model $\overline{\sQ}=\{q\cdot\mu,\,q\in\overline{\cQ}\}$ where $\overline{\cQ}$ is universally separable as defined in Section~\ref{sect-UnivSep}. 
%
\begin{ass}\label{ass-conv}
The function $x\mapsto\varphi(x)=\psi(\sqrt{x})$, where $\psi$ is the function used to define the statistic $\gT$ in (\ref{def-T}), satisfies $\varphi(1)=0$, is concave and admits a positive derivative at 1.
\end{ass}
%
\begin{prop}[Weijie Su 2016]\label{prop-SU}
Let Assumption~\ref{ass-conv} hold, $\overline \cQ$ be a convex set of densities on the measured space $(\sX,\sB,\mu)$ and the likelihood be not identically equal to 0 on $\overline \cQ$. The maximum likelihood estimator $\widehat Q=\widehat q\cdot \mu$ on the density model $\overline \sQ=\left\{q\cdot \mu,\ q\in\overline\cQ\right\}$, when it exists, satisfies 
\[
\gup(\bsX,\widehat{q})=\sup_{q'\in \overline \cQ}\gT(\bsX,\widehat q,q')=0=\inf_{q\in \overline \cQ}\,\sup_{q'\in \overline \cQ}\gT(\bsX,q,q')=\inf_{q\in \overline \cQ}\gup(\bsX,q)
\]
and is therefore a $\rho$-estimator relative to $(\overline \cQ,0)$. 
\end{prop}
\begin{proof}
Given the data $X_{1},\ldots,X_{n}$, if the maximum likelihood $\widehat q$ exists, it is unique since the logarithm is strictly concave. Moreover $\widehat q(X_{i})>0$ for all $i\in\{1,\ldots,n\}$. Since $\gup(\bsX,\widehat{q})\ge\gT(\bsX,\widehat q,\widehat{q})=0$, it suffices to prove that
\[
L(q)=\gT(\gx,\widehat{q},q)=\sum_{i=1}^{n}\varphi\pa{q(X_{i})\over \widehat q(X_{i})}\le 0\quad \mbox{for all $q\in \overline \cQ$.}
\]
For $q\in\overline \cQ$ and $\eps\in [0,1]$, $(1-\eps)\widehat q+\eps q\in\overline \cQ$ and, when $\varepsilon\rightarrow0$,
\begin{align}
L\left(\st(1-\eps)\widehat q+\eps q\right)& = n\varphi(1)+\eps\left[\varphi'(1)\sum_{i=1}^{n}{q(X_{i})\over \widehat q(X_{i})}+o(1)\right]\label{eq-dlphi}\\
&=\eps\left[\varphi'(1)\sum_{i=1}^{n}{q(X_{i})\over \widehat q(X_{i})}+o(1)\right]
\nonumber 
\end{align}
since $\varphi(1)=0$. When $\varphi$ is the logarithm and $\varepsilon>0$, the right-hand side of (\ref{eq-dlphi}) is negative since $\widehat q$ is the unique MLE. Letting $\eps$ go to 0 we derive that
\begin{equation}
\sum_{i=1}^{n}{q(X_{i})\over \widehat q(X_{i})}\le 0\quad \mbox{for all $q\in\overline \cQ$.}
\label{eq-Weijie}
\end{equation}
Moreover, the concavity of $\varphi$ implies that for all $\eps\in[0,1]$
\[
\varphi\pa{(1-\eps)\widehat q(X_{i})+\eps q(X_{i})\over \widehat q(X_{i})}\le\varphi(1)+\varepsilon
{q(X_{i})\over \widehat q(X_{i})}\varphi'(1)=\varepsilon\varphi'(1){q(X_{i})\over \widehat q(X_{i})}
\]
so that, for all $q\in\overline \cQ$, $L\left(\st(1-\eps)\widehat q+\eps q\right)\le\eps\varphi'(1)\sum_{i=1}^{n}{q(X_{i})/\widehat q(X_{i})}$ and 
\[
\gT(\gx,\widehat{q},q)=L(q)\le \varphi'(1)\sum_{i=1}^{n}{q(X_{i})\over \widehat q(X_{i})}\le 0
\]
by (\ref{eq-Weijie}), which concludes the proof.
\end{proof}
Note that both functions $\psi_{1}$ and $\psi_{2}$ of Proposition~\ref{prop-expsi} satisfy Assumption~\ref{ass-conv}. 

We may now derive the following relationship between the MLE and $\rho$-estimators, the proof of which immediately follows from Theorem~\ref{thm-Exten} and Su's Proposition.
%
\begin{cor}\label{cor-rho/mle}
Let $\overline \cQ$ be a convex set of densities on the measured space $(\sX,\sB,\mu)$ which is universally separable on $\sX$ with countable and $\sbT$-dense subset $\cQ$ and $\psi$ satisfy Assumptions~\ref{ass-psi} and \ref{ass-conv}. The maximum likelihood estimator $\widehat Q=\widehat q\cdot \mu$ on the density model $\overline \sQ=\{q\cdot \mu,\ q\in\overline\cQ\}$, when it exists, is a $\rho$-estimator on the $\rho$-density model $\sQ=\{q\cdot \mu,\ q\in\cQ\}$.
\end{cor}
For illustration, the set $\overline \cQ=\overline \cH_{\cI}$ of right-continuous histograms based on a fixed partition $\cI$ of $[0,1)$ into $D\ge 1$ intervals is convex and obviously universally separable. The usual histogram $\widehat p$ based on $\cI$, which corresponds to the MLE on $\overline \cH_{\cI}$, can be viewed as a $\rho$-estimator on a countable subset of $\overline \cH_{\cI}$. Taking back some of the examples of convex and universally separable density sets given in Section~\ref{sect-UnivSep}, we deduce that the MLE on $\overline \cH_{\downarrow}$, i.e.\ the Grenander estimator, or on the set $\overline \cH_{\alpha}(L)$ are also $\rho$-estimators.

\section{Bounding the $\rho$-dimension function of a $\rho$-model with applications to the risk of $\rho$-estimators}\label{sect-DM}
It clearly follows from the results of Section~\ref{MRI} that bounding the risk of $\rho$-estimators amounts to bounding the $\rho$-dimension of $\rho$-models which we shall now do under various assumptions. Throughout this section we fix the function $\psi$ satisfying Assumption~\ref{H-debase} (typically $\psi_{1}$ or $\psi_{2}$) and when we shall say that some quantity depends on $\psi$, this will mean that it actually depends on $a_{1}$ and $a_{2}$.

In view of~\eref{eq-risk1b}, of special interest is the situation where the $\rho$-dimension function $(\gP,\overline \gP)\mapsto D^{\sbQ}(\gP,\overline\gP)$ of the $\rho$-model $\sbQ$ is uniformly bounded from above on $\sbP\times  \sbQ$ by some constant $D_{n}\ge 1$. Let us begin by a few elementary considerations. If one can find a representation $\cR=(\gmu,{\cbQ})$ of $\sbQ\cup\{\overline \gP\}$ such that $w\left(\cR,\sbQ,\gP,\overline \gP,y\right)\le a_{1}y^{2}/8$ for all $y\ge\beta^{-1}\sqrt{D}$, we immediately derive from the definition of $D^{\sbQ}$ that
\begin{equation}
D^{\sbQ}(\gP,\overline \gP)\le D\vee 1\quad \mbox{for }(\gP,\overline \gP)\in \sbP^{2}.
\label{eq-bound-D}
\end{equation}
In particular, since $|\psi|\le1$, the expectation in (\ref{eq-w1}) is never larger than $2n$ so that $w\left(\cR,\sbQ,\gP,\overline \gP,y\right)\le a_{1}y^{2}/8$ for $y\ge4\sqrt{(n/a_{1})}$ and (\ref{eq-bound-D}) always holds with 
\[
\sqrt{D}=4\beta\sqrt{(n/a_{1})}=\sqrt{na_{1}}/a_{2}\quad\mbox{ or equivalently }\quad
D=na_{1}/a_{2}^{2}\le n/6.
\]
Finally, whatever the choices of $\sbQ$ and $\psi$,
\begin{equation}
D^{\sbQ}(\gP,\overline \gP)\le n/6\quad\mbox{for all }(\gP,\overline\gP)\in\sbP^{2},
\label{eq-RB}
\end{equation}

More precise bounds will now be given that depend on some specific features of $\sbQ$. 
\subsection{The finite case}\label{DM1}
Given a finite subset $\sbQ\subset \sbP$, let us set
\begin{equation}
\sH\left(\sbQ,y\right)=\sup_{\gP\in\sbP}\log_{+}\!\pa{2\left|\sbQ\cap \sbB(\gP,y)\strut\right|}\ \ \mbox{for all}\ y> 0
\label{eq-reseau}
\end{equation}
and for $x_{0}=\sqrt{2}\cro{{\sqrt{1+(\beta/a_{2})}+1}}$
\begin{equation}\label{def-eta}
\overline \eta=\sup\ac{z>0, \,\sqrt{\sbH\left(\sbQ,z/\beta\right)}>z/x_{0}}.
\end{equation}
Since $\sbQ$ is finite, the function $y\mapsto\sH\left(\sbQ,y\right)$ is bounded by $\log\pa{2\left|\sbQ\right|}$ and since $\beta/a_{2}=a_{1}/(4a_{2}^{2})\le 1/24$,  
\begin{equation}
\overline \eta\le x_{0}\sqrt{\log\pa{2\left|\sbQ\right|}}<3\sqrt{\log\pa{2\left|\sbQ\right|}}.
\label{Eq-eta}
\end{equation}
%
\begin{prop}\label{cas-fini}
If  $\sbQ$ is a finite subset of $\sbP$ and $\overline \eta$ is defined by~\eref{def-eta}, 
\[
D^{\sbQ}(\gP,\overline \gP)\le D_{n}\left(\sbQ\right)=\overline \eta^{2}\vee 1<9\log\pa{2\left|\sbQ\right|}\quad\mbox{for all }(\gP,\overline\gP)\in\sbP^{2}.
\]
\end{prop}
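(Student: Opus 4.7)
The plan is to bound the empirical-process fluctuation $\gw^{\sbQ}(\gP,\overline \gP,y)$ by a maximal Bernstein-type inequality and then to apply the sufficient condition~\eref{eq-bound-D} with $D=\overline\eta^{2}$. Fix any representation $\cR(\sbQ)=(\gmu,\cbQ)$ of $\sbQ$, and let $\gQ=\gq\cdot\gmu$ range over $\sbB^{\sbQ}(\gP,\overline \gP,y)$. Each centered statistic $\gT(\bsX,\overline \gp,\gq)-\E[\gT(\bsX,\overline \gp,\gq)]$ is a sum of $n$ independent random variables taking values in $[-1,1]$ by Assumption~\ref{H-debase}-i), with total variance at most $a_{2}^{2}[\gh^{2}(\gP,\overline \gP)+\gh^{2}(\gP,\gQ)]\le a_{2}^{2}y^{2}$ by~\eref{varT}. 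Bernstein's inequality (used just as in the proof of Lemma~\ref{lem-test}) therefore gives, for every $t\ge0$,
\[
\P\cro{\ab{\gT(\bsX,\overline \gp,\gq)-\E\cro{\gT(\bsX,\overline \gp,\gq)}}\ge t}\le 2\exp\pa{-\frac{t^{2}}{2\pa{a_{2}^{2}y^{2}+t/3}}}.
\]

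Since $\sbB^{\sbQ}(\gP,\overline \gP,y)\subset\sbQ\cap\sbB(\gP,y)$, the cardinality $N$ of this index set satisfies $\log(2N)\le \sH(\sbQ,y)$ by~\eref{eq-reseau}. A union bound followed by the standard integration of the resulting tail yields a maximal inequality of the form
\[
\gw^{\sbQ}(\gP,\overline \gP,y)\le a_{2}y\sqrt{2\sH(\sbQ,y)}+\sH(\sbQ,y).
\]
Writing $s=\sqrt{\sH(\sbQ,y)}$ and using $a_{1}=4\beta a_{2}$, so that $a_{1}y^{2}/8=\beta a_{2}y^{2}/2$, the inequality $\gw^{\sbQ}(\gP,\overline \gP,y)\le a_{1}y^{2}/8$ becomes the quadratic condition $s^{2}+\sqrt{2}\,a_{2}y\,s-\beta a_{2}y^{2}/2\le 0$. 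Its positive root, after rationalizing $\sqrt{1+\beta/a_{2}}-1=(\beta/a_{2})/(\sqrt{1+\beta/a_{2}}+1)$, is
\[
s^{*}=\frac{a_{2}y}{\sqrt{2}}\pa{\sqrt{1+\beta/a_{2}}-1}=\frac{\beta y}{\sqrt{2}\pa{\sqrt{1+\beta/a_{2}}+1}}=\frac{\beta y}{x_{0}},
\]
precisely the threshold appearing in~\eref{def-eta}.

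Consequently, $\gw^{\sbQ}(\gP,\overline \gP,y)\le a_{1}y^{2}/8$ as soon as $\sqrt{\sH(\sbQ,y)}\le \beta y/x_{0}$, which holds whenever $\beta y\ge\overline \eta$; this inequality is uniform in $\gP\in\sbP$ and $\overline\gP\in\sbQ$. Applying~\eref{eq-bound-D} with $D=\overline\eta^{2}$ then delivers $D^{\sbQ}(\gP,\overline \gP)\le\overline\eta^{2}\vee1$, and the strict upper bound $\overline\eta^{2}<9\log(2|\sbQ|)$ is exactly~\eref{Eq-eta}, which relies on the trivial estimate $\sH(\sbQ,y)\le\log(2|\sbQ|)$ together with $x_{0}<3$ (a consequence of $\beta/a_{2}=a_{1}/(4a_{2}^{2})\le1/24$).

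The main technical step is the maximal Bernstein inequality with a coefficient in front of $\sH(\sbQ,y)$ that is compatible with the specific value of $x_{0}$ in~\eref{def-eta}. Any reasonable maximal inequality will produce a bound of the same structural shape (a $\sqrt{\sH}$ term plus a linear term in $\sH$), but tracking the constants carefully, so that the quadratic equation whose positive root appears above matches the statement, is the only nontrivial ingredient of the argument.
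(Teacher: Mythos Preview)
Your proof is correct and follows essentially the same route as the paper's: the paper invokes Proposition~50 of Baraud, Birg\'e and Sart~\citeyearpar{Baraud:2014qv} to obtain the identical maximal bound $\gw^{\sbQ}(\gP,\overline\gP,y)\le \sH(\sbQ,y)+a_{2}y\sqrt{2\sH(\sbQ,y)}$, then performs the same quadratic analysis (with the substitution $x=\sqrt{\sH(\sbQ,y)}/y$) to reach the threshold $\beta x_{0}^{-1}$ and concludes via~\eref{eq-bound-D} and~\eref{Eq-eta}. The only cosmetic difference is that you sketch the Bernstein-plus-union-bound argument yourself rather than citing the external proposition; one minor point is that the implication from the definition of $\overline\eta$ technically gives $\sqrt{\sH(\sbQ,y)}\le\beta y/x_{0}$ for $\beta y>\overline\eta$ (strict), not $\ge$, but this is exactly what~\eref{eq-bound-D} needs.
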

The proof of this result is given in Section~\ref{sect-pcas-fini} of the Appendix. The first upper bound $\overline \eta^{2}\vee 1$ for $D^{\sbQ}(\gP,\overline \gP)$ neither depends on $\gP$ nor on $\overline \gP$ but might depend on $\beta$. The second bound only depends on the cardinality of $\sbQ$ and therefore holds whatever $\psi$. 

If a model $\overline{\sbQ}$ is a totally bounded subset of the metric space $\left(\sbP,\gh\right)$ and $\eta>0$, one can cover $\overline{\sbQ}$ by a finite number of closed balls of radius $\eta$ and the set $\sbQ[\eta]$ of their centers is an $\eta$-net for $\overline{\sbQ}$ (see Definition~\ref{def-net}), which means that $\gh(\gQ,\sbQ[\eta])\le \eta$ for all $\gQ\in \overline{\sbQ}$. The function $y\mapsto\sH(\sbQ[\eta],y)$ measures in a sense the massiveness of $\sbQ[\eta]$ and turns out to be a useful tool to measure that of $\overline{\sbQ}$. We shall in particular use the following classical notions of dimension based on the metric structure of $\overline{\sbQ}$.
%
\begin{df}\label{def-metdim}
A  set $\overline{\sbQ}\subset \sbP$ is said to have a metric dimension bounded by $\widetilde{D}$, where $\widetilde D$ is a right-continuous function  from $(0,+\infty)$ to $[1/2,+\infty]$, if, for any positive $\eta$, there exists an $\eta$-net $\sbQ[\eta]$ for $\overline{\sbQ}$ which satisfies
\begin{equation}\label{def-birge}
\sbH(\sbQ[\eta],y)\le(y/\eta)^{2}\widetilde D_{n}(\eta)\quad\mbox{for all } y\ge2\eta.
\end{equation}
We shall say that $\overline{\sbQ}$ has an entropy dimension bounded by $V\ge0$ if, for any $\eta>0$, there exists an $\eta$-net $\sbQ[\eta]$ of $\overline{\sbQ}$ such that
\begin{equation}\label{def-entropy}
\sbH(\sbQ[\eta],y)\le V\log\pa{y/\eta}\quad\mbox{for all }\ \ y\ge2\eta.
\end{equation} 
\end{df}
For the sake of convenience, we have slightly modified the original definition of the metric dimension due to Birg\'e~\citeyearpar{MR2219712} (Definition~6 p.~293) which is actually obtained by replacing the left-hand side of~\eref{def-birge} by~$\sbH(\sbQ[\eta],y)-\log 2$.  Since in both definitions the metric dimension is not smaller than $1/2$, it is easy to check that, if $\overline \sbQ$ has a metric dimension bounded by $D_{M}$ in Birg\'e's sense, it has a metric dimension bounded by $\widetilde D=(1+(\log 2)/2)D_{M}$ in our sense and, conversely, if $\overline \sbQ$ has a metric dimension bounded by $\widetilde D$ in our sense, it also has a metric dimension bounded by $\widetilde D$ in Birg\'e's sense. Hence, changing $\widetilde D$ into $D_{M}$ only changes the numerical constants.

The logarithm being a slowly varying function, 
it is not difficult to see that the notion of metric dimension is more general than the 
entropy one in the sense that if $\overline{\sbQ}$ has an entropy dimension bounded 
by $V$, then it also has a metric dimension bounded by $\widetilde D_{n}(\cdot)$ with
\begin{equation}\label{comp}
\widetilde D_{n}(\eta)\le(1/2)\vee[V(\log2)/4]\quad\mbox{for all }\eta>0.
\end{equation}

If $\overline \sbQ$ has a metric dimension bounded by $\widetilde D$ and if $\eta$ is a positive number satisfying 
\begin{equation}\label{eq-Deta}
\widetilde D_{n}(\eta)\le(\beta\eta/x_{0})^{2},
\end{equation}
with $x_{0}$ given by~\eref{def-eta}, we deduce from~\eref{def-birge} that there exists an $\eta$-net $\sbQ[\eta]$ for $\overline \sbQ$ for which
\[
\sqrt{\sbH(\sbQ[\eta],z/\beta)}\le z/x_{0}\quad\mbox{for all }z\ge2\beta\eta.
\]
It then follows that $\overline\eta$, as defined in (\ref{def-eta}), satisfies $\overline\eta\le 2\beta\eta$ and we deduce from Proposition~\ref{cas-fini} that the $\rho$-dimension function $D^{\sbQ}$ of $\sbQ=\sbQ[\eta]\subset \overline \sbQ$ satisfies
\begin{equation}\label{eq-BDM}
D^{\sbQ}(\gP,\overline \gP)\le D_{n}(\sbQ)=(2\beta\eta)^{2}\vee 1\quad\mbox{for all } (\gP,\overline \gP)\in\sbP^{2}.
\end{equation}
If, in particular, $\overline{\sbQ}$ has an entropy dimension bounded by $V\ge0$ we deduce from~\eref{comp} that~\eref{eq-Deta} holds for 
\begin{equation}\label{eq-defetaED}
\eta^{2}={x_{0}^{2}\over 2\beta^{2}}\pa{1\bigvee {V\log 2\over 2}}<{9\over 2\beta^{2}}\pa{1\bigvee {V\log 2\over 2}}
\end{equation}
and we derive from~\eref{eq-BDM} that 
\begin{equation}
D^{\sbQ}(\gP,\overline \gP)\le D_{n}(\sbQ)=18\pa{1\bigvee {V\log 2\over 2}}\quad\mbox{for all }(\gP,\overline \gP)\in\sbP^{2}.
\label{eq-Bentro}
\end{equation}
Since in both cases $\gh(\gP,\sbQ)\le \gh(\gP,\overline \sbQ)+\eta$ for all $\gP\in\sbP$ because $\sbQ$ is an $\eta$-net for $\overline{\sbQ}$, we obtain from (\ref{eq-BDM}), (\ref{eq-Bentro}) and \eref{eq-FRM-GM0} the following result.
%
\begin{cor}\label{cor-casFD}
Let $\psi$ be a function satisfying Assumption~\ref{H-debase}.
\begin{listi}
\item If $\overline \sbQ$ has a metric dimension bounded by $\widetilde{D}$ and $\eta$ satisfies~\eref{eq-Deta}, any $\rho$-estimator $\widehat \gP$ based on a suitable $\eta$-net $\sbQ$ for $\overline \sbQ$ satisfies for all $\gP\in\sbP$ and $\xi>0$
\begin{equation}
\P\left[C\gh^{2}(\gP,\widehat \gP)\le \gh^{2}(\gP,\overline \sbQ)+\left(\eta^{2}\vee1\right)+\xi\right]\ge1-e^{-\xi}.
\label{bornerisk-DM}
\end{equation}
\item If $\overline \sbQ$ has an entropy dimension bounded by $V$ and $\eta$ satisfies~\eref{eq-defetaED}, any $\rho$-estimator $\widehat \gP$ based on a suitable $\eta$-net $\sbQ$ for $\overline \sbQ$ satisfies sfor all $\gP\in\sbP$ and $\xi>0$
\begin{equation}
\P\left[C\gh^{2}(\gP,\widehat \gP)\le \gh^{2}\left(\gP,\overline{\sbQ}\right)+(V\vee1)+\xi\right]\ge1-e^{-\xi}.
\label{eq-bornecasFD}
\end{equation}
\end{listi}
In both cases, $C$ is a constant depending only on the choice of $\psi$.
\end{cor}
%

\subsection{Bounds based on the VC-index}\label{DM2}
In this section we investigate the case of a model $\overline \sbQ$ given by a specific  representation $(\gmu, \overline \cbQ)$ where the density set $\overline \cbQ$ is possibly uncountable but satisfies some property to be described below. 

A density $\gq=(q_{1},\ldots,q_{n})\in\cbL(\gmu)$ can be viewed as a function on $\overline \sX=\bigcup_{i=1}^{n}\left(\{i\}\times\sX_{i}\right)$ defined, for $\overline x=(i,x)$ with $x\in\sX_{i}$, by $\gq(i,x)=q_{i}(x)$ so that a subset $\overline{\cbQ}\subset \cbL(\gmu)$ is now viewed as a class of real-valued functions on $\overline \sX$.  A common notion of dimension for the class $\overline{\cbQ}$ is the following one. 
%
\begin{df}
A class $\sF$ of functions from a set $\sX$ with values in $(-\infty,+\infty]$ is 
VC-subgraph with index $\overline V$ (or equivalently with dimension $\overline V-1\ge 0$) if the class of subgraphs $\{(x,u)\in\sX\!\times\R\,|\,f(x)>u\}$ as $f$ varies in $\sF$ is a VC-class of sets in
$\sX\!\times\R$ with index $\overline V$ (or dimension $\overline V-1$).
\end{df}
We recall that, by definition, the index $\overline{V}$ of a VC-class is a positive integer, hence its dimension $\overline{V}-1\in\N$. For additional information about VC-classes and related notions, we refer to van der Vaart and Wellner~\citeyearpar{MR1385671} and Baraud {\em et al.}~\citeyearpar[Section~8]{MR3595933}. 

%
\begin{prop}\label{cas-VC}
Let $\psi$ satisfy Assumption~\ref{H-debase} and $\overline{\cbQ}\subset \cbL(\gmu)$ be a VC-subgraph class of densities on $\overline \sX$ with index not larger than $\overline V$. For any $\rho$-model $\sbQ\subset \overline \sbQ=\{\gq\cdot \gmu,\; \gq\in\overline \cbQ\}$, for all $(\gP,\overline\gP)\in\sbP\times \sbP^{\gmu}$
\[
D^{\sbQ}(\gP,\overline \gP)\le D_{n}\left(\overline{\sbQ}\right)=C_{1}\left(\overline V \wedge n\right)\cro{1+\log_{+}\pa{n/\overline V}},
\]
where $C_{1}$ is a universal constant.
\end{prop}
The proof is given in Section~\ref{sect-pcas-VC} of the Appendix. A nice feature of this bound lies in the fact that it neither depends on the choices of $\psi$ nor on the cardinality of $\sbQ$ which can therefore be arbitrarily large. In particular, when $\sbQ$ is $\sbV$-dense in $\overline \sbQ$ we deduce the following result from Proposition~\ref{cas-VC},  \eref{eq-FRM-GM0} (with $\eta=0$) and Theorem~\ref{thm-Exten}.
%
\begin{cor}\label{cor-VC}
Let $\psi$ be a function satisfying Assumption~\ref{H-debase} and $\overline{\cbQ}\subset \cbL(\gmu)$ a VC-subgraph class of densities on $\overline \sX$ with index $\overline V$. Any $\rho$-estimator $\widehat \gP$ built on a countable and $\sbV$-dense subset $\sbQ$ of $\overline \sbQ=\left\{\gq\cdot \gmu,\; \gq\in\overline \cbQ\right\}$ satisfies, for all $\gP\in\sbP$ and $\xi>0$,
\begin{equation}
\:\P\left[C\gh^{2}(\gP,\widehat \gP)\le \gh^{2}\left(\gP,\overline \sbQ\right)+\left(\overline V \wedge n\right)\cro{1+\log_{+}\!\pa{{n/\overline V }}}+\xi\right]
\ge1-e^{-\xi}
\label{eq-bornecasVC}
\end{equation}
where the constant $C$ only depends on the choice of $\psi$. If, moreover, $\overline \cbQ$ is universally separable, then \eref{eq-bornecasVC} holds for any $\rho$-estimator relative to $\left((\gmu, \overline \cbQ), \bf{0}\right)$.
\end{cor}
In the particular case of density estimation the following result is useful in view of applying Proposition~\ref{cas-VC}.
%
\begin{prop}\label{cas-VCd}
If $\overline{\cQ}\subset \cL(\mu)$ is VC-subgraph on $\sX$ with index $\overline V$, then the set $\overline{\cbQ}=\left\{\gq=(q,\ldots,q),\ q\in\overline{\cQ}\right\}\subset \cbL(\gmu)$ is VC-subgraph on $\overline \sX$ with index not larger than $\overline V$.
\end{prop}
\begin{proof}
If the class of subgraphs $\left\{\left.\!(\overline x,u)\in\overline \sX\!\times\R \,\right|\gq(\overline x)>u\right\}$, with $\gq$ running in $\overline{\cbQ}$, shatters the subset $\left\{(\overline x_{1},u_{1}),\ldots,(\overline x_{k},u_{k})\right\}$ of $\overline \sX\times \R$, then, whatever $J\subset \{1,\ldots,k\}$, there exists $\gq\in\overline{\cbQ}$ such that $j\in J$ is equivalent to $\gq(\overline x_{j})=q(x_{j})>u_{j}$.
Hence, the class of subgraphs $\{(x,u)\in\sX\!\times\R\,|\,q(x)>u\}$ with $q$ running in $\overline{\cQ}$ shatters the subset $\left\{(x_{1},u_{1}),\ldots,(x_{k},u_{k})\right\}$ of $\sX\times \R$ and therefore $k+1\le \overline V$.
\end{proof}
%
\subsection{More bounds}\label{DM3}
As we have observed in the previous sections --- see (\ref{bornerisk-DM}), (\ref{eq-bornecasFD}) and (\ref{eq-bornecasVC}) ---, there are various situations for which, given a model $\overline \sbQ\subset \sbP$, it is possible to build a $\rho$-estimator $\widehat \gP$ with values in $\overline \sbQ$ satisfying for all $\gP\in\sbP$ and $\xi>0$,
\[
\P\left[C\gh^{2}(\gP,\widehat \gP)\le \gh^{2}\left(\gP,\overline{\sbQ}\right)+D_{n}(\overline \sbQ)+\xi\right]\ge1-e^{-\xi}
\]
for some quantity $D_{n}\left(\overline{\sbQ}\right)\ge 1$ only depending on the specific features of $\overline \sbQ$ and some constant $C>0$ depending on the choice of $\psi$. Such an inequality leads to a risk bound of the following form (with $C'>0$):
\begin{equation}\label{eq-borne-risk}
\E\cro{\gh^{2}(\gP,\widehat \gP)}\le C'\left[\gh^{2}\left(\gP,\overline{\sbQ}\right)+D_{n}(\overline \sbQ)\right]\quad \mbox{for all }\gP\in\sP
\end{equation}
and allows us to bound from above the minimax risk over $\overline \sbQ$ by $C'D_{n}\left(\overline \sbQ\right)$.

However, not all statistical models admit a finite minimax risk and for such models there is consequently no hope to bound from above the $\rho$-dimension function uniformly as we did in the previous sections. One such example is the set of probabilities on $(0,+\infty)$ with non-increasing densities with respect to the Lebesgue measure. More examples can also be found in Baraud and Birg\'e~\citeyearpar{MR3565484}. For some of these models it is possible to build a $\rho$-estimator the risk of which does not degenerate, a typical example being the Grenander estimator which is, as already seen, a $\rho$-estimator.

Following Baraud~\citeyearpar{Bar2016}, we introduce this definition:
%
\begin{df}\label{weakVC}
A class of functions $\sF$ defined on a set $\sX$ and with values in $[-\infty,+\infty]$ is said to be weak VC-major with dimension not larger than $k\in\N$ if, for all $u\in\R$, the class of subsets
\[
\sC_{u}(\sF)=\ac{\st\{x\in\sX\,|\,f(x)>u\},\ f\in\sF}
\]
is a VC-class with dimension not larger than $k$ (index not larger than $k+1$). The weak dimension of $\sF$ is the smallest of such integers $k$.
\end{df}
%
\begin{df}
Let $\sF$ be a class of real-valued functions on $\sX$. We shall say that an element  $\overline f\in \sF$ is extremal in $\sF$ with degree $d\in\N$ if the class of functions 
\[
\left(\sF/\overline f\right)=\left\{f/\overline f,\ f\in\sF\right\}
\]
is weak VC-major with dimension $d$.
\end{df}

For $\gmu\in\sbM$, we consider a density set $\overline \cbQ\subset \cbL(\gmu)$ which is viewed as a class of real-valued functions on $\overline \sX=\bigcup_{i=1}^{n}\left(\{i\}\times\sX_{i}\right)$ as we did in Section~\ref{DM2}. The corresponding model $\overline \sbQ$ is $\left\{\gq\cdot \gmu,\; \gq\in \overline \cbQ\right\}$ and, for $d\in\{1,\ldots,n\}$, we denote by $\overline \cbQ_{d}$ the subset of $\overline \cbQ$ of those densities $\gq$ which are extremal in $\overline \cbQ$ with degree $d$. We set $\overline \sbQ_{d}=\left\{\gq\cdot \gmu,\; \gq\in \overline \cbQ_{d}\right\}$ and let $\cD$ be the subset of $\{1,\ldots,n\}$ consisting of  those $d$ such that $\overline \sbQ_{d}\neq \varnothing$. 
%
\begin{prop}\label{cas-extreme}
Let $\psi$ satisfy Assumption~\ref{H-debase}. For all $\rho$-models $\sbQ\subset \overline \sbQ$ and all $d\in \cD$,  
\[
D^{{\sbQ}}(\gP,\overline \gP)\le 33d\left[\log\!\left(e^{2}n/d\right)\right]^{3}\quad\mbox{for all }(\gP,\overline \gP)\in\sbP\times \overline \sbQ_{d}.
\]
\end{prop}
The proof is given in Section~\ref{sect-pcas-extreme} of the Appendix.
This upper bound, although depending on the specific features of $\overline \sbQ$, is free from the choices of $\psi$. We immediately derive from Proposition~\ref{cas-extreme} and  Theorem~\ref{thm-main1} with a suitable choice of $\overline \gP$ the following result.
%
\begin{cor}\label{cor-Extrem}
Let $\psi$ satisfy Assumption~\ref{H-debase} and assume that $\cD\ne \varnothing$ and that $\overline \sbQ_{d}$ contains a countable and $\sbV$-dense subset $\sbQ_{d}$ for all $d\in\cD$. Any $\rho$-estimator $\widehat \gP$ on a $\rho$-model $\sbQ\subset \overline \sbQ$ containing $\bigcup_{d\in\cD}\sbQ_{d}$ satisfies, for all $\gP\in\sbP$ and $\xi>0$,
\begin{equation}\label{eq-cor-Extrem}
\P\cro{C\gh^{2}(\gP,\widehat \gP)\le \inf_{d\in\cD}\cro{\gh^{2}(\gP,\overline \sbQ_{d})+d\left[\log\!\left(e^{2}n/d\right)\right]^{3}}+\xi}\ge 1-e^{-\xi},
\end{equation}
for some constant $C$ depending on $\psi$ only. If moreover, $\overline \cbQ$ is universally separable, any $\rho$-estimator relative to $\left((\gmu, \overline \cbQ),\bf{0}\right)$ also satisfies \eref{eq-cor-Extrem}.
\end{cor}
%
\begin{proof}
Proposition~\ref{cas-extreme} and  Theorem~\ref{thm-main1} lead to \eref{eq-cor-Extrem}. When $\overline \cbQ$ is universally separable there exists a countable and $\sbT$-dense subset $\cbQ'\subset \overline \cbQ$. The countable set $\cbQ=\cbQ'\bigcup\left(\bigcup_{d\in\cD}\cbQ_{d}\right)$ is still countable and $\sbT$-dense in $\overline \cbQ$ and the corresponding $\rho$-model $\sbQ$ is $\sbV$-dense in $\overline \sbQ$ and contains $\bigcup_{d\in\cD}\sbQ_{d}$. By Theorem~\ref{thm-Exten} any $\rho$-estimator relative to $\left((\gmu, \overline \cbQ),\bf{0}\right)$ is also a $\rho$-estimator relative to $((\gmu,\sbQ),\bf{0})$ and therefore satisfies \eref{eq-cor-Extrem}.
\end{proof}
Note that the bound depends on the initial representation $\left(\gmu, \overline \cbQ\right)$ because the sets $\overline{\cbQ}_{d}$, hence the sets $\overline \sbQ_{d}$, depend on $\overline \cbQ$. This result looks like a model selection result among the sets $\left\{\overline \sbQ_{d},\; d\in \cD\right\}$ although the $\rho$-estimator $\widehat \gP$ is built on a single $\rho$-model $\sbQ\subset\overline \sbQ$ with a nul penalty function. It implies that the minimax risk over each set $\overline \sbQ_{d}$ is necessarily finite, while that on $\overline \sbQ$ might not be. 

In the particular case of density estimation, the following result turns to be useful in view of applying Proposition~\ref{cas-extreme}.
%
\begin{prop}
Let $\overline \cQ$ be a subset of $\cL(\mu)$ viewed as a class of functions on $\sX$. If $\overline p$ is extremal in $\overline{\cQ}$ with degree $d$, $\overline \gp=(\overline p,\ldots,\overline p)$ is extremal in $\overline{\cbQ}=\left\{\gq=(q,\ldots,q),\ q\in\overline{\cQ}\right\}$, viewed as a class of functions on $\overline \sX$, with degree not larger than $d$.
\end{prop}
%
\begin{proof}
Let $u\in\R$. If $\sC_{u}\!\left(\st(\overline{\cbQ}/\overline \gp)\right)$ shatters $\left\{\overline x_{1},\ldots,\overline x_{k}\right\}\subset\overline \sX$, for all $J\subset \{1,\ldots,k\}$ there exists $\gq\in \overline{\cbQ}$ such that $j\in J$ if and only if $\left(\gq/\overline \gp\right)(\overline x_{j})=\left(q/\overline p\right)(x_{j})>u$.
Hence $\sC_{u}\!\left(\st(\overline{\cQ}/\overline p)\right)$ shatters $\left\{x_{1},\ldots, x_{k}\right\}\subset\sX$ which is only possible for $k\le d$.
\end{proof}
%
\subsection{Some examples of statistical models}\label{MRII}
Let us restrict ourselves here to the density framework where $X_{1},\ldots,X_{n}$ are assumed to be i.i.d.\ with common distribution $P$ on $\sX$ and we have at hand a set of candidate probabilities $Q=q\cdot\mu$ with $q\in\overline{\cQ}\subset\cL(\mu)$ for $P$. We shall provide here some examples of density sets $\overline \cQ$ to which Proposition~\ref{cas-VC} or~\ref{cas-extreme} applies. 

\paragraph{\bf Piecewise constant functions}
Let $k$ be a positive integer and $\sX$ an arbitrary interval of $\R$ (possibly $\sX=\R$). We define $\sF_{k}$ as the class of functions $f$ on $\sX$ such that there exists a partition $\sI(f)$ of $\sX$ into at most $k$ intervals (of positive lengths) with $f$ constant on each of these intervals. Note that $\sI(f)$ depends on $f$. The following result is to be proved in Section~\ref{pr9} of the Appendix.
%
\begin{prop}\label{prop-CPM}
The set $\sF_{k}$ is VC-subgraph with dimension bounded by $2k$.
\end{prop}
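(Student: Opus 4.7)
The plan is to prove by contradiction that no set of $N=2k+1$ points in $\sX\times\R$ can be shattered by the subgraphs $\{(x,u):f(x)>u\}$ of elements $f\in\sF_k$; this will give VC index at most $2k+1$ and hence subgraph dimension at most $2k$.

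First I would reduce to the case of pairwise distinct $x$-coordinates. If two points share the same first coordinate, then any $f$ takes a common value there, so the point with smaller $u$ must belong to the subgraph whenever the other one does; such a pair cannot be shattered. After discarding that case and relabeling, assume $x_1<x_2<\cdots<x_{2k+1}$, with arbitrary $u_1,\ldots,u_{2k+1}\in\R$.

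Next I would build one explicit forbidden dichotomy $S\subset\{1,\ldots,2k+1\}$. Group the first $2k$ indices into the $k$ consecutive pairs $\{2l-1,2l\}$, $l=1,\ldots,k$; for each pair put into $S$ the index carrying the larger $u$-value (breaking ties arbitrarily) and leave the other out, and decide the membership of $2k+1$ arbitrarily. The crucial property by construction is that inside every pair $\{2l-1,2l\}$, the element of $S$ has strictly larger $u$ than the element not in $S$.

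Finally I would derive a contradiction. Suppose some $f\in\sF_k$ realizes $S$, i.e.\ $\{i:f(x_i)>u_i\}=S$. Since $f$ is constant on at most $k$ intervals of $\sX$, these intervals cut the sorted indices into at most $k$ contiguous blocks. On a block where $f$ takes value $c$, the subgraph picks out exactly those indices with $u_i<c$, so inside any block every index in $S$ has strictly smaller $u$-value than every index outside $S$. This directly contradicts the design of $S$ as soon as both elements of a pair $\{2l-1,2l\}$ sit in a common block. Therefore each of the $k$ pairs must be split between two distinct blocks, forcing a block boundary in each of the $k$ distinct gaps $(x_{2l-1},x_{2l})$; but $m\le k$ blocks provide only $m-1\le k-1$ boundaries, the required contradiction.

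The only step demanding some care is the preliminary reduction to distinct $x$-coordinates; the rest is a clean pigeonhole between the $k$ boundaries that are forced and the $k-1$ that are available, so no real obstacle is anticipated.
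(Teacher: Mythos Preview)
Your proof is correct and follows essentially the same approach as the paper's: both construct the forbidden dichotomy by selecting, from each consecutive pair $\{2l-1,2l\}$, the index carrying the larger $u$-value, and then argue that constancy of $f$ on its intervals forces a contradiction. The only difference is cosmetic---the paper phrases the final step as a pigeonhole (``some interval contains three consecutive points, hence a full pair''), whereas you count boundaries (``$k$ pairs need $k$ cuts, only $k-1$ available''), but these are equivalent.
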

Let us apply this to histogram estimation on $\sX=\R$. For a positive integer $D$ we denote by $\overline \cQ_{D}$ the subset of $\sF_{D+2}$ of right-continuous densities with respect to the Lebesgue measure $\mu$, that is the set of right-continuous and piecewise constant densities on $\R$ with at most $D$ pieces and by $\overline \sQ_{D}=\left\{q\cdot \mu,\, q\in \overline \cQ_{D}\right\}$ the corresponding model for $P$.
We derive from Propositions~\ref{cas-VC} and~\ref{prop-CPM} that, for some universal constant $C>0$ and all $\rho$-models $\sbQ\subset \overline \sbQ_{D}$,
\[
D^{{\sbQ}}(\gP,\overline \gP)\le C(D\wedge n)\cro{1+\log_{+}(n/D)}\quad\mbox{for all }\left(\gP,\overline \gP\right)\in \sbP\times \sbP^{\gmu}.
\]
Hence, by Corollary~\ref{cor-VC}, for all $\rho$-estimators $\widehat \gP$ on some countable and $\sbV$-dense subset $\sbQ_{D}$ of $\overline \sbQ_{D}$,
\begin{equation}
C\E\cro{\gh^{2}(\gP,\widehat \gP)}\le \gh^{2}(\gP,\overline{\sbQ}_{D})+(D\wedge n)\cro{1+\log_{+}(n/D)}.
\label{eq-RiskHisto}
\end{equation}
Since $\overline \cQ_{D}$ is universally separable (see Section~\ref{Sect-UnivSep} of the Appendix), we deduce from Theorem~\ref{thm-Exten} that \eref{eq-RiskHisto} remains true for any $\rho$-estimator $\widehat \gP$ on the non-countable model $\overline \sbQ_{D}$ relative to the representation $(\gmu, \overline\cbQ_{D})$ (with a null penalty function).

The logarithmic factor in this bound turns out to be necessary. The argument is as follows. When $\gP\in\overline \sbQ_{D}$, it follows from (\ref{eq-RiskHisto}) that $\E\cro{\gh^{2}(\gP,\widehat \gP)}\le C'(D\wedge n)\cro{1+\log_{+}(n/D)}$
for some universal constant $C'>0$. This inequality appears to be optimal (up to the numerical constant $C'$) in view of the lower bound established in Proposition~2 of Birg\'e and Massart~\citeyearpar{MR1653272}. This also shows that the logarithmic factor involved in the bound of the $\rho$-dimension function established in Proposition~\ref{cas-VC} is necessary, at least for some VC-subgraph classes.

\paragraph{\bf Piecewise exponential families}
Using similar arguments based on Cor\-ollaries~\ref{cor-VC} and~\ref{cor-Extrem} as we did above, we may establish risk bounds of the same flavour as \eref{eq-RiskHisto} with the following density sets.

\begin{df}\label{def-pef}
Let $g_{1},\ldots,g_{J}$ be $J\ge 1$ real-valued functions on a set $\sX$.
We shall say that a class $\sF$ of positive functions on $\sX$ is an exponential family based on $g_{1},\ldots,g_{J}$ if the elements $f$ of $\sF$ are of the form 
\begin{equation}\label{formexp}
f=\exp\cro{\sum_{j=1}^{J}\beta_{j}g_{j}}\ \ \mbox{for}\ \ \beta_{1},\ldots,\beta_{J}\in\R.
\end{equation}
If $\sX$ is a nontrivial interval of $\R$ and $k$ a positive integer, we shall say that $\sF$ is a $k$-piecewise exponential family based on $g_{1},\ldots,g_{J}$ if for any $f\in \sF$ there exists a partition $\sI(f)$ of $\sX$ into at most $k$ intervals such that for all $I\in \sI(f)$, the restriction $f_{I}$ of $f$ to $I$ is of the form~\eref{formexp} with coefficients $\beta_{j}$ depending on $I$.
\end{df}
The properties of exponential and piecewise exponential families are described by the following proposition to be proven in Section~\ref{sect-ppp-dimexp} of the Appendix.
%
\begin{prop}\label{pp-dimexp}
Let $\overline{\cQ}$ be a class of functions on $\sX$.
%
%
\begin{listi}
\item\label{pp-eq1} If $\overline{\cQ}$ is an exponential family based on $J\ge 1$ functions, $\overline{\cQ}$ is VC-subgraph with index not larger than $J+2$. 
\item\label{pp-eq1b} Let $\sI$ be a partition of $\sX$ with cardinality not larger than $k\ge 1$. If for all $I\in\sI$ the family $\overline{\cQ}_{I}$  consisting of the restrictions of the functions $q$ in $\overline{\cQ}$ to the set $I$ is an exponential family on $I$ based on $J\ge 1$ functions, $\overline{\cQ}$ is VC-subgraph with index not larger than $k(J+2)$.
\item\label{pp-eq2} If $\sX$ is a non-trivial interval of \,$\R$ and $\overline{\cQ}$ is a $k$-piecewise exponential family based on $J$ functions, all densities $\overline p\in\overline{\cQ}$ are extremal in $\overline{\cQ}$ with degree $d$ not larger than $\lceil9.4k(J+2)\rceil=\inf\{j\in\N,\,j\ge9.4k(J+2)\}$.
\end{listi}
\end{prop}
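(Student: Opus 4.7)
The plan is to treat the three parts of Proposition~\ref{pp-dimexp} in sequence, with each building on the previous one.

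For (i), I would write each $q\in\cQ$ as $q=\exp\pa{\sum_{j=1}^{J}\beta_{j}g_{j}}$. Its subgraph $\{(x,u)\in\sX\times\R\,:\,q(x)>u\}$ contains the trivial piece $\sX\times(-\infty,0]$, and on $\{u>0\}$ it equals $\{(x,u)\,:\,\sum_{j}\beta_{j}g_{j}(x)-\log u>0\}$. The defining functions lie in the $(J+1)$-dimensional vector space spanned by $g_{1},\ldots,g_{J}$ and $(x,u)\mapsto-\log u$, so the classical bound on positivity sets of a finite-dimensional space of functions (Lemma~2.6.15 in van der Vaart and Wellner, also recalled in Baraud, Birg\'e and Sart, Section~8) yields VC index at most $J+2$. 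For (ii), I would apply (i) to each $\cQ_{I}$ on $I\times\R$ to get VC index at most $J+2$: if $\{(x_{\ell},u_{\ell})\}_{\ell=1}^{n}$ is shattered by the subgraphs of $\cQ$, then for every $I\in\sI$ the subset lying in $I\times\R$ is shattered by the subgraphs of $\cQ_{I}$, hence has cardinality at most $J+1$. Summing over the at most $k$ elements of $\sI$ gives $n\le k(J+1)$, so VC index at most $k(J+1)+1\le k(J+2)$.

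For (iii), fix $\overline p\in\cQ$ with partition $\sI(\overline p)$ of cardinality at most $k$. For any $q\in\cQ$, the common refinement $\sI^{*}=\sI(q)\vee\sI(\overline p)$ has at most $2k-1$ intervals, and on each $I^{*}\in\sI^{*}$ the ratio $(q/\overline p)|_{I^{*}}$ equals $\exp(L_{I^{*}})$ for some $L_{I^{*}}\in\mathrm{span}(g_{1},\ldots,g_{J})$. For $u\le 0$ the level set $\{q/\overline p>u\}$ is all of $\sX$ and therefore trivial; for $u>0$ it decomposes as $\bigsqcup_{I^{*}\in\sI^{*}}\{x\in I^{*}\,:\,L_{I^{*}}(x)>\log u\}$. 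Setting $\sH=\{\{L>\log u\}\,:\,L\in\mathrm{span}(g_{1},\ldots,g_{J})\}$, which is a VC class of dimension at most $J+1$ by the same positivity-set argument as in (i), the class $\sC_{u}(\cQ/\overline p)$ is contained in the class $\sC^{*}$ of sets of the form $\bigsqcup_{I\in\pi}(H_{I}\cap I)$ where $\pi$ is a partition of $\sX$ into at most $2k-1$ intervals and $H_{I}\in\sH$.

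To bound the VC dimension of $\sC^{*}$, I would estimate its shatter coefficient on $n$ ordered points of $\sX$: the partition $\pi$ can separate them in at most $\binom{n}{2k-2}$ ways, and for any separation into groups of sizes $(n_{j})$ the number of realisable subsets is at most $\prod_{j}(en_{j}/(J+1))^{J+1}$ by Sauer--Shelah. Maximising the product over $(n_{j})$ with $\sum_{j}n_{j}=n$ via AM--GM yields
\[
S(\sC^{*},n)\le\binom{n}{2k-2}\pa{\frac{en}{(J+1)(2k-1)}}^{(J+1)(2k-1)}.
\]
Imposing $2^{n}\le S(\sC^{*},n)$ then gives a transcendental inequality; solving it produces the bound $n\le 9.4\,k(J+2)$ (uniformly for $k,J\ge 1$), so the VC dimension of $\sC_{u}(\cQ/\overline p)$ is at most $\lceil 9.4\,k(J+2)\rceil$, which is exactly the weak-VC-major dimension required. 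The main obstacle is the final numerical calibration: extracting the specific constant $9.4$ uniformly in $k$ and $J\ge 1$, and separately handling the degenerate case $k=1$ (where $\sC^{*}=\sH$ and the bound is much stronger). The surrounding combinatorics on partitions, subgraphs and VC classes is standard.
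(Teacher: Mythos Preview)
Your proposal is correct and follows essentially the same route as the paper's proof. For~(i) you argue directly on the subgraphs via positivity sets of a $(J+1)$-dimensional space, whereas the paper first bounds the VC-subgraph index of the linear span $V=\mathrm{span}(g_{1},\ldots,g_{J})$ and then transports it through the monotone map $u\mapsto e^{u}$; the two arguments are equivalent. For~(ii) your direct shattering argument is precisely the content of the lemma the paper cites. For~(iii) the paper does exactly what you outline: it reduces to showing that a $K$-piecewise exponential family is weak VC-major with dimension at most $\lceil 4.7K(J+2)\rceil$ (their Proposition~\ref{prop-VCexp}) and applies this with $K=2k$ (you use the sharper $2k-1$, which is fine). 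The counting is the same Sauer--Shelah plus AM--GM argument, and the constant $4.7$ comes from the elementary fact that $x^{-1}\log(2ex)<\log 2$ for $x>4.7$.

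One small correction: your bound ``$\binom{n}{2k-2}$'' for the number of ways a partition into at most $2k-1$ intervals can separate $n$ ordered points is not quite right (the correct count is $\sum_{j=0}^{2k-2}\binom{n-1}{j}$, which can exceed $\binom{n}{2k-2}$). The paper handles this by summing over the number $L$ of non-empty blocks and bounding $\sum_{L}\binom{n-1}{L-1}\le(en/K)^{K}$; this absorbs into the final estimate without changing the constant, so the slip is harmless once fixed.
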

%

\section{Estimating a conditional distribution}\label{CD}
\subsection{Description of the framework}\label{CD1}
Let us now apply our result to the estimation of a conditional distribution. We consider i.i.d.\ pairs $X_{i}=(W_{i},Y_{i})$, $i=1,\ldots,n$ of random variables with values in the product space $(\sW\times \sY,\sB(\sW)\otimes\sB(\sY))$ and common distribution $P$, assuming that truely $\gP=P^{\otimes n}$. We denote by $P_{W}$ the marginal distribution of $W$ and assume the existence of a conditional distribution $P_{w}$ of $Y$ when $W=w$, which means that for all bounded measurable functions $f$ on $\sY$,
\[
\E[f(Y)\,|\,W=w]=\int_{\sY} f(y)\,dP_{w}(y)\quad P_{W}\mbox{-a.s.}
\]
and for all bounded measurable functions $g$ on $\sW\times \sY$,
\[
\E[g(W,Y)]=\int_{\sW} \cro{\int_{\sY} g(w,y)\,dP_{w}(y)}\,dP_{W}(w).
\]
Our purpose is to estimate the conditional distribution $P_{w}$ without the knowledge of $P_{W}$ which may therefore be completely arbitrary. 
To do so, we consider a reference measure $\lambda$ on $(\sY,\sB(\sY))$ and the set $\cL_{c}(\sW,\lambda)$ of conditional densities with respect to $\lambda$, that is the set of measurable functions $t$  from $(\sW\times\sY, \sB(\sW)\otimes\sB(\sY))$ to $\R_{+}$ such that for all $w\in \sW$, the function $t_{w}:y\mapsto t(w,y)\in\cL(\lambda)$. Then, to each element $t\in\cL_{c}(\sW,\lambda)$ is associated a conditional distribution $t_{w}\cdot \lambda$ for $Y$. In order to build our estimators we first introduce a countable family $\{S_{m},\ m\in\cM\}$ of countable subsets $S_{m}$ of $\cL_{c}(\sW,\lambda)$, and a non-negative weight function $\Delta$ on $\cM$ satisfying (\ref{eq-delta}).
To each $S_{m}$, we associate the $\rho$-density model ${\sQ}_{m}=\{Q_{t},\,t\in S_{m}\}$ for $P$, where the probability $Q_{t}$ on $\sW\times\sY$ is given by
\[
Q_{t}(A\times B)=\int_{A}\cro{\int_{B}t_{w}(y)\,d\lambda(y)}dP_{W}(w)\;\mbox{ i.e. }\;\frac{dQ_{t}}{dP_{W}\otimes d\lambda}(w,y)=t(w,y).
\]
This means that $Q_{t}$ has a marginal distribution $P_{W}$ on $\sW$ and a conditional distribution given $W=w$ with density $t_{w}$ with respect to $\lambda$. Note that the $\rho$-models ${\sQ}_{m}$ depend on the unknown distribution $P_{W}$ but the densities with respect to the dominating measure $P_{W}\otimes \lambda$ do not. This leads to a family of $\rho$-models ${\sbQ}_{m}$ for $\gP$ and a reference $\rho$-model ${\sbQ}=\bigcup_{m\in\cM}{\sbQ}_{m}$. If we introduce a suitable penalty $\gpen$ on ${\sbQ}$, we may build a $\rho$-estimator of $\gP$ from our sample $\etc{X}$ according to the recipe of Section~\ref{ES1} since its values only depend on the family of densities in ${\cQ}=\bigcup_{m\in\cM}S_{m}$.
As a consequence, our estimation strategy neither needs to know $P_{W}$ nor to estimate it. Such a $\rho$-estimator will be of the form $Q_{\widehat{s}}^{\otimes n}$ with $Q_{\widehat{s}}=\widehat{s}\cdot(P_{W}\otimes \lambda)$ and will provide an estimator $\widehat{s}_{w}\cdot \lambda$ of the conditional probability $P_{w}$.

Within this framework, the Hellinger distance between the probabilities at hand writes, for any measure $\nu$ that dominates both $P$ and $P_{W}\otimes \lambda$,
\begin{align*}
&{1\over 2}\int_{\sW\times\sY}\pa{\sqrt{\frac{dP_{W}(w)dP_{w}(y)}{d\nu}}
-\sqrt{t(w,y)\frac{dP_{W}(w)d\lambda(y)}{d\nu}}}^{2}d\nu(w,y)\qquad\\
&\hspace{50mm}= h^{2}(P,Q_{t})=\int_{\sW}h^{2}(P_{w},t_{w}\cdot \lambda)\,dP_{W}(w).
\end{align*}
Therefore
\begin{equation}
h^{2}(P,{\sQ}_{m})=\inf_{t\in S_{m}}\int_{\sW}h^{2}(P_{w},t_{w}\cdot \lambda)\,dP_{W}(w).
\label{eq-loss}
\end{equation}
Note that $h^{2}(P,Q_{t})$ can actually be viewed as a loss function for the conditional distributions, of the form $\ell(P_{w},t_{w}\cdot \lambda)$ since it actually only depends on  $P_{w}$ and $t_{w}$.

\subsection{Assumptions and results}\label{CD2}
Let us assume the following:
\begin{ass}\label{Ass-cd}
For all $m\in \cM$, $S_{m}$ is VC-subgraph with index not larger $\overline V_{m}$.
\end{ass}
We may then deduce from Theorem~\ref{thm-main} the following result.
%
\begin{cor}\label{cor-conddens}
Let $\{S_{m},\ m\in\cM\}$ be a family of countable subsets of $\cL_{c}(\sW,\lambda)$ satisfying Assumption~\ref{Ass-cd},  $\Delta$ be a weight function on $\cM$ which satisfies~\eref{eq-delta}, $\psi$ a function satisfying Assumption~\ref{H-debase}, ${\sQ}=\bigcup_{m\in\cM}\{Q_{t},\,t\in S_{m}\}$ and $\pen: {\sQ}\to \R_{+}$ given, for all $Q\in{\sQ}$, by 
\[
\pen(Q)=\kappa\inf_{\{m\in\cM,\,Q=Q_{t}\;{\rm with}\;t\in  S_{m}\}}\!\cro{{C_{1}\over4.7}(\overline V_{m}\wedge n)\!\cro{1+\log_{+}\left(\frac{n}{\overline V_{m}}\right)}+\Delta(m)}
\]
where $\kappa$ is given by \eref{cond-kappa} and $C_{1}$ is the constant appearing in Proposition~\ref{cas-VC}. Then any density $\rho$-estimator $Q_{\widehat s}$ relative to $((\mu,\cQ),\pen)$ satisfies, for some constant $C'>0$ depending on the choice of $\psi$ only,
\begin{align*}
\lefteqn{\E\cro{h^{2}(P,Q_{\widehat{s}})}}\hspace{10mm}\\&\le C'\inf_{m\in\cM}\cro{h^{2}(P,{\sQ}_{m})+{\overline V_{m}\wedge n\over n}\pa{1+\log_{+}\left(\frac{n}{\overline V_{m}}\right)}+{\Delta(m)\over n}}.
\end{align*}
\end{cor}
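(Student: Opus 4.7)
The plan is to derive the corollary as a direct application of Theorem~\ref{thm-main} in the i.i.d.\ setting $\gP = P^{\otimes n}$, by turning each conditional-density model $S_{m}$ into a $\rho$-model for $\gP$ whose dimension function is uniformly controlled via the VC hypothesis.

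First I set up the collection of $\rho$-models. Since the $X_{i}$ are i.i.d., $\gP = P^{\otimes n}$, and each $\sQ_{m} = \{Q_{t} : t \in S_{m}\}$ gives rise to a $\rho$-model $\sbQ_{m} = \{Q^{\otimes n} : Q \in \sQ_{m}\}$ for $\gP$. A natural representation uses the reference measure $\gmu = (P_{W}\otimes\lambda)^{\otimes n}$ together with the density family $\cbQ_{m} = \{(t,\ldots,t) : t \in S_{m}\}$. Crucially, these densities depend only on $t$ and not on the unknown $P_{W}$, so the resulting $\rho$-estimator is computable from the sample alone. The weights $\Delta(\sbQ_{m}) := \Delta(S_{m})$ then satisfy~\eref{eq-delta} thanks to~\eref{eq-delta'}.

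The key technical step is to control the dimension functions $D^{\sbQ_{m}}$ uniformly. Assumption~\ref{Ass-cd} asserts that each $S_{m}$, viewed as a family of joint densities with respect to $P_{W}\otimes\lambda$, is VC-subgraph on $\sW\times\sY$ with index at most $\overline V_{m}$, and Proposition~\ref{cas-VCd} transfers this to the product class $\cbQ_{m}$ on $\overline\sX$ with the same index bound. When $\overline V_{m} \le n$, Proposition~\ref{cas-VC} gives the uniform estimate
\[
D^{\sbQ_{m}}(\gP,\overline\gP) \le C_{1}\overline V_{m}\!\left[1+\log_{+}\!\left(n/\overline V_{m}\right)\right] =: \overline D_{m}
\]
for all $\gP\in\sbP$ and $\overline\gP\in\sbQ_{m}$. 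The residual case $\overline V_{m} > n$ is covered by the rough bound~\eref{eq-RB}, namely $D^{\sbQ_{m}} \le n/6$, which is again of the same order as $\overline V_{m}[1+\log_{+}(n/\overline V_{m})]$. Hence a uniform upper bound of the form $D^{\sbQ_{m}}(\gP,\overline\gP) \le C\overline D_{m}$ holds in all cases, with the universal constant absorbed into the final $C$.

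With these ingredients, the penalty prescribed in the corollary satisfies hypothesis~\eref{eq-L} with $G \equiv 0$. This is exactly the configuration of Section~\ref{DM3c}, so that~\eref{eq-risk3} applies and yields, with probability at least $1-e^{-\xi}$,
\[
\gh^{2}(\gP,\widehat\gP) \le \inf_{m\in\cM}\left\{\gamma\,\gh^{2}(\gP,\sbQ_{m}) + \frac{4\kappa}{a_{1}}\!\left[\frac{\overline D_{m}}{4.7} + \Delta(S_{m}) + 1.5 + \xi\right]\right\}.
\]
Using the i.i.d.\ identities $\gh^{2}(\gP,\widehat\gP) = n\,h^{2}(P,Q_{\widehat s})$ and $\gh^{2}(\gP,\sbQ_{m}) = n\,h^{2}(P,\sQ_{m})$ (the latter via~\eref{eq-loss}), dividing through by $n$ and absorbing $\psi$-dependent numerical constants into $C$ produces the stated inequality. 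I do not expect any deep obstacle; the most delicate point is the bookkeeping transfer of the VC-subgraph property from $S_{m}$ on $\sW\times\sY$ to $\cbQ_{m}$ on $\overline\sX$ via Proposition~\ref{cas-VCd}, and gracefully handling the borderline case $\overline V_{m} > n$ with~\eref{eq-RB}.
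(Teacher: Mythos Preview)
Your proposal is correct and follows essentially the same route as the paper's proof: transfer the VC-subgraph property from $S_{m}$ to $\cbQ_{m}$ via Proposition~\ref{cas-VCd}, bound the dimension function through Proposition~\ref{cas-VC}, and conclude with~\eref{eq-risk3}. You are in fact slightly more careful than the paper in explicitly treating the borderline case $\overline V_{m}>n$ via~\eref{eq-RB}, which the paper's terse proof leaves implicit.
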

Note that this result does not require any information or assumption on the distribution of $W$. If, in particular, the conditional probability $P_{w}$ is absolutely continuous with respect to $\lambda$ for almost all $w$ with density $dP_{w}/d\lambda=s_{w}$, $P_{W}$-a.s., one can write 
\[
\left\{
\begin{array}{lll}  
h^{2}(P,Q_{\widehat{s}})&=&\dps{\int_{\sW}h^{2}(s_{w}\cdot \lambda,\widehat{s}_{w}\cdot \lambda)\,dP_{W}(w)},\\
h^{2}(P,{\sQ}_{m})&=&\dps{\inf_{t\in S_{m}}\int_{\sW}h^{2}(s_{w}\cdot \lambda,t_{w}\cdot \lambda)\,dP_{W}(w).}\end{array}\right.
\]
\begin{proof}[Proof of Corollary~\ref{cor-conddens}]
Applying Propositions~\ref{cas-VC} and~\ref{cas-VCd} to each $\rho$-model ${\sbQ}_{m}$ with $m\in\cM$, we obtain under Assumption~\ref{Ass-cd} the existence of a universal constant $C_{1}>0$ such that for all $\left(\gP,\overline \gP\right)\in\sbP\times \sbP^{\gmu}$
\[
D^{{\sbQ}_{m}}(\gP,\overline \gP)\le D_{n}(m)=C_{1}(\overline V_{m}\wedge n)\cro{1+\log_{+}(n/\overline V_{m})}.
\]
Inequality \eref{eq-L4} is fulfilled with $K=0$ and the penalty function therefore satisfies~\eref{eq-L3} with $\kappa_{1}=0$ for all $m\in\cM$. The result follows from Theorem~\ref{thm-main4}, then an integration of \eref{eq-Th2-Prob} with respect to $\xi>0$.
\end{proof}
\section{Regression with a random design}\label{RS}
In this section we assume that the observations $X_{i}=(W_{i},Y_{i})$, $1\le i\le n$ are i.i.d.\ copies of a random pair
\begin{equation}\label{mod-reg}
X=(W,Y)\qquad\mbox{with}\qquad Y=f(W)+\ee,
\end{equation}
where $W$ is a random variable with distribution $P_{W}$ on a measurable space $(\sW,\sB(\sW))$, $f$ is an unknown regression function  mapping $\sW$ into $\R$ and $\ee$ is  a real-valued random variable with distribution $P_{\ee}$, which is independent of $W$. Both distributions $P_{W}$ and $P_{\ee}$ are assumed to be unknown. We shall use the specific notations introduced in Section~\ref{Dens} when the data are i.i.d.\ and denote by $\mu$ the product measure $P_{W}\otimes \lambda$ where $\lambda$ is the Lebesgue mesure on $\R$. Note that $\mu$ is unknown since it depends on the distribution $P_{W}$ of the design $W$. 

If $\ee$ had a density $s$ with respect to $\lambda$, the distribution $P$ of $X=(W,Y)$ would be absolutely continuous with respect to $\mu$ with density $p$ given by 
\begin{equation}\label{def-s}
p(w,y)=s(y-f(w))\quad\mbox{for }(w,y)\in\sX,
\end{equation}
depending thus on two parameters: the density $s$ of the errors and the regression function $f$. 

Denoting by $\sD$ the set of all densities on $(\R,\sB(\R),\lambda)$ and $\sF$ the set of all measurable functions mapping $\sW$ into $\R$, 
our aim is to estimate $P$ assuming that it is close to some distribution of the form $p\cdot \mu$ with $p$ given by~\eref{def-s} for some $s\in\sD$ and $f\in \sF$. Besides, when $P$ is truly of this form we shall also derive estimators for both $s$ and $f$.
\subsection{The main result}\label{RS1}
For $r\in\sD$ and $g\in\sF$, we set
\[
Q_{r,g}=q_{r,g}\cdot \mu\quad\mbox{ with }\quad q_{r,g}(w,y)=r(y-g(w)),
\]
which means that $Q_{r,g}$ is the distribution of $X$ in~\eref{mod-reg} when $f=g$ and $\ee$ is distributed according to $R=r\cdot \lambda$. Given a density $r\in\sD$ and a countable subset  $F$ of $\sF$, we define the $\rho$-model
\[
{\sQ}_{m}=\{Q_{r,g},\ g\in F\}\quad\mbox{for }m=(r,F).
\]
Given a countable subset $\cD$ of $\sD$ and a countable family $\F$ of countable subsets  $F$ of $\sF$, we estimate $P$ on the basis of the collection of $\rho$-models $\{{\sQ}_{m},\ m\in\cM\}$ with $\cM\subset \cD\times \F$. We endow the family $\{{\sbQ}_{m},\ m\in\cM\}$ with a weight function $\Delta$ satisfying~\eref{eq-delta} and assume the following.
%
\begin{ass}\label{ass-1}\mbox{}\vspace{-2mm}
\begin{listi}
\item\label{i-ass1} The densities $r\in \cD$ are unimodal.
\item\label{i-ass2} Each $F$ in $\F$ is VC-subgraph with index $\overline V(F)$.
\item\label{i-ass3} The function $\psi$ satisfies Assumption~\ref{H-debase} with ${\sbQ}=\{Q^{\otimes n},\;Q\in{\sQ}=\bigcup_{m\in\cM}{\sQ}_{m}\}$.
\end{listi}
\end{ass}
Under Assumptions~\ref{ass-1}-$\ref{i-ass1})$ and~\ref{ass-1}-$\ref{i-ass2})$, the family of densities ${\cQ}_{m}$ is VC-subgraph on $\sX$ with index not larger than
\begin{equation}
\overline V_{m}=9.41\overline V(F)\quad\mbox{for all }m=(r,F)\in\cM.
\label{eq-barVm}
\end{equation}
This result derives from Baraud {\em et al.}~\citeyearpar[Proposition~42]{MR3595933}. Besides, under Assumption~\ref{ass-1}-$\ref{i-ass3})$, Proposition~\ref{cas-VC} applies and implies that, for some universal constant $C_{1}>0$, all $m\in\cM$, $\gP\in\sbP$ and $\overline \gP\in {\sbQ}$, $D^{{\sbQ}_{m}}(\gP,\overline \gP)\le D_{n}(m)$ with 
\begin{equation}
D_{n}(m)=C_{1}(\overline V_{m}\wedge n)\cro{1+\log_{+}\!\pa{n/\overline V_{m}}}\quad\mbox{for all }m=(r,F)\in\cM,
\label{def-Dm}
\end{equation}
so that \eref{eq-L4} holds with $K=0$. Setting
\[
\gpen(\gQ)=\kappa\inf_{\{m\in\cM\,|\,{\sQ}_{m}\ni Q\}}\ac{{ D_{n}(m)\over4.7}+\Delta(m)},
\]
we may apply Theorem~\ref{thm-main4} with $\kappa_{1}=0$, which leads, in this particular case, to the following analogue of (\ref{eq-Th2-Prob}).
\begin{thm}\label{thm-main2}
Assume that Assumption~\ref{ass-1} holds. For any distribution $P\in\sP$ and $\gP=P^{\otimes n}$, any $\rho$-estimator $\widehat \gP=(\widehat P,\ldots,\widehat P)$ satisfies, for all $\xi>0$, with probability at least $1-e^{-\xi}$,
\begin{align}
\lefteqn{Ch^{2}(P,\widehat P)}\hspace{10mm}\nonumber \\
&\le  \inf_{m\in\cM} \cro{h^{2}(P,{\sQ}_{m})+{\overline V_{m}\wedge n\over n}\cro{1+\log_{+}\!\pa{\frac{n}{\overline V_{m}}}}+{\Delta(m)\over n}}+{\xi\over n},\label{eq-risk3r}
\end{align}
for some constant $C>0$ only depending on the choice of $\psi$.
\end{thm}
At this stage, some comments are in order.
\begin{enumerate}[label=\alph*),ref=\roman*,leftmargin=\parindent]
\item This result holds without any assumption on the distribution $P_W$ of the design.  
\item The result is true even if the regression framework~\eref{mod-reg} is not exact as long as the $X_{i}$ are i.i.d. In particular, the distribution $P$ needs not have a density with respect to $\mu=P_{W}\otimes \lambda$.
\item If $r$ admits $k$ modes with $k> 1$ and $F$ is VC-subgraph with index not larger than $\overline V$, ${\sQ}_{(r,F)}$ remains VC-subgraph and its index is still bounded by $C(k)\overline V$ for some constant $C(k)$ that now depends on $k$. Consequently the above result generalizes to families $\cD$ of densities admitting more than a single mode in which case $\overline V(F)$ should be replaced by $c(r)\overline V(F)$ where $c(r)$ is a positive number depending on the number of modes of the density $r$.
\item With Theorem~\ref{thm-main2} at hand we could obtain in the present random design context an analogue  of Corollary~39 in Baraud {\em et al.}~\citeyearpar{MR3595933} which was established when the $W_{i}$ were deterministic (fixed design regression).
\end{enumerate}

\subsection{Estimation of \texorpdfstring{$s$}{Lg} and \texorpdfstring{$f$}{Lg}}
Let us now consider the situation where the regression framework~\eref{mod-reg} is exact and $\ee$ has an unknown density $s$ with respect to the Lebesgue measure $\lambda$. Then $P=Q_{s,f}$ admits a density $q_{s,f}$ with respect to $\mu$ which is given by~\eref{def-s} with $s$ belonging to 
$\sD$ and $f$ to $\sF$ but not necessarily to our $\rho$-models $\cD$ and $\cF=\bigcup_{F\in\F}F$ respectively. Since we may choose our $\rho$-estimator of the form 
\[
\widehat P= q_{\widehat s,\widehat f}\cdot \mu\quad\mbox{with}\quad(\widehat s,\widehat f)\in\cD\times\cF,
\] 
our procedure results in estimators $\widehat s$ and $\widehat f$ for $s$ and $f$ respectively and our aim in this section is to establish risk bounds for these two estimators. 

Since the map $(r,g)\mapsto Q_{r,g}$ is not necessarily one-to-one from $\sD\times \sF$ to $\sP$ an identifiability condition is required on our $\rho$-model ${\sQ}$ so that the equality $Q_{r,g}=Q_{r',g'}$ with $r,r'\in\cD$ and $g,g'\in\cF$ implies that $r=r'$ $\lambda$-a.e.\ and $g=g'$ $P_{W}$-a.s. In order to state this identifiability condition, let us introduce the following notation. 
For $r\in\sD$ and $a\in\R$, we shall denote by $R_{a}$ the probability on $(\R,\sB(\R),\lambda)$ with density $r_{a}(\cdot)=r(\cdot-a)$. When $\ee$ has density $r$ and $a=g(w)$ for some $w\in\sW$, $R_{a}$ can be viewed as the conditional distribution of $Y=g(W)+\ee$ given $W=w$. Given $r,r'\in\sD$,  $g,g'\in\sF$ and $w\in\sW$, the Hellinger distance between the probabilities $R_{g(w)}$, and $R'_{g'(w)}$ is given by
\[
h^{2}\!\left(R_{g(w)},R'_{g'(w)}\right)={1\over 2}\int_{\R}
\cro{\sqrt{r\pa{y-g(w)}}-\sqrt{r'\pa{y-g'(w)}}}^{2}d\lambda(y)
\]
and the Hellinger distance between the corresponding probabilities $Q_{r,g}$ and $Q_{r',g'}$ on $(\sX,\sB)$ writes 
\begin{equation}\label{eq-HQ}
h^{2}\!\left(Q_{r,g},Q_{r',g'}\right)=\int_{\sW}h^{2}\!\pa{R_{g(w)},R'_{g'(w)}}dP_{W}(w).
\end{equation}
We recall that the Hellinger distance is translation invariant which means that for all densities $r,r' \in\sD$, $a,a'\in\R$,
\begin{equation}\label{eq-hel}
h^{2}(R_{a},R'_{a'})=h^{2}(R_{a-a'},R').
\end{equation}
In particular, taking $a=g(w)$ and $a'=g'(w)$ for $g,g'\in\sF$ and $w\in\sW$ and integrating~\eref{eq-hel} with respect to $P_{W}$ we get for all $(g,g')\in\sF^{2}$ and $(r,r')\in\sD^{2}$ 
\begin{equation}\label{eq-hel2}
h^{2}\!\left(Q_{r,g},Q_{r',g'}\right)=h^{2}\!\left(Q_{r,g-g'},Q_{r',0}\right).
\end{equation}
In order to warrant identifiability, we assume the following.
\begin{ass}\label{ass-2}
There exists a positive constant $A$ such that, for all $r,r'\in\cD$, $R=r\cdot \lambda$ and $R'=r'\cdot \lambda$,
\[
h(R,R')\le A\inf_{a\in\R}h(R_{a},R').
\]
\end{ass}
When $Q_{r,g}=Q_{r',g'}$,~\eref{eq-HQ} asserts that $h\!\left(R_{g(w)},R'_{g'(w)}\right)=0$ for $P_{W}$-almost all $w\in\sW$ and~\eref{eq-hel} implies that $h\!\left(R_{g(w)-g'(w)},R'\right)=0$ for all such $w\in\sW$.  Applying Assumption~\ref{ass-2} with $a=g(w)-g'(w)$ leads to $R=R'$ and $g(w)=g'(w)$ which solves our identifiability problem.

In order to evaluate the risk of our estimator $\widehat f$ of $f$, we endow $\sF$ with the loss function $d_{s}$ defined on $\sF\times\sF$ by 
\[
d_{s}^{2}(g,g')={1\over 2}\int_{\sW\times \R}\pa{\sqrt{s_{g(w)}(y)}-\sqrt{s_{g'(w)}(y)}}^{2}dP_{W}(w)\,dy\quad\mbox{for }g,g'\in \sF.
\]
This loss function depends on the true density $s$ of the errors $\ee$ and on the distribution $P_{W}$ of the design, hence on $P$. We have seen in Section~6.3 of Baraud {\em et al.}~\citeyearpar{MR3595933} that, if the density $s$ is of order $\alpha$ with $\alpha\in (-1,1]$ (see Definition~26 of that paper for the order of a function),
the restriction of $d_{s}$ to the $\L_{\infty}(P_{W})$-ball $\sB_{\infty}(b)$ centred at 0 with radius $b$ is equivalent (up to factors depending on $b$ and $s$) to 
\[
\norm{g-g'}_{1+\alpha,P_{W}}^{(1+\alpha)/2}\quad\mbox{ with }\quad\norm{g-g'}_{1+\alpha,P_{W}}=\cro{\int_{\sW}\ab{g-g'}^{1+\alpha}dP_{W}}^{1/(1+\alpha)}.
\]
In particular, if $\cF\subset \sB_{\infty}(b)$ and the true regression function $f$ also belongs to $\sB_{\infty}(b)$,
\[
c(s,b)\norm{f-g}_{1+\alpha,P_{W}}^{(1+\alpha)/2}\le d_{s}(f,g)\le C(s,b)\norm{f-g}_{1+\alpha,P_{W}}^{(1+\alpha)/2}\quad\mbox{for all }g\in\cF
\]
and suitable positive numbers $c(s,b)$ and $C(s,b)$. Of special interest is the case of $\alpha=1$ for which $d_{s}(f,g)$ is of the order of the $\L_{2}(P_{W})$-distance between $f$ and $g$ for all $g\in\cF$. This situation is met when the translation $\rho$-model associated to $s$ is regular which is the case when $s$ is Cauchy, Gaussian, Laplace, etc. When $s$ is uniform or exponential, $d_{s}^{2}(\cdot,\cdot)$ is then equivalent to the $\L_{1}(P_{W})$-norm. Furthermore, when $\cF$ is a subset of a 
$k$-dimensional linear space $\overline \cF$ generated by $\phi_{1},\ldots,\phi_{k}$, these norms can in turn be translated into a norm on $\R^{k}$ between the coefficients relative to this basis. More precisely, if $f$ belongs to $\overline {\cF}$ and writes as $\sum_{j=1}^{k}\beta_{j}\phi_{j}$ and $\overline f=\sum_{j=1}^{k}\overline \beta_{j}\phi_{j}$ is an element of $\overline {\cF}$, there exist two positive constants $c'(s,b,k)$ and $C'(s,b,k)$ such that
\begin{align*}
\lefteqn{c'(s,b,k) \cro{\max_{j=1,\ldots,k}\ab{\beta_{j}-\overline \beta_{j}}}^{(1+\alpha)/2}}\hspace{35mm}\\&\le d_{s}(f,\overline f) \le C'(s,b,k) \cro{\max_{j=1,\ldots,k}\ab{\beta_{j}-\overline \beta_{j}}}^{(1+\alpha)/2}.
\end{align*}
In particular, if $\overline f=\overline f(n)$ converges toward $f$ at a rate $v_{n}$ with respect to the distance $d_{s}(\cdot,\cdot)$, the coefficients of $\overline f$ converge in sup-norm toward those of $f$ at rate $v_{n}^{2/(1+\alpha)}$, this latter rate being faster than $v_{n}$ when $\alpha<1$.

In view of evaluating the risk of our estimator $\widehat s$ of the density $s$ we shall consider the loss between two densities $r,r'\in\sD$ induced by the Hellinger distance between the two corresponding measures $r\cdot \lambda$ and $r'\cdot \lambda$ and we shall write this loss $h(r,r')$ so that
\[
h(r,r')=h(r\cdot \lambda,r'\cdot \lambda)=h\!\left(Q_{r,0},Q_{r',0}\right)\quad\mbox{for all }r,r'\in \sD. 
\]
We deduce from Theorem~\ref{thm-main2} the following result.
%
\begin{cor}\label{cor-reg}
Assume that the $X_{i}$ are i.i.d.\ with density $p$ given by~\eref{def-s} and that 
Assumptions~\ref{ass-1} and~\ref{ass-2} are satisfied. For all $\xi>0$ and all $\rho$-estimators $Q_{\widehat s,\widehat f}$, with $\widehat s\in\cD$ and $\widehat f\in \cF$, based on the family of $\rho$-models $\sQ_{m}$ defined in Section~\ref{RS1}, with probability at least $1-e^{-\xi}$,
\begin{align*}
\lefteqn{C\max\ac{d_{s}^{2}(f,\widehat f),h^{2}(s,\widehat s)}}\hspace{5mm}\\
&\le\inf\ac{d_{s}^{2}(f,F)+h^{2}(s,r)+{\overline V_{m}\wedge n\over n}\cro{1+\log_{+}\!\pa{\frac{n}{\overline V_{m}}}}+{\Delta(m)\over n}}+{\xi\over n},
\end{align*}
where the infimum runs among all $m=(r,F)\in\cM$, $C$ is a positive constant depending on $A$ and the choice of $\psi$ and $\overline{V}_{m}$ is given by (\ref{eq-barVm}).
\end{cor}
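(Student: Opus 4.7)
The plan is to combine the risk bound from Theorem~\ref{thm-main2} with two translation-invariance calculations: one to give an \emph{upper} bound on $h^{2}(P,\sQ_{m})$ in terms of $d_{s}^{2}(f,F)$ and $h^{2}(s,r)$, and one to give a \emph{lower} bound on $h^{2}(P,\widehat{P})$ in terms of $d_{s}^{2}(f,\widehat{f})$ and $h^{2}(s,\widehat{s})$. The identifiability Assumption~\ref{ass-2} will be the crucial ingredient for the lower bound.

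\textbf{Step 1 (upper bound on the bias).} Fix $(r,F)\in\cM$ and $g\in F$. Starting from~\eref{eq-HQ}, the triangle inequality for $h$ gives, for every $w\in\sW$,
\[
h\!\left(S_{f(w)},R_{g(w)}\right)\le h\!\left(S_{f(w)},S_{g(w)}\right)+h\!\left(S_{g(w)},R_{g(w)}\right),
\]
and by translation invariance~\eref{eq-hel} the second term equals $h(s,r)$. Squaring, integrating against $P_{W}$, using $(a+b)^{2}\le 2a^{2}+2b^{2}$ and taking the infimum over $g\in F$ yields
\[
h^{2}(P,\sQ_{(r,F)})\le 2d_{s}^{2}(f,F)+2h^{2}(s,r).
\]

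\textbf{Step 2 (lower bound via Assumption~\ref{ass-2}).} Writing $P=Q_{s,f}$ and $\widehat P=Q_{\widehat s,\widehat f}$, the identity~\eref{eq-HQ} and translation invariance give
\[
h^{2}(P,\widehat P)=\int_{\sW}h^{2}\!\left(S_{f(w)-\widehat f(w)},\widehat S\right)dP_{W}(w).
\]
Assumption~\ref{ass-2} implies $h^{2}(s,\widehat s)=h^{2}(S,\widehat S)\le A^{2}h^{2}\!\left(S_{f(w)-\widehat f(w)},\widehat S\right)$ for $P_{W}$-almost every $w$; integrating gives $h^{2}(s,\widehat s)\le A^{2}h^{2}(P,\widehat P)$. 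For the regression part, the triangle inequality applied at each $w$ gives
\[
h^{2}\!\left(S_{f(w)},S_{\widehat f(w)}\right)\le 2h^{2}\!\left(S_{f(w)},\widehat S_{\widehat f(w)}\right)+2h^{2}(\widehat S,S),
\]
which upon integration becomes $d_{s}^{2}(f,\widehat f)\le 2h^{2}(P,\widehat P)+2h^{2}(s,\widehat s)\le 2(1+A^{2})h^{2}(P,\widehat P)$. Hence
\[
\max\!\left\{d_{s}^{2}(f,\widehat f),\,h^{2}(s,\widehat s)\right\}\le 2(1+A^{2})\,h^{2}(P,\widehat P).
\]

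\textbf{Step 3 (conclusion).} Theorem~\ref{thm-main2} (whose hypotheses are exactly those assumed here) gives, with probability at least $1-e^{-\xi}$,
\[
h^{2}(P,\widehat P)\le\inf_{m\in\cM}\left[\gamma h^{2}(P,\sQ_{m})+\frac{4\kappa}{na_{1}}\!\left(\overline D(\sbQ_{m})+\Delta(\sbQ_{m})+1.5+\xi\right)\right].
\]
Substituting the Step~1 bound into the right-hand side and using the Step~2 lower bound gives the desired inequality with $C=2(1+A^{2})\bigl(2\gamma\vee(4\kappa/a_{1})\bigr)$ (absorbing the $1.5$ into $\xi$ up to a constant), which depends only on $A$ and on the constants $a_{0},a_{1},a_{2}$ attached to $\psi$.

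The only delicate step is Step~2: without Assumption~\ref{ass-2} one can only control $h^{2}(P,\widehat P)$ through the \emph{shifted} distance $h^{2}(S_{f(w)-\widehat f(w)},\widehat S)$, which does not separately identify the noise density and the regression. Assumption~\ref{ass-2} is precisely what allows us to decouple the two, and everything else is triangle inequality plus the translation invariance~\eref{eq-hel} of the Hellinger metric.
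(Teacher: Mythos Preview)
Your Step~2 has a genuine gap. Assumption~\ref{ass-2} is stated only for $r,r'\in\cD$, but you apply it with $r=s$, the \emph{true} error density. The corollary does not assume $s\in\cD$ (that is precisely why the bias term $h^{2}(s,r)$ appears in the conclusion), so the inequality $h^{2}(s,\widehat s)\le A^{2}h^{2}(S_{f(w)-\widehat f(w)},\widehat S)$ is not available. The same problem infects your bound on $d_{s}^{2}(f,\widehat f)$, which relies on the incorrect bound for $h^{2}(s,\widehat s)$.

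The paper's proof avoids this by routing through a pivot $r\in\cD$: from $h(s,\widehat s)\le h(s,r)+h(r,\widehat s)$ one applies Assumption~\ref{ass-2} only to the second term (both $r,\widehat s\in\cD$), obtaining $h(r,\widehat s)\le A\,h(Q_{r,f},Q_{\widehat s,\widehat f})$, and then unwinds via the triangle inequality and translation invariance to reach $h(s,\widehat s)\le(1+A)[h(s,r)+h(P,\widehat P)]$. An analogous chain gives $d_{s}(f,\widehat f)\le(1+A)[h(s,r)+h(P,\widehat P)]$. These bounds carry the extra $h(s,r)$ term, but since $h^{2}(s,r)$ already sits on the right-hand side of the final inequality (from your Step~1), the conclusion is unchanged. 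Your overall structure (bias bound, Hellinger-to-parameter bound, then Theorem~\ref{thm-main2}) is right; only the pivot argument in Step~2 needs to be inserted.
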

The risk bound is the same for the two estimators and depends on the approximation properties of $\cF$ and $\cD$ with respect to $f$ and $s$ respectively. The proof of this corollary is given in Section~\ref{sect-pcor-reg} of the Appendix.

\section{Estimator selection and aggregation}\label{S-ES-A}
In the case of density estimation, $\rho$-estimators can also be used to perform selection or aggregation of preliminary estimators. In this case, we assume that we have at hand a set $\bsX_{1}=(\etc X)$ of $n$ independent random variables with an unknown joint distribution $\gP$ to be estimated. We also have at hand a finite family ${\sbQ}=\left\{\gP_{j},\,j\in\cJ\right\}$ of probabilities that can be considered as candidate estimators for $\gP$. These are completely arbitrary but, in a typical situation, it is assumed (although this may not be true) that the observations $X_{i}$ are i.i.d.\ and the $\gP_{j}$ are preliminary estimators of the form $\gP_{j}=P_{j}^{\otimes n}(\bsX_{2})$, where $\bsX_{2}$ is a second sample independent from $\bsX_{1}$, and the ${P}_{j}=P_{j}(\bsX_{2})$ are estimators that derive from various  procedures applied to the sample $\bsX_{2}$.

\subsection{Estimator selection}\label{S1}
Taking $\cM=\cJ$, we view each probability $\gP_{j}$ as a $\rho$-model ${\sbQ}_{j}=\ac{\gP_{j}}$ with a single element. As a consequence, it follows from Proposition~\ref{cas-fini} that $D^{{\sbQ}_{j}}(\gP,\gP_{j})\le9\log2< 6.3$ so that (\ref{eq-L4}) holds with $D_{n}(j)=6.3$ for all $j$ and $K=0$. Then we choose the weights $\Delta(j)$ satisfying (\ref{eq-delta}). We may choose $\Delta(j)=\log |\cJ|$ for all $j\in\cJ$ but other more Bayesian choices are possible, or choices based on the confidence we have in the various procedures used to build the preliminary estimators. To compute the penalized $\rho$-estimator $\widehat{\gP}$ of $\gP$, we may use the penalty function $\gpen\left(\gP_{j}\right)=\kappa \Delta(j)$ for all $j\in\cJ$ which is of the form~\eref{eq-L3} with $\kappa_{1}=-\kappa(6.3/4.7)$. Finally (\ref{eq-Th2-Prob}) shows that, for all $\xi>0$ and $\gP\in\sbP$, 
\[
\P\left[\gh^{2}(\gP,\widehat \gP)\le C\inf_{j\in\cJ}\left[\gh^{2}(\gP,\gP_{j})+\Delta(j)+1+\xi\right]\right]\ge1-e^{-\xi},
\]
where $C$ denotes a suitable constant depending on $\psi$ only.


\subsection{Convex estimator aggregation}\label{S2}
In this case we set $\cJ=\{1,\ldots,N\}$, $N\ge2$ and $\sC$ for the $N$-dimensional simplex:
\[
\sC=\left\{(\Etc{\alpha}{N})\mbox{ such that }\alpha_{j}\ge0\mbox{ for }1\le j\le N\mbox{ and }\sum_{j=1}^{N}\alpha_{j}=1\right\}.
\]
We select a dominating measure $\mu$, densities $p_{j}=dP_{j}/d\mu$ and we then consider a single density model 
\[
\overline{\cQ}=\left\{\sum_{j=1}^{N}\alpha_{j}p_{j}\quad\mbox{for}\quad(\Etc{\alpha}{N})\in\sC\right\}.
\]
The following result then holds.
%
\begin{prop}\label{prop-agreg}
Let $\psi$ satisfy Assumption~\ref{H-debase}. Any $\rho$-estimator $\widehat \gP$ on $\overline \sbQ=\{\gq\cdot \gmu,\; \gq\in \overline \cbQ\}$ relative to $((\gmu, \overline \cbQ),\bf{0})$ satisfies
\[
\P\cro{Ch^{2}(\gP,\widehat \gP)\le h^{2}(\gP,\overline \sbQ)+{N\log n}+{\xi}}\ge 1-e^{-\xi},\quad \mbox{for all $\xi>0$}
\]
and some constant $C>0$ depending on $\psi$ only.
\end{prop}
%
\begin{proof}
The map $(\alpha_{1},\ldots,\alpha_{n})\mapsto \sum_{j=1}^{N}\alpha_{j}p_{j}$ from $\sC$ to $\overline{\cQ}$ is continuous if we equip $\sC$ with the usual Euclidean distance and $\overline{\cQ}$ with the topology of pointwise convergence. Since $\sC$ is separable, 
$\overline{\cQ}$ is universally separable and contains thus a countable subset $\cQ$ which is $\sbT$-dense in $\overline \cQ$. The set $\overline \cQ$ being furthermore a subset of an $N$-dimensional linear space, it is VC-subgraph with
index $\overline{V}$ not larger than $N+2$ and it follows from Proposition~\ref{cas-VC} (and Proposition~\ref{cas-VCd}) that, for all $\rho$-models $\sbQ\subset \overline \sbQ$ 
\[
D^{{\sbQ}}(\gP,\overline \gP)\le D_{n}=CN\log n\quad\mbox{for all }\left(\gP,\overline \gP\right)\in\sbP\times\sbP^{\gmu}.
\]
We may apply Theorem~\ref{thm-Exten} with $p\equiv 0$ to the single model $\overline \sbQ$. A $\rho$-estimator $\widehat \gP$ on $\overline \sbQ$ relative to $((\gmu,\overline \cbQ),{\bf 0})$ is also a $\rho$-estimator on the $\rho$-model $\sbQ=\{\gq\cdot \gmu,\; \gq\in \cbQ\}$ and we deduce from Theorem~\ref{thm-main1}, more precisely from \eref{eq-risk1b}, that 
\[
\P\cro{C\gh^{2}(\gP,\widehat \gP)\le \gh^{2}(\gP,\sbQ)+N\log n+\xi}\ge 1-e^{-\xi},\quad \mbox{for all $\xi>0$},
\]
some constant $C$ depending on $\psi$ only and all $\gP\in \sbP$.
We conclude using the fact that $\sbQ$ is $\sbV$-dense in $\overline \sbQ$ since $\cQ$ is $\sbT$-dense in $\overline \cQ$. 
\end{proof}
It should be noted that there is no $\L_{2}$-type argument here, the densities $p_{j}$ can be absolutely anything and the true distribution $\gP$ should be a product measure but not necessarily of the form $P^{\otimes n}$. 

\paragraph{{\bf Practical implementation}.} Since the set $\overline{\cQ}$ is convex, Proposition~\ref{prop-SU} applies with $\psi=\psi_{1}$ or $\psi=\psi_{2}$ of Proposition~\ref{prop-expsi} and the MLE on $\overline{\cQ}$ is a $\rho$-estimator. It is obtained by maximizing over the convex set $\sC$ the concave map 
\[
(\alpha_{1},\ldots,\alpha_{N})\mapsto \sum_{i=1}^{n}\log\pa{\sum_{j=1}^{N}\alpha_{j}p_{j}(X_{i})}.
\]

\section*{Acknowledgements}
The authors are grateful to Weijie Su for letting them know about the nice connection between the MLE and $\rho$-estimators in the case of a convex parameter set and for allowing them to include his result (Proposition~\ref{prop-SU}) in this paper.



\pagebreak
\appendix

\section{Proofs of the main results}

\subsection{A general result}\label{P1}
Let $\{\sbQ_{m},\ m\in\cM\}$ be a countable collection of $\rho$-models, $\Delta$ a weight function satisfying~\eref{eq-delta}, $\psi$ a function satisfying Assumption~\ref{H-debase} with $\sbQ=\bigcup_{m\in\cM}\sbQ_{m}$. We fix a representation $\cR=(\gmu,\cbQ)$ of $\sbQ$ leading to a (unique) representation $\cR_{m}$ 
of $\sbQ_{m}\cup\{\overline \gP\}\subset \sbQ$ for each $m\in\cM$ and $\overline \gP\in\sbQ$.
%
\begin{thm}\label{thm-main}
Assume that there exists a real-valued function $G$ on $\sbP\times \sbQ$ and real non-negative numbers $\{D_{n}(m),\; m\in\cM\}$ such that, for all $(\gP,\overline{\gP})\in \sbP\times \sbQ$ and all $m\in\cM$,
\begin{equation}
w(\cR_{m},\sbQ_{m},\gP,\overline \gP,y)\le\frac{a_{1}y^{2}}{8}+8\gh^{2}(\gP,\overline \gP)\quad \mbox{for all}\quad  y>y_{m}
 \label{eq-L1}
\end{equation}
where 
\[
y_{m}=\beta^{-1}\sqrt{{4.7G(\gP,\overline\gP)\over \kappa}+D_{n}(m)}\quad\mbox{for all}\quad m\in\cM
\]
and $\kappa$ is given by \eref{cond-kappa}. Let the penalty function $\gpen$ from $\sbQ$ to $\R_+$ satisfy
\begin{equation}
\gpen(\gQ)\ge\kappa\inf_{\{m\in\cM\,|\,\sbQ_{m}\ni\gQ\}}\left[\frac{D_{n}(m)}{4.7}+\Delta(m)\right]\;\mbox{ for all }\gQ\in\sbQ.
\label{eq-L2}
\end{equation}
Then any $\rho$-estimator $\widehat \gP$ on $\sbQ$ satisfies, for all $\overline{\gP}\in\sbQ$ and $\xi>0$, with probability at least $1-e^{-\xi}$,
\begin{align}
 \gh^{2}(\gP,\widehat \gP) \le &\,\gamma  \gh^{2}(\gP,\overline \gP)-\gh^{2}(\gP,\sbQ)\label{eq-L0} \\
&\,+(4/a_{1})\cro{G(\gP,\overline\gP)+\gpen(\overline \gP)+\kappa(1.49+\xi)},\nonumber
\end{align}
where $\gamma$ given by (\ref{cond-kappa}).
\end{thm}
This theorem is based on an auxiliary result to be proved in Section~\ref{sect-pprop-dev} of the Appendix.
%
\begin{prop}\label{prop-dev}
Let $\co>0$, $K'>0$, $B=1+a_{2}^{2}/(16\co)$ and numbers $\alpha>\delta>1$ and $\vartheta>1$ be such that
\begin{equation}
2\exp[-\vartheta]+\sum_{j\ge1}\exp\left[-\vartheta\delta^{j}\right]\le1.
\label{eq-sum1}
\end{equation}
If, for all $m\in\cM$,
\begin{equation}
w(\cR_{m},\sbQ_{m},\gP,\overline \gP,y)\le \co y^{2}+K'\gh^{2}\left(\gP,\overline{\gP}\right)\;\mbox{ for all }y>\sqrt{\widetilde D_{m}(\gP,\overline \gP)},
\label{eq-dev}%
\end{equation}
then, for any $\xi>0$ and for all $m\in\cM$ simultaneously, with probability at least $1-e^{-\xi}$, 
\begin{align}
&\hspace{-5mm}\sup_{\gQ\in\sbQ_{m}}\cro{\str{3.7}\ab{\st\gZ(\bsX,\overline{\gp},\gq)}- \co\alpha\cro{\gh^{2}(\gP,\overline \gP)+\gh^{2}(\gP,\gQ)}}\label{Eq-fonda}\\
\hspace{10mm}\le&\, K\left[\co
\widetilde D_{m}(\gP,\overline \gP)\bigvee\tau\left[\st\Delta(m)+\vartheta+\xi\right]\right]+K'\left(1+\frac{4\sqrt{2}}{\sqrt{B\tau}}\right)\gh^{2}\!\left(\gP,\overline{\gP}\right)\nonumber,
\end{align}
with
\begin{equation}
\tau\ge\tau_{0}=\frac{1}{2\delta B}\left[\sqrt{1+\frac{\alpha-\delta}{8\delta B}}-1\right]^{-2}\quad\mbox{ and }\quad K=1+8\sqrt{{2B\over\tau}}+\frac{4}{\tau}.
\label{Eq-tau}
\end{equation}
\end{prop}
Let us now turn to the proof of Theorem~\ref{thm-main}. It follows from (\ref{eq-L1})  that \eref{eq-dev} is satisfied with $\widetilde D_{m}(\gP,\overline \gP)=\beta^{-2}\left[4.7\kappa^{-1}G(\gP,\overline\gP)+D_{n}(m)\right]$, $\co={a_{1}/8}$ and $K'=8$. We may therefore apply Proposition~\ref{prop-dev} with $\alpha=4$, $\tau=\tau_{0}$, $\delta=1.175$ and $\vartheta=1.47$, which implies that (\ref{eq-sum1}) is satisfied. Let us then set 
\[
a'=a_{1}/2,\;\; c_{1}=1+8\sqrt{{2B\over\tau_{0}}}+\frac{4}{\tau_{0}},\;\;c_{2}=
8\left(1+\frac{4\sqrt{2}}{\sqrt{B\tau_{0}}}\right)\;\;\mbox{and}\;\;\xi'=\kappa(\xi+\vartheta).
\]
We deduce from (\ref{Eq-fonda}) and the chosen values of the various constants involved that, on a set $\Omega_{\xi}$ the probability of which is at least $1-e^{-\xi}$, for all $\gQ\in\sbQ$ and all $\rho$-models $\sbQ_{m}$ containing $\gQ$,
\begin{align}
\gT(\bsX,\overline \gp,\gq)\le&\; \E\cro{\gT(\bsX,\overline \gp,\gq)}+a'\cro{\gh^{2}(\gP,\overline \gP)+\gh^{2}(\gP,\gQ)}\nonumber\\&\;
+c_{1}\left[\co\widetilde{D}_{m}(\gP,\overline \gP)+\tau_{0}\left(\Delta(m)+\kappa^{-1}\xi'\right)\right]+c_{2}\gh^{2}(\gP,\overline \gP)\nonumber\\
=&\;\E\cro{\gT(\bsX,\overline \gp,\gq)}+a'\cro{\gh^{2}(\gP,\overline \gP)+\gh^{2}(\gP,\gQ)}+c_{2}\gh^{2}(\gP,\overline \gP)\nonumber\\
&\;+\tau_{0}c_{1}\left[\frac{\co}{\tau_{0}\beta^{2}}\left[4.7\kappa^{-1}G(\gP,\overline\gP)+D_{n}(m)\right]+\Delta(m)+\kappa^{-1}\xi'\right].
\label{eq-boundT}
\end{align}
Now observe that $B=1+a_{2}^{2}/(2a_{1})$, hence $B\ge4$ since $a_{2}^{2}\ge 6a_{1}$ and $(\alpha-\delta)/(8\delta B)<0.07514$. Since
\[
0.4909\,x<\sqrt{1+x}-1<0.5\,x\quad\mbox{for } 0<x<0.07514,
\]
it follows that
\[
\frac{0.1475}{B}<\sqrt{1+\frac{\alpha-\delta}{8\delta B}}-1<\frac{\alpha-\delta}{16\delta B}<\frac{0.1503}{B},
\]
hence
\begin{equation}
18.837B<\tau_{0}=\frac{1}{2\delta B}\left[\sqrt{1+\frac{\alpha-\delta}{8\delta B}}-1\right]^{-2}<19.56B
\label{eq-Btau}
\end{equation}
and
\[
\tau_{0}c_{1}=\tau_{0}+8\sqrt{2B\tau_{0}}+4<69.6B+4\le\frac{34.8a_{2}^{2}}{a_{1}}+73.6<\kappa.
\]
Moreover, (\ref{eq-Btau}) and the inequality $B> a_{2}^{2}/(2a_{1})$ imply that
\[
{\co\over \tau_{0}\beta^{2}}=\frac{a_{1}}{8}\frac{16a_{2}^{2}}{a_{1}^{2}\tau_{0}}=\frac{2a_{2}^{2}}{a_{1}\tau_{0}}<\frac{2a_{2}^{2}}{18.837Ba_{1}}<\frac{2a_{2}^{2}}{18.837\left[a_{2}^{2}/(2a_{1})\right]a_{1}}<\frac{1}{4.7}
\]
and
\begin{equation}
\frac{c_{2}}{8}-1<\frac{4\sqrt{2}}{B\sqrt{18.837}}<\frac{8\sqrt{2}a_{1}}{\sqrt{18.837}a_{2}^{2}}<2.6068\frac{a_{1}}{a_{2}^{2}}.
\label{eq-tau0}
\end{equation}
Then (\ref{eq-boundT}) becomes
\begin{align*}
\gT(\bsX,\overline \gp,\gq)\le&\;\E\cro{\gT(\bsX,\overline \gp,\gq)}+a'\cro{\gh^{2}(\gP,\overline \gP)+\gh^{2}(\gP,\gQ)}\\&\;+\kappa\left[\frac{D_{n}(m)}{4.7}+\Delta(m)\right]+G(\gP,\overline\gP)+\xi'+c_{2}\gh^{2}(\gP,\overline \gP).
\end{align*}
Since the last inequality is true for all $\sbQ_{m}$ containing $\gQ$, we derive from (\ref{eq-L2}) that
\begin{align*}
\gT(\bsX,\overline \gp,\gq)\le&\;\E\cro{\gT(\bsX,\overline \gp,\gq)}+a'\cro{\gh^{2}(\gP,\overline \gP)+\gh^{2}(\gP,\gQ)}\\&\;+G(\gP,\overline \gP)+\gpen(\gQ)+\xi'+c_{2}\gh^{2}(\gP,\overline \gP),
\end{align*}
which, together with~\eref{eq-ET}, leads to the following inequality which holds on $\Omega_{\xi}$ for all $\gQ\in \sbQ$:
\[
\gT(\bsX,\overline \gp,\gq)\le A\gh^{2}(\gP,\overline \gP)-a'\gh^{2}(\gP,\gQ)+G(\gP,\overline \gP)+\gpen(\gQ)+\xi'
\]
with $A=a_{0}+a'+c_{2}$. We deduce from this inequality that, on $\Omega_{\xi}$, for any (random) element $\widehat \gP\in\sbQ$,
\begin{equation}
\gT(\bsX,\overline \gp,\widehat \gp)\le A\gh^{2}(\gP,\overline \gP)-a'\gh^{2}(\gP,\widehat \gP)+G(\gP,\overline \gP)+\gpen(\widehat \gP)+\xi'
\label{eq-pp}
\end{equation}
and that
\begin{align}
\gup(\bsX,\overline \gP)&=\sup_{\gQ\in\sbQ}\cro{\gT(\bsX,\overline \gp,\gq)-\gpen(\gQ)}+\gpen(\overline\gP)\nonumber\\&\le A\gh^{2}(\gP,\overline \gP)
-a'\gh^{2}(\gP,\sbQ)+G(\gP,\overline \gP)+\gpen(\overline{\gP})+\xi'.
\label{maj-gup}
\end{align}
Since $\gT(\bsX,\widehat \gp,\overline \gp)=-\gT(\bsX,\overline \gp,\widehat \gp,)$, (\ref{eq-pp}) leads to
\begin{align}
a'\gh^{2}(\gP,\widehat \gP)\le&\; A\gh^{2}(\gP,\overline \gP)-\gT(\bsX,\overline \gp,\widehat \gp)+G(\gP,\overline \gP)+\gpen(\widehat \gP)+\xi'\nonumber\\
=&\; A\gh^{2}(\gP,\overline \gP)+\cro{\gT(\bsX,\widehat \gp,\overline \gp)-\gpen(\overline\gP)}+\gpen(\widehat \gP)\nonumber \\
&\;+G(\gP,\overline \gP)+\gpen(\overline\gP)+\xi'\nonumber\\
\le&\; A\gh^{2}(\gP,\overline \gP)+\gup(\bsX,\widehat \gP)+G(\gP,\overline \gP)+\gpen(\overline\gP)+\xi'. \label{maj-risk0}
\end{align}
If $\widehat \gP$ belongs to the set $\sbE(\psi,\bsX)$, then $\gup(\bsX,\widehat \gP)<\gup(\bsX,\overline \gP)+(\kappa/25)$ and, by~\eref{maj-gup},
\[
\gup(\bsX,\widehat \gP)<A\gh^{2}(\gP,\overline \gP)-a'\gh^{2}(\gP,\sbQ)
+G(\gP,\overline \gP)+\gpen(\overline\gP)+\xi'+(\kappa/25),
\]
which, together with~\eref{maj-risk0}, shows that on the set $\Omega_{\xi}$ and for all $\widehat\gP\in\sbE(\psi,\bsX)$,
\begin{align*}
\lefteqn{a'\gh^{2}(\gP,\widehat \gP)}\hspace{2mm}\\
&\le 2A\gh^{2}(\gP,\overline \gP)-a'\gh^{2}(\gP,\sbQ)
+2\left[G(\gP,\overline \gP)+\gpen(\overline\gP)+\kappa(\xi+\vartheta+0.02)\right].
\end{align*}
This inequality, which extends straightforwardly to any element $\widehat \gP$ belonging to ${\rm Cl}\!\left(\sbE(\psi,\bsX)\right)$, 
writes
\begin{align*}
&\gh^{2}(\gP,\widehat \gP)\\
&\;\;\,\le \frac{4A}{a_{1}}\gh^{2}(\gP,\overline \gP)-\gh^{2}(\gP,\sbQ)
+\frac{4}{a_{1}}\left[G(\gP,\overline \gP)+\gpen(\overline\gP)+\kappa(\xi+\vartheta+0.02)\right]
\end{align*}
and the conclusion follows since, by (\ref{eq-tau0}), $A<a_{0}+(a_{1}/2)+8+21(a_{1}/a_{2}^{2})$.

\subsection{Proof of Theorem~\ref{thm-main1}}\label{sect-main1}
Let us take $\cM=\{0\}$, $\sbQ_{0}=\sbQ$, $D_{n}(0)=\Delta(0)=0$ so that the choice $\gpen(\gQ)=0$ for all $\gQ\in\sbQ$ satisfies \eref{eq-L2}. It follows from Proposition~\ref{prop-gw}, more precisely from \eref{eq-w/D}, that \eref{eq-L1} is satisfied with $G(\gP,\overline\gP)=(\kappa/4.7)D^{\overline \sbQ}(\gP,\overline \gP)$. Consequently, Theorem~\ref{thm-main} applies and  (\ref{eq-L0}) leads to (\ref{eq-risk1}). The lower bound for $\kappa/25$ in (\ref{cond-kappa}) follows from our assumption that $a_{2}^{2}\ge 6a_{1}$.

\subsection{Proof of Theorem~\ref{thm-main4}}\label{sect-main4}
Replacing $\gpen(\gQ)$ by $\gpen(\gQ)+a$ for all $\gQ\in{\sbQ}$ and $a\in\R$ does not change the value of the function $\gup$, hence the set $\sbE(\psi,\bsX)$ and the family of $\rho$-estimators. We may therefore assume that $\kappa_{1}=0$. 

Let $m'\in\cM$ and $\overline \gP\in\sbQ_{m'}$. Within the framework of Theorem~\ref{thm-main}, let us take $G(\gP,\overline \gP)=\kappa K D_{n}(m')/4.7$ for all $(\gP,\overline \gP)\in\sbP\times \sbQ$ so that, under \eref{eq-L4}, \eref{eq-L1} is satisfied by Proposition~\ref{prop-gw}. Moreover (\ref{eq-L3}) with $\kappa_{1}=0$ implies that (\ref{eq-L2}) also holds and Theorem~\ref{thm-main} therefore applies, leading in this case to
\[
\gh^{2}(\gP,\widehat \gP)\le\gamma\gh^{2}(\gP,\overline \gP)+{4\kappa\over a_{1}}\cro{{K+1\over 4.7}D_{n}(m')+\Delta(m')+1.49+\xi}
\]
with probability $1-e^{-\xi}$ whatever $\xi>0$. The conclusion follows since the choices of $m'\in\cM$ and $\overline \gP\in \sbQ_{m'}$ are arbitrary.

\subsection{Proof of Proposition~\ref{prop-dev}}
\label{sect-pprop-dev}

{The proof of Proposition~\ref{prop-dev} relies on the following result --- see Proposition~45 of Baraud {\em et al.}~\citeyearpar{MR3595933} --- which presents an extension of a version of Talagrand's Theorem on the suprema of empirical processes that is proved in Massart~\citeyearpar{MR2319879}. 
\begin{prop}\label{talagrand}
Let $T$ be some finite or countable set, $U_{1},\ldots,U_{n}$ be independent centered random vectors with values in $\R^{T}$ and let $Z=\sup_{t\in T}\ab{\sum_{i=1}^{n}U_{i,t}}$. If for some positive numbers $b$ and $v$, 
\[
\max_{i=1,\ldots,n}\ab{U_{i,t}}\le b\qquad\mbox{and}\qquad
\sum_{i=1}^{n}\E\left[U^2_{i,t}\right]\le v^{2}\ \quad\mbox{for all }t\in T,
\] 
then, for all positive numbers $c$ and $x$,
\begin{equation}
\P\left[Z\le(1+c)\E(Z)+(8b)^{-1}cv^2+2\left(1+8c^{-1}\right)bx\right]\ge1-e^{-x}.
\label{massart3}
\end{equation}
\end{prop}
Let $\xi>0$, $\alpha>\delta>1$, $\overline{\gP}\in\sbQ$ and $m$ in $\cM$ be fixed. For $j\in\N$ we define
\[
x_0(m)=\left(\st\Delta(m)+\vartheta+\xi\right)\bigvee\left(\tau^{-1}\co \widetilde{D}_{m}(\gP,\overline{\gP})\right),
\]
\[
x_j(m)=\delta^jx_0(m),\qquad y_j^2=\co^{-1}\tau x_j(m),
\]
\[
B^{\sbQ_{m}}_{j}(\gP,\overline{\gP})=\ac{\gQ\in\sbQ_{m}\mbox{ such that }
y_{j}^{2}\le\gh^{2}(\gP,\gQ)+\gh^{2}(\gP,\overline{\gP})<y_{j+1}^{2}}
\]
and
\[
Z_{j}^{\sbQ_{m}}(\bsX,\overline{\gp})=\sup_{\gQ\in B^{\sbQ_{m}}_{j}(\gP,\overline{\gP})}
\ab{\gZ(\bsX,\overline{\gp},\gq)}.
\]
Let us drop, for a while, the dependency of the quantities $x_{j}(m)$ with respect to $m$ for $j\ge 0$. Since $B^{\sbQ_{m}}_{j}(\gP,\overline{\gP})\subset\sbB^{\sbQ_{m}}(\gP,\overline{\gP},y_{j+1})$ and $y_{j+1}^{2}>y_{0}^{2}\ge\widetilde{D}_{m}(\gP,\overline{\gP})$, it follows from (\ref{eq-w1}) and (\ref{eq-dev}) that
\begin{equation}
\E\left[Z_{j}^{\sbQ_{m}}(\bsX,\overline{\gp})\right]\le\co y_{j+1}^{2}+K'\gh^{2}\left(\gP,\overline{\gP}\right).
\label{eq-Zj}
\end{equation}
For each $j\ge 0$, we may apply Proposition~\ref{talagrand} to the supremum 
$Z_{j}^{\sbQ_{m}}(\bsX,\overline{\gp})$ by taking  $T=B^{\sbQ_{m}}_{j}(\gP,\overline{\gP})$ (which is 
countable as a subset of $\sbQ_{m}$) and 
\begin{equation}\label{def-U}
U_{i,\gq}=\psi\pa{\sqrt{q_{i}\over \overline p_{i}}(X_{i})}-
\E\cro{\psi\pa{\sqrt{q_{i}\over \overline p_{i}}(X_{i})}}\quad\mbox{for all }i=1,\ldots,n.
\end{equation}
For such a choice, the assumptions of Proposition~\ref{talagrand} are met with $b=2$ (since $\psi$ is bounded by $1$) and $v^{2}=a_{2}^{2}y_{j+1}^{2}$ (by (\ref{eq-var}) and the definition of $B_{j}^{\sbQ_{m}}(\gP,\overline{\gP})$). It therefore follows from \eref{massart3} that, for all $c>0$, with probability at least $1-e^{-x_{j}}$ and for all $\gQ\in B^{\sbQ_{m}}_{j}(\gP,\overline{\gP})$,
\begin{align}
|\gZ(\bsX,\overline{\gp},\gq)|\nonumber
&\le Z_{j}^{\sbQ_{m}}(\bsX,\overline{\gp})\\&\le(1+c)\E\left[Z_{j}^{\sbQ_{m}}(\bsX,\overline{\gp})\right]+(ca_{2}^{2}y_{j+1}^{2}/16)+4\left(1+8c^{-1}\right)x_{j}.\label{Eq-pr1}
\end{align}
Since $y_{j+1}^{2}=\delta y_{j}^{2}$ and $x_j=\co y_{j}^{2}/\tau$, (\ref{Eq-pr1}) becomes, using (\ref{eq-Zj}),
\begin{align*}
\lefteqn{|\gZ(\bsX,\overline{\gp},\gq)|}\hspace{15mm}\\&\le y_{j+1}^{2}
\left[\co(1+c)+\frac{ca_{2}^{2}}{16}\right]+4\left(1+8c^{-1}\right)x_j+K'(1+c)\gh^{2}\left(\gP,\overline{\gP}\right)\\&=\co  y_{j}^{2}\left[\delta cB+\frac{4\left(1+8c^{-1}\right)}{\tau}+\delta\right]+K'(1+c)\gh^{2}\left(\gP,\overline{\gP}\right).
\end{align*}
Minimizing the bracketed term with respect to $c$ leads to $c=4\sqrt{2/(\delta B\tau)}$ 
and
\[
|\gZ(\bsX,\overline{\gp},\gq)|\le\co y_{j}^{2}\left[\frac{4}{\tau}+8\sqrt{\frac{2\delta B}{\tau}}+\delta\right]+K'\left(1+\frac{4\sqrt{2}}{\sqrt{\delta B\tau}}\right)\gh^{2}\left(\gP,\overline{\gP}\right).
\]
Since $\gh^{2}(\gP,\gQ)+\gh^{2}(\gP,\overline{\gP})>y_j^{2}$ on $B^{\sbQ_{m}}_{j}(\gP,\overline{\gP})$, we derive that
\begin{align*}
\lefteqn{|\gZ(\bsX,\overline{\gp},\gq)|-\co\alpha\cro{\gh^{2}(\gP,\gQ)+\gh^{2}(\gP,\overline{\gP})}
-K'\left(1+\frac{4\sqrt{2}}{\sqrt{\delta B\tau}}\right)\gh^{2}\left(\gP,\overline{\gP}\right)}\hspace{70mm}\\
&<\co y_{j}^{2}\left[\frac{4}{\tau}+8\sqrt{\frac{2\delta B}{\tau}}-(\alpha-\delta)\right]
\end{align*}
and the bracketed factor is non-positive provided that
\[
\frac{1}{\tau}\le2\delta B\left[\sqrt{1+\frac{\alpha-\delta}{8\delta B}}-1\right]^2,
\]
which is our condition (\ref{Eq-tau}). Finally, since $\delta>1$, with probability at least $1-e^{-x_{j}}$ and for all 
$\gQ\in B^{\sbQ_{m}}_{j}(\gP,\overline{\gP})$,
\[
|\gZ(\bsX,\overline{\gp},\gq)|-\co\alpha\cro{\gh^{2}(\gP,\gQ)+\gh^{2}(\gP,\overline{\gP})}
-K'\left(1+\frac{4\sqrt{2}}{\sqrt{B\tau}}\right)\gh^{2}\left(\gP,\overline{\gP}\right)<0.
\]

Let us now define
\[
Z^{\sbQ_{m}}(\bsX,\overline\gp)=\sup_{\gQ\in\sbB^{\sbQ_{m}}(\gP,\overline\gP,y_{0})}\ab{\gZ(\bsX,\overline\gp,\gq)}
\]
and apply Proposition~\ref{talagrand} in a similar way to $Z^{\sbQ_{m}}(\bsX,\overline{\gp})$ with $x=x_0$. We then deduce analogously that, for all $c>0$, with probability at least $1-e^{-x_0}$ and for all $\gQ\in \sbB^{\sbQ_{m}}(\gP,\overline{\gP},y_{0})$, 
\begin{align*}
|\gZ(\bsX,\overline{\gp},\gq)|&\le Z^{\sbQ_{m}}(\bsX,\overline{\gp})\\
&\le \co  y_{0}^{2}\left[cB+\frac{4\left(1+8c^{-1}\right)}{\tau}+1\right]+K'(1+c)\gh^{2}\left(\gP,\overline{\gP}\right).
\end{align*}
With $c=4\sqrt{2/(B\tau)}$, we get, for all $\gQ\in\sbB^{\sbQ_{m}}(\gP,\overline\gP,y_{0})$ and with probability at least $1-e^{-x_0}$,
\begin{align*}
|\gZ(\bsX,\overline{\gp},\gq)|&\le\co y_{0}^{2}\left(\frac{4}{\tau}+8\sqrt{\frac{2B}{\tau}}+1\right)
+K'\left(1+\frac{4\sqrt{2}}{\sqrt{B\tau}}\right)\gh^{2}\left(\gP,\overline{\gP}\right)\\&=\left(\frac{4}{\tau}+8\sqrt{\frac{2B}{\tau}}+1\right)\tau x_0+
K'\left(1+\frac{4\sqrt{2}}{\sqrt{B\tau}}\right)\gh^{2}\left(\gP,\overline{\gP}\right).
\end{align*}
Combining all these bounds and reintroducing the dependency of the $x_{j}$ with respect to $m$, we derive that, for all $\gQ\in\sbQ_{m}$ simultaneously,
\begin{align*}
\lefteqn{|\gZ(\bsX,\overline{\gp},\gq)|-\co\alpha\cro{\gh^{2}(\gP,\gQ)+\gh^{2}(\gP,\overline{\gP})}}
\hspace{25mm}\\&\le\left(\frac{4}{\tau}+8\sqrt{\frac{2B}{\tau}}+1\right)\tau x_0(m)
+K'\left(1+\frac{4\sqrt{2}}{\sqrt{B\tau}}\right)\gh^{2}\left(\gP,\overline{\gP}\right),
\end{align*}
with probability at least
\[
1-\eta_{m}\quad\mbox{with}\quad\eta_{m}=2\exp\left[-x_0(m)\right]+\sum_{j\ge1}\exp\left[-x_j(m)\right].
\]
In order to bound $\eta_{m}$, we observe that $x_{j}(m)\ge\Delta(m)+\vartheta\delta^{j}+\xi$ for all $j\in\N$, hence by (\ref{eq-sum1}),
\[
\eta_{m}\le\exp\left[-\xi-\Delta(m)\right]\pa{2\exp[-\vartheta]+\sum_{j\ge1}\exp\left[-\vartheta\delta^{j}\right]}\le\exp\left[-\xi-\Delta(m)\right].
\]
The result finally extends to all $\gQ\in\sbQ$ by summing these bounds over $m\in\cM$ and using~(\ref{eq-delta}). 

\subsection{Proof of Theorem~\ref{thm-Exten}}\label{sect-pthm-Exten}
We shall denote by $\sbQ_{m}$ the $\rho$-models $\{\gq\cdot \gmu,;\ \gq\in\cbQ_{m}\}$ for all $m\in\cM$. Let $\overline \gup(\bsX,\gq)$, $\overline \sbE(\psi,\bsX)$ be the function of $\gq$ and the set defined by \eref{def-gup} and \eref{def-sE} respectively, with $\cbQ$ replaced by $\overline{\cbQ}$ and $\gpen$ by $\overline \gpen$ in \eref{def-gup} and \eref{def-sE}.

Since $\widehat{\gP}$ belongs, by definition, to ${\rm Cl}\!\left(\st\overline \sbE(\psi,\bsX)\right)$, it is enough to show that $\overline \sbE(\psi,\bsX)\subset{\rm Cl}\!\left(\st\sbE(\psi,\bsX)\right)$. The proof will be divided into several steps.

{\bf Step 1.} It follows from Assumption~\ref{Hypo-pen} that for all $\gQ	\in\overline \sbQ$ there exists some $m_{\gQ}\in\cM$ such that $\overline \sbQ_{m_{\gQ}}\ni \gQ$ and $\overline\gpen(\gQ)=p(m_{\gQ})$. Since $\sbQ_{m_{\gQ}}$ is a $\rho$-model with representation $(\gmu,\cbQ_{m_{\gQ}})$ and $\cbQ_{m_{\gQ}}$ is $\sbT$-dense in $\overline{\cbQ}_{m_{\gQ}}$, we can find a sequence $\left(\gq^{(k)}\right)_{k\ge1}$ in $\cbQ_{m_{\gQ}}\subset \overline{\cbQ}_{m_{\gQ}}$ which converges to $\gq$ with respect to the topology $\sbT$ and, by Assumption~\ref{Hypo-pen} again, $\overline \gpen\left(\gQ^{(k)}\right)\le p(m_{\gQ})=\overline\gpen(\gQ)$ for all $k\ge1$.\vspace{2mm}

{\bf Step 2.} Let us now show that $\overline \gup(\bsX,\gq)=\gup(\bsX,\gq)$ for all $\gq\in\cbQ$. Since clearly $\overline \gup(\bsX,\gq)\ge\gup(\bsX,\gq)$, it is enough to show that $\gup(\bsX,\gq)\ge \overline \gup(\bsX,\gq)$. Setting 
\[
\left\{
\begin{array}{lll}  
\Lambda(\bsX,\gq)&=&\sup_{\gr\in\cbQ}\cro{\gT(\bsX,\gq,\gr)-\overline\gpen(\gR)},\\
\overline\Lambda(\bsX,\gq)&=&\sup_{\gr\in\overline{\cbQ}}\cro{\gT(\bsX,\gq,\gr)-\overline\gpen(\gR)},
\end{array}\right.
\]
it suffices to show that $\Lambda(\bsX,\gq)\ge \overline \Lambda(\bsX,\gq)$ for all $\gq\in\cbQ$. Let us fix $\gq\in\cbQ$, $\eps>0$ and some $\gr\in\overline{\cbQ}$ such that $\gT(\bsX,\gq,\gr)-\gpen(\gR)\ge \overline \Lambda(\bsX,\gq)-\varepsilon$. By Step~1, $\gR$ belongs to $\overline{\sbQ}_{m_{\gR}}$ and one can find a sequence $\left(\gr^{(k)}\right)_{k\ge1}$ in $\cbQ_{m_{\gR}}$ which converges to $\gr$ with respect to the topology $\sbT$ so that $r^{(k)}_{i}(X_{i})\dfleche{k\rightarrow+\infty}r_{i}(X_{i})$ for $1\le i\le n$. Moreover, by Step~1 again, $\overline \gpen(\gR^{(k)})\le \overline\gpen(\gR)$.\vspace{2mm}\\
--- If $q_{i}(X_{i})>0$, then, by continuity,
\[
\psi\pa{\sqrt{{r^{(k)}_{i}(X_{i})}\over {q_{i}(X_{i})}}}\dfleche{k\rightarrow+\infty}\psi\pa{\sqrt{{r_{i}(X_{i})}\over {q_{i}(X_{i})}}}.
\]
--- If $q_{i}(X_{i})=0$ and $r_{i}(X_{i})>0$, then, for $k$ large enough, $r^{(k)}_{i}(X_{i})>0$ and
\[
\psi\pa{\sqrt{{r^{(k)}_{i}(X_{i})}\over {q_{i}(X_{i})}}}=\psi(+\infty)=1=\psi\pa{\sqrt{{r_{i}(X_{i})}\over {q_{i}(X_{i})}}}.
\]
--- Finally, if $q_{i}(X_{i})=0=r_{i}(X_{i})$, then 
\[
\psi\pa{\sqrt{{r_{i}(X_{i})}\over {q_{i}(X_{i})}}}=\psi(1)=0
\]
while 
\[
\psi\pa{\sqrt{{r^{(k)}_{i}(X_{i})}\over {q_{i}(X_{i})}}}=\left\{
\begin{array}{ll}  
0&\;\mbox{ if }r^{(k)}_{i}(X_{i})=0,\\1&\;\mbox{ if }r^{(k)}_{i}(X_{i})>0,
\end{array}\right.
\]
so that in both cases it is not smaller than $\psi\pa{\sqrt{r_{i}(X_{i})/q_{i}(X_{i})}}$. It follows that, for $k$ large enough, 
\begin{align*}
\Lambda(\bsX,\gq)&\ge\gT\left(\bsX,\gq,\gr^{(k)}\right)-\overline\gpen\left(\gR^{(k)}\right)\ge\gT(\bsX,\gq,\gr)-\varepsilon-\overline\gpen\left(\gR^{(k)}\right)\\
&\ge\gT(\bsX,\gq,\gr)-\varepsilon-\overline\gpen(\gR)\ge \overline \Lambda(\bsX,\gq)-2\varepsilon,
\end{align*}
hence $\Lambda(\bsX,\gq)\ge \overline \Lambda(\bsX,\gq)$ as expected since $\varepsilon$ is arbitrary.\vspace{2mm}

{\bf Step 3.} Let $\gq$ belong to $\overline{\cbQ}$ and $\left(\gq^{(k)}\right)_{k\ge 1}$ be a sequence in $\cbQ$ which converges to $\gq$ in the topology $\sbT$. We want to show that, given $\varepsilon>0$,
\begin{equation}
\sup_{\gr\in \overline\cbQ}\left[\gT\left(\bsX,\gq^{(k)},\gr\right)-\gT(\bsX,\gq,\gr)\right]\le\eps
\quad\mbox{for $k$ large enough}.
\label{eq-Z1}
\end{equation}
It is actually enough to show that, for all $i\in\{1,\ldots,n\}$ and $k$ large enough,
\begin{equation}
\sup_{\gr\in \overline\cbQ}\left[\psi\pa{\sqrt{r_{i}\over q^{(k)}_{i}}(X_{i})}-\psi\pa{\sqrt{r_{i}\over q_{i}}(X_{i})}\right]\le\frac{\eps}{n}.
\label{eq-Z2}
\end{equation}
Let us fix $i$ and distinguish between two situations.\vspace{2mm}\\
--- {\bf Case 1:} $q_{i}(X_{i})=0$. If $r_{i}(X_{i})>0$, then $\psi\pa{\sqrt{r_{i}/q_{i}}(X_{i})}=1$ so that the result is trivially true since $\psi$ is bounded by one. If $r_{i}(X_{i})=0$ then $\psi\pa{\sqrt{r_{i}/q_{i}}(X_{i})}=0$. If $q^{(k)}_{i}(X_{i})=0$, $\psi\pa{\sqrt{r_{i}/q^{(k)}_{i}}(X_{i})}=0$ and  the inequality is true while if $q^{(k)}_{i}(X_{i})>0$, $\psi\pa{\sqrt{r_{i}/q^{(k)}_{i}}(X_{i})}=\psi(0)=-1$ and the inequality is also true.\vspace{2mm} \\
---  {\bf Case 2:} Let $I\subset\{1,\ldots,n\}$ be the set of indices such $q_{i}(X_{i})>0$. Since
$q^{(k)}_{i}(X_{i})\dfleche{k\rightarrow+\infty}q_{i}(X_{i})$ for all $i\in I$, one can find $a>0$ such that, for $k$ large enough
\[
\inf_{i\in I}\min\left\{q_{i}(X_{i}),q^{(k)}_{i}(X_{i})\right\}\ge a.
\]
We may now apply the following lemma with $\eta=\varepsilon/n$ and
\[
\sup_{i\in I}\left|q_{i}(X_{i})-q^{(k)}_{i}(X_{i})\right|\le\delta,
\] 
which again holds for $k$ large enough, to show that (\ref{eq-Z2}) and therefore (\ref{eq-Z1}) holds.
%
\begin{lem}\label{lem-cont2}
For any $a>0$, $\eta>0$, there exists $\delta=\delta(a,\eta,\psi)>0$, depending only on $a$, $\eta$ and the Lipschitz function $\psi$, such that for all $x, x'\ge a$ and $y\ge0$,
\[
\psi\left(\sqrt{y/x}\right)\le\psi\left(\sqrt{y/x'}\right)+\eta\qquad\mbox{if}\quad|x'-x|\le\delta.
\]
\end{lem}
%
\begin{proof}
There is nothing to prove if $x\ge x'$ by monotonicity of $\psi$. Let us therefore assume that $a\le x<x'$. If $y/x'\ge A>0$, then 
\[
1\ge\psi\left(\sqrt{y/x}\right)\ge\psi\left(\sqrt{y/x'}\right)\ge\psi\left(\sqrt{A}\right)
\]
so that the result holds provided that $\psi\left(\sqrt{A}\right)\ge1-\eta$. Let us fix such a value of $A$ and consider the case of $y/x'<A$. Since $\psi$ is Lipschitz with some Lipschitz constant $L$, then
\begin{align*}
\psi\left(\sqrt{\frac{y}{x}}\right)-\psi\left(\sqrt{\frac{y}{x'}}\right)&\le L\sqrt{y}\left[\frac{1}{\sqrt{x}}-\frac{1}{\sqrt{x'}}\right]=L\sqrt{\frac{y}{x'}}\frac{x'-x}{\sqrt{x}\left(\sqrt{x}+\sqrt{x'}\right)}\\
&< \frac{L\sqrt{A}}{2a}(x'-x)
\end{align*}
and the result holds as soon as $\delta\le2a\eta/\left(L\sqrt{A}\right)$.
\end{proof}

{\bf Step 4}.
Let $\gQ\in \overline\sbE(\psi,\bsX)$ and $\varepsilon$ satisfy
\[
0<\varepsilon<\inf_{\gq'\in\overline{\cbQ}}\overline \gup(\bsX,\gq')+{\kappa\over 25}
-\overline \gup(\bsX,\gq).
\]
Then
\begin{equation}
\overline \gup(\bsX,\gq)=\overline \Lambda(\bsX,\gq)+\overline \gpen(\gQ)<\inf_{\gq'\in\overline{\cbQ}}\overline \gup(\bsX,\gq')+{\kappa\over 25}-\varepsilon.
\label{eq-Z3}
\end{equation}
In such a case $\gQ\in\overline{\sbQ}_{m_{\gQ}}$ and there exists a sequence $\left(\gq^{(k)}\right)_{k\ge 1}$ in $\cbQ_{m_{\gQ}}$ which converges to $\gq$ in the topology $\sbT$ with $\overline \gpen\left(\gQ^{(k)}\right)\le \overline \gpen(\gQ)$ by Step~1. Since 
\[
\overline\Lambda\left(\bsX,\gq^{(k)}\right)-\overline\Lambda(\bsX,\gq)\le \sup_{\gr\in \overline \cbQ}\left[\gT\left(\bsX,\gq^{(k)},\gr\right)-\gT(\bsX,\gq,\gr)\right],
\]
it follows from 
(\ref{eq-Z1}) that, for $k$ large enough, $\overline \Lambda\left(\bsX,\gq^{(k)}\right)-\overline \Lambda(\bsX,\gq)\le \eps$ and, from Step~2, since $\gq^{(k)}\in\cbQ$, that
\begin{align*}
\gup\left(\bsX,\gq^{(k)}\right)=\overline \gup\left(\bsX,\gq^{(k)}\right)&=\overline \Lambda\left(\bsX,\gq^{(k)}\right)+\overline \gpen\left(\gQ^{(k)}\right)\\
&\le \overline\Lambda(\bsX,\gq)+\eps+\overline \gpen(\gQ).
\end{align*}
This, together with (\ref{eq-Z3}) and Step~2 shows that
\[
\gup\left(\bsX,\gq^{(k)}\right)\le \overline \gup(\bsX,\gq)+\eps<\inf_{\gq'\in\cbQ} \overline \gup(\bsX,\gq')+{\kappa\over 25}=\inf_{\gq'\in\cbQ}\gup(\bsX,\gq')+{\kappa\over 25}.
\]
This means that $\gQ^{(k)}\in\sbE(\psi,\bsX)$ for $k$ large enough. Since the sequence 
$\left(\gq^{(k)}\right)_{k\ge 1}$ converges to $\gq$ in the topology $\sbT$ and therefore, as already mentioned in Section~\ref{sect-UnivSep}, in the topology $\sbV$. This shows that $\gQ$ belongs to the closure of $\sbE(\psi,\bsX)$ as required and concludes our proof.
}



%

\section{Universal separability}\label{Sect-UnivSep}
\paragraph{{\bf Histograms}}
$\!\!\!$Let $D$ be a positive integer and $\overline \cH_{D}$ the set of right-continuous histograms on $\R$ with at most $D$ pieces, that is the set of functions $f=\sum_{j=1}^{D}\alpha_{j}\1_{[a_{j},a_{j+1})}$ with
\[
\alpha_{j}\ge 0\;\mbox{ for }1\le j\le D,\quad a_{1}<a_{2}\ldots<a_{D+1}\quad\mbox{and}\quad\sum_{j=1}^{D}\alpha_{j}(a_{j+1}-a_{j})=1.
\]
Let $\cH_{D}$ be the subset of $\overline \cH_{D}$ consisting of those elements $f$ for which the $\alpha_{j}$ and the $a_{j}$ are rational numbers and $\alpha_{j}>0$ for all $j$. Clearly, $\cH_{D}$ is countable. Let us show that $\cH_{D}$ is $\sbT$-dense in $\overline \cH_{D}$. Given $f\in\overline \cH_{D}$, let us consider sequences of rational numbers $(\alpha_{j,n})_{n\ge 1}$, $(a_{j,n})_{n\ge 1}$ for $1\le j\le D$ and $(a_{D+1,n})_{n\ge 1}$ with the following properties: $(\alpha_{j,n})_{n\ge 1}$ is non-increasing and $(a_{j,n})_{n\ge 1}$ is non-decreasing, $a_{D+1,n}\dfleche{n\to+\infty}a_{D+1}$ and, for all $j\in\{1,\ldots,D\}$, $\alpha_{j,n}\dfleche{n\to+\infty}\alpha_{j}$ and $a_{j,n}\dfleche{n\to+\infty}a_{j}$. Then the sequence $(f_{n})_{n\ge 1}$ of histograms $f_{n}$ given by
\[
f_{n}=\sum_{j=1}^{D}{\alpha_{j,n}\over \sum_{j=1}^{D}\alpha_{j,n}(a_{j+1,n}-a_{j,n})}\,\1_{[a_{j,n},a_{j+1,n})}\in\cH_{D}
\]
converges pointwise toward $f$. The set $\overline \cH_{D}$ is therefore universally separable.
%
\paragraph{{\bf Non-decreasing densities}}
Let $\overline\cH_{\downarrow}$ be the set of non-decreasing and right-continuous densities on $I=(0,+\infty)$ and $(A_{n})_{n\ge 1}$ be an increasing (for the inclusion) sequence of finite 
subsets of $(I\cap \Q)^{2}$ such that $\cup_{n\ge 1}A_{n}=(I\cap \Q)^{2}$. We define, for $C$ a finite non empty subset 
\pagebreak
of $(I\cap \Q)^{2}$, $g_{C}=\sup_{(q,r)\in C}r\1_{(0,q)}$ and set, for $n\ge1$,
\[
\cG_{n}=\ac{g_{C}\;\mbox{ for }C\subset A_{n},\ C\ne\varnothing}\qquad \mbox{and}\qquad \cG=\cup_{n\ge 1}\cG_{n}.
\]
Any element $g_{C}$ of $\cG_{n}$ is not identically equal to 0 since $qr>0$; it is also non-increasing, right-continuous and integrable on $I$ (with respect to the Lebesgue measure) as the supremum of a finite number of non-increasing, right-continuous and integrable functions on $I$. This implies that $\int_{I}g_{C}(x)dx$ belongs to $(0,+\infty)$. Moreover $\cG$ is countable since each $\cG_{n}$ is finite. We may therefore define 
\[
\cH_{\downarrow}=\ac{f={g\over \int_{I}g(x)dx },\; g\in \cG}
\]
and, for $f\in\overline\cH_{\downarrow}$ and $n\ge 1$,
\[
C_{n}^{f}=\{(q,r)\in A_{n},\; r\le f(q)\}.
\]

Then $\cH_{\downarrow}$ is a countable subset of $\overline\cH_{\downarrow}$. It follows from the properties of the sets $A_{n}$ that the sequence $\left(C_{n}^{f}\right)_{n\ge 1}$ is non-decreasing for the inclusion, that $C_{n}^{f}\ne\varnothing$ for $n\ge N$ if $N$ is large enough and that
\[
\bigcup_{n\ge N}C_{n}^{f}=C^{f}=\{(q,r)\in (I\cap \Q)^{2},\; r\le f(q)\}.
\]
Let us consider the sequence $(g_{n})_{n\ge N}$ with $g_{n}=g_{C_{n}^{f}}\in\cG_{n}$. Since the function $f$ is non-increasing and right-continuous, for all $x\in I$, 
\[
f(x)=\sup_{q\in I\cap \Q}f(q)\1_{(0,q)}(x)=\sup_{(q,r)\in C^{f}}r\1_{(0,q)}(x)=\lim_{n\to +\infty}g_{n}(x).
\]
Furthermore, the sequence $(g_{n})_{n\ge N}$ is non-negative and non-decreasing so that by monotone convergence $\int_{I}g_{n}(x)dx\to \int_{I}f(x)dx=1$. Therefore the sequence $(f_{n})_{n\ge N}$, with $f_{n}=g_{n}/[\int_{I}g_{n}(x)dx]$, belongs to $\cH_{\downarrow}$ for all $n\ge N$ and
\[
f(x)=\lim_{n\to +\infty}f_{n}(x)\quad \mbox{for all }x\in I.
\]
This implies that $\cH_{\downarrow}$ is $\sbT$-dense in $\overline\cH_{\downarrow}$ which is therefore universally separable. 

\section{VC-Dimensions}\label{sect-ppp-dimexp}

\subsection{Proof of Proposition~\ref{prop-CPM}}\label{pr9}
Let $(x_{1},u_{1}),\ldots,(x_{n},u_{n})$ be $n=2k+1$ points of $\sX\times\R$ such that 
$x_{1}\le x_{2}\le \ldots\le x_{n}$. For all $j\in\{1,\ldots,k\}$, let $i_{j}$ be an index in $\{2j-1,2j\}$ such that $u_{i_{j}}=\max\{u_{2j-1},u_{2j}\}$ and $\cK=\{i_{j}, j\in\{1,\ldots,k\}\}\cup\{2k+1\}$. Let us prove that the subset $\{(x_{i},u_{i}),\ i\in  \cK\}$ cannot be picked up by the subgraphs of the functions $f\in\sF_{k}$. For any $f\in\sF_{k}$, there exists a partition $\sI=\sI(f)$ of $\R$ into at most $k$ intervals on which $f$ is based. Since $n>2k$, there exists at least one interval $I\in \sI$ such that $I$ contains three consecutive points $x_{i}$ and among these three points, there exist two points $x_{i},x_{i'}$ with indices $i\in \cK$ and $i'\not\in \cK$ such that either $(i,i')$ or $(i',i)$ is of the form $(2j-1,2j)$ for some $j\in\{1,\ldots,k\}$. Since $f$ is piecewise constant on the elements on $\sI$ and $u_{i'}\le u_{i}$ whenever the subgraph of $f$ picks up $(x_{i},u_{i})$ then it also picks the point $(x_{i'},u_{i'})$. Hence, no subgraph of $f\in\sF_{k}$ picks up the subset $\{(x_{i},u_{i}),\ i\in  \cK\}$.

\subsection{Proof of Proposition~\ref{pp-dimexp}}\label{Appendix-pp-dimexp}
Let us first prove~\ref{pp-eq1}. The linear span $V$ of $\{g_{1},\ldots,g_{J}\}$ is VC-subgraph with VC-index not larger than $J+2$ by Lemma~2.6.15 of van der Vaart and Wellner~\citeyearpar{MR1385671}. The function $u\mapsto e^{u}$ being increasing, it follows from Proposition~42, $ii)$ of Baraud {\em et al.}~\citeyearpar{MR3595933} that the class $\cF=\{e^{v}, v\in V\}$ is also VC-subgraph with index not larger than $J+2$ and $\cQ$ as well since it is a subset of $\cF$, which concludes the proof of~\ref{pp-eq1}. 
 
By~\ref{pp-eq1}, the families $\cQ_{I}$ with $I\in\sI$ are VC-subgraph on $I$ with indices not larger than $J+2$ and since $\sI$ is a partition of $\sX$ with cardinality not larger than $k$, we deduce from  Baraud and Birg\'e~\citeyearpar{MR3565484}[Lemma~5.3)] that $\cQ$ is VC-subgraph with index not larger than $k(J+2)$ which proves~\ref{pp-eq1b}. 

To prove~\ref{pp-eq2} we fix some $\overline p\in \cQ$. By assumption, an element $q\in\cQ$ consists of functions which are of the form~\eref{formexp} on a partition $\sI(q)$ of $\sX$ into at most $k$ intervals. Since 
\[
\sI(q)\vee \sI(\overline p)=\left\{I\cap I',\ I\in\sI(q),\ I'\in \sI'(\overline p)\right\}
\]  
is a partition of $\sX$ into at most $2k$ intervals and, since on each element of such partition $q/\overline p$ is of the form~\eref{formexp}, the class $(\cQ/\overline p)$ is a $(2k)$-piecewise exponential family based on $J$ functions. Consequently, to prove~\ref{pp-eq2} it suffices to show that a $K$-piecewise exponential family based on $J$ functions is weak-VC major with dimension not larger than $\lceil4.7K(J+2)\rceil$ and to apply the result with $K=2k$. This is precisely the aim of the following proposition.
%
\begin{prop}\label{prop-VCexp}
If $\sF$ is a $K$-piecewise exponential family based on $J$ functions on a non-trivial interval $\sX\subset \R$, it is weak VC-major with dimension not larger than $d=\lceil4.7K(J+2)\rceil$.
\end{prop}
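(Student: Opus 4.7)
My plan is to establish the weak VC-major property by bounding, for each fixed $u\in\R$, the shatter function of $\sC_u(\sF)$ and then forbidding the shattering of more than $d$ points. Since each $f\in\sF$ is strictly positive on $\sX$, $\{f>u\}=\sX$ whenever $u\le 0$ and $\sC_u(\sF)$ is trivial; I therefore restrict attention to $u>0$. Setting $c=\log u$, each $f\in\sF$ admits a partition $\sI(f)$ of $\sX$ into at most $K$ intervals such that $f|_I=\exp(v_I)$ for some $v_I\in V:=\mathrm{span}(g_1,\ldots,g_J)$ on each $I\in\sI(f)$, whence
\[
\{f>u\}=\bigcup_{I\in\sI(f)}\bigl(I\cap\{v_I>c\}\bigr).
\]

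I then fix an arbitrary finite set $\{x_1<\cdots<x_n\}\subset\sX$ and count the traces $\{i:f(x_i)>u\}$ that arise as $f$ varies in $\sF$. The count factors through two independent choices: (a) the partition of $\{1,\ldots,n\}$ into at most $K$ consecutive blocks induced by $\sI(f)$, of which there are at most $\sum_{k=0}^{K-1}\binom{n-1}{k}$ (obtained by choosing at most $K-1$ cuts among the $n-1$ gaps between consecutive points); (b) for each block $B_j$ of size $m_j$, a sign pattern of $\{v>c\}$ on $B_j$ for some $v\in V$. Since $\{\{v>c\}:v\in V\}$ is contained in the class of positivity sets of the $(J+1)$-dimensional space $V\oplus\R\cdot 1$, it is a VC-class with dimension at most $J+1$ by Lemma~2.6.15 of van der Vaart and Wellner~\citeyearpar{MR1385671}; Sauer's lemma then gives at most $S(m_j):=\sum_{i=0}^{J+1}\binom{m_j}{i}$ such patterns on $B_j$. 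Log-concavity of $m\mapsto S(m)$ implies $\prod_j S(m_j)\le S(n/K)^K$ subject to $\sum_j m_j=n$, and combining with the envelopes $\sum_{i=0}^r\binom{m}{i}\le (em/r)^r$ valid for $m\ge r$ yields
\[
m^{\sC_u(\sF)}(n)\le\left(\frac{e(n-1)}{K-1}\right)^{\!K-1}\!\left(\frac{en}{K(J+1)}\right)^{\!K(J+1)}.
\]

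The final step is to verify that this upper bound is strictly less than $2^n$ whenever $n>d=\lceil 4.7K(J+2)\rceil$, which precludes shattering and completes the proof. Writing $\lambda=n/(K(J+2))$ and dividing through by $K(J+2)$, this reduces to a single-variable inequality of the form $\lambda\log 2>1+\log\lambda+\varepsilon_J$, where $\varepsilon_J$ is a bounded correction term of order $[1+\log(J+2)]/(J+2)$ that vanishes as $J\to\infty$; the worst case therefore occurs at small $J$. The main obstacle is carrying out this optimisation tightly enough to recover precisely the constant $4.7$: the polynomial envelopes above yield a constant in the neighbourhood of $5$, so a small refinement---using the exact Sauer counts rather than their envelopes, or a sharper enumeration of distinct induced partitions---is required to bring the constant down to the stated value.
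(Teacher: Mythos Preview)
Your overall strategy---reduce to $u>0$, enumerate the induced interval-partitions of the $n$ ordered points, and apply Sauer's lemma blockwise to the class $\{\{v>c\}:v\in V\}$ with VC-dimension at most $J+1$---is exactly the paper's approach. The structure is correct; what is missing is precisely the sharpening you flag at the end, and the paper shows how to carry it out without any ``exact Sauer counts'' or finer enumeration.

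The gap is the way you keep the partition-count factor and the pattern-count factor separate, ending with two polynomial envelopes $\bigl(e(n-1)/(K-1)\bigr)^{K-1}$ and $\bigl(en/(K(J+1))\bigr)^{K(J+1)}$. The paper instead merges them into a single factor with exponent $K(J+2)$. Concretely: first bound $\prod_\ell |\cI_\ell|^{J+1}\le (n/L)^{L(J+1)}$ by concavity of $\log$ (this replaces your log-concavity claim on $S(m)$, which is an unnecessary detour); then use that $x\mapsto (en/x)^x$ is increasing on $(0,n]$ to replace $L(J+1)$ by $K(J+1)$ uniformly in $L$; then sum $\sum_{L=1}^{K}\binom{n-1}{L-1}<(en/K)^{K}$. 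The crucial final step is one more application of the concavity of $\log$:
\[
\frac{1}{J+2}\log\!\frac{en}{K}+\frac{J+1}{J+2}\log\!\frac{en}{K(J+1)}\;\le\;\log\!\frac{2en}{K(J+2)},
\]
which collapses the product $(en/K)^{K}\cdot\bigl(en/(K(J+1))\bigr)^{K(J+1)}$ into $\bigl(2en/(K(J+2))\bigr)^{K(J+2)}$. With this single-factor bound the problem reduces to the one-variable inequality $x^{-1}\log(2ex)<\log 2$, which holds for all $x>4.7$ (the left-hand side is decreasing for $x>1/2$), and this is exactly where the constant $4.7$ comes from. No refinement of Sauer or of the partition count is needed.
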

\begin{proof}
Let $u\in\R$. If $u\le 0$, $\sC_{u}(\sF)$ is reduced to the singleton $\{\sX\}$ so that it is VC on $\sX$ with dimension $0<d$. We may therefore assume from now on that $u>0$. Let  $x_{1},\ldots,x_{k}$ be $k>d$ arbitrary points in $\sX$. With no loss of generality we may assume that $x_{1}<\ldots<x_{k}$. We need to prove that $\sC_{u}(\sF)$ cannot shatter $\{x_{1},\ldots,x_{k}\}$. To do so, it suffices to prove that 
\[
\ab{\ac{\{i\in\cI, f(x_{i})>u\},\ f\in\sF}}<2^{k}\quad\mbox{with}\quad\cI=\{1,\ldots,k\}.
\]
Each $f\in\sF$ is associated to a partition $\sI(f)$ of $\sX$ with cardinality not larger than $K\ge 1$ which provides a partition of $\{x_{1},\ldots,x_{k}\}$ into $L\le K$ non-void subsets and this partition induces, via the correspondance $x_{i}\mapsto i$ a partition $\{\cI_{\ell},\ \ell=1,\ldots,L\}$ of $\cI$, each $\cI_{\ell}$ consisting of consecutive integers. Such a partition is therefore determined by a sequence $i_{1}=1<i_{2}<\ldots<i_{L}\le k$ where $i_{\ell}$ denotes the first element of $\cI_{\ell}$. For a given $L\in\{1,\ldots,K\}$, the number $N_{L}$ of possible partitions of $\cI$ into $L$ such subsets is therefore the number $N_{L}$ of choices for $\{i_{2},\ldots,i_{L}\}$: 
\begin{equation}\label{eq-bNL}
N_{L}=\binom{k-1}{L-1}.
\end{equation}
We have seen in~\ref{pp-eq1}) that the class 
\[
\sG=\ac{\exp\pa{\sum_{j=1}^{J}\beta_{j}g_{j}},\ \beta_{1},\ldots,\beta_{J}\in\R}
\] 
is VC-subgraph with dimension (VC-dimension = VC-index$-1$) not larger than $J+1$, it is therefore weak VC-major with dimension not larger than $J+1$ by Proposition~1 of Baraud~\citeyearpar{Bar2016}. Hence the class of subsets $\sC_{u}(\sG)$ is VC with dimension not larger than $J+1$. Given a partition $\{\cI_{\ell},\ \ell=1,\ldots,L\}$ of $\{1,\ldots,k\}$, it follows from Sauer's Lemma (see Sauer~\citeyearpar{MR0307902}) that 
\[
\ab{\ac{\{i\in \cI_{\ell},\ g(x_{i})>u\},\ g\in\sG}}\le \pa{e|\cI_{\ell}|\over J+1}^{J+1}\ \ \mbox{for all}\ \ \ell=1,\ldots,L.
\]
Hence, 
\begin{equation}\label{eq-MF}
\ab{\ac{\{i\in\cI,\ f(x_{i})>u\},\ f\in\sF}}\le \sum_{L=1}^{K}\sum_{\cI_{1},\ldots,\cI_{L}}\prod_{\ell=1}^{L}\pa{e|\cI_{\ell}|\over J+1}^{J+1}
\end{equation}
where the second sum varies among all possible partitions $\{\cI_{1},\ldots,\cI_{L}\}$ of size $L$ of $\cI$ into consecutive integers. Using the concavity of the logarithm, the fact that $\sum_{\ell=1}^{L}|\cI_{\ell}|=k$ and~\eref{eq-bNL} we get, 
\pagebreak
\begin{align*}
\lefteqn{\sum_{L=1}^{K}\,\sum_{\cI_{1},\ldots,\cI_{L}}\,\prod_{\ell=1}^{L}\pa{e|\cI_{\ell}|\over J+1}^{J+1}}\hspace{15mm}\\
&=\sum_{L=1}^{K}\,\sum_{\cI_{1},\ldots,\cI_{L}}\pa{e\over J+1}^{L(J+1)}\exp\cro{(J+1)L\times {1\over L}\sum_{\ell=1}^{L}\log |\cI_{\ell}|}\\
&\le\sum_{L=1}^{K}\,\sum_{\cI_{1},\ldots,\cI_{L}}\pa{e\over J+1}^{L(J+1)}\exp\cro{(J+1)L\log\left(\frac{k}{L}\right)}\\&=\sum_{L=1}^{K}N_{L}\pa{ek\over L(J+1)}^{L(J+1)}.
\end{align*}
%
Since the function $x\mapsto(ek/x)^{x}$ is increasing on the interval $(0,k]$ and $k>d>K(J+1)$, then
\[
\pa{ek\over L(J+1)}^{L(J+1)}\le \pa{ek\over K(J+1)}^{K(J+1)}\quad\mbox{for all }L\in\{1,\ldots,K\},
\]
so that
\begin{equation}
\sum_{L=1}^{K}\,\sum_{\cI_{1},\ldots,\cI_{L}}\,\prod_{\ell=1}^{L}\pa{e|\cI_{\ell}|\over J+1}^{J+1}\le\left(\sum_{L=1}^{K}N_{L}\right)\pa{ek\over K(J+1)}^{K(J+1)}.
\label{eq-conv0}
\end{equation}
Since $\sum_{j=0}^{m}\binom{p}{j}\le (ep/m)^{m}$ for $0\le m\le p$, it follows from \eref{eq-bNL} that
\[
\sum_{L=1}^{K}N_{L}=\sum_{L=1}^{K}\binom{k-1}{L-1}<\sum_{j=0}^{K}\binom{k}{j}\le \pa{ek\over K}^{K},\]
and (\ref{eq-conv0}) becomes, using the concavity of the logarithm again,
\begin{align*}
\lefteqn{\frac{1}{K(J+2)}\log\left(\sum_{L=1}^{K}\,\sum_{\cI_{1},\ldots,\cI_{L}}\,\prod_{\ell=1}^{L}\cro{e|\cI_{\ell}|\over J+1}^{J+1}\right)}\hspace{10mm}\\
&\le\frac{1}{(J+2)}\log\pa{ek\over K}+\frac{J+1}{J+2}\log\pa{ek\over K(J+1)}\le\log\left(\frac{2ek}{K(J+2)}\right).
\end{align*}
Finally (\ref{eq-MF}) leads to
\[
\frac{1}{k}\log\left(\str{4}\ab{\ac{\{i\in\cI, f(x_{i})>u\},\ f\in\sF}}\right)\le\frac{K(J+2)}{k}\log\pa{{2ek\over K(J+2)}}.
\]
One can easily check that the function $x\mapsto x^{-1}\log(2ex)$ is decreasing for $x>1/2$ and smaller than $\log 2$ for $x\ge4.7$ which implies that
\[
\str{4}\ab{\ac{\{i\in\cI, f(x_{i})>u\},\ f\in\sF}}<2^{k}\quad\mbox{for }k\ge4.7K(J+2).
\]
The conclusion follows.
\end{proof}

\section{Other proofs}

\subsection{Proof of Proposition \ref{CEX-MLE}}\label{proof-EXgaussien}
Let us denote by $X_{(1)},\ldots,X_{(n)}$ the order statistics, by $\overline{X}_{n}$ the mean of the observations  and let us work on the set $\Omega_{n}\subset\Omega$ on which the following properties are satisfied:
\begin{equation}
\overline{X}_{n}\not\in\left\{X_{(1)},\ldots,X_{(n)}\right\}\qquad\mbox{and}\qquad X_{(n)}\ge\sqrt{\log(4n)}>\left|\overline{X}_{n}\right|.
\label{eq-O_n}
\end{equation}
Since $\P(\Omega_{n})\dfleche{n\rightarrow+\infty}1$ whatever $\theta\in\R$, it is enough to show that the MLE is attained at $X_{(n)}$ when the event $\Omega_{n}$ holds which we now assume. It implies in particular that 
\begin{equation}
n^{-1}\exp\cro{X_{(n)}^{2}}-1\ge3.
\label{eq-omega-n}
\end{equation}
For all $\theta\in\R$, the log-likelihood writes $nL_{n}(\theta)$ with
\[
L_{n}(\theta)=\theta\overline X_{n}-{\theta^{2}\over 2}+{\theta^{2}\over 2n}\left(\sum_{k=1}^{n}\exp\cro{X_{(k)}^{2}}\1_{X_{(k)}}(\theta)\right)\1_{(0,+\infty)}(\theta).
\]
If either $\theta<0$ or $\theta>X_{(n)}$, by (\ref{eq-O_n}) and (\ref{eq-omega-n}),
\begin{align*}
L_{n}(\theta)&=\theta\overline X_{n}-\left(\theta^{2}/2\right)\le\overline X_{n}^{2}/2\\&<
X_{(n)}\overline X_{n}+{X_{(n)}^{2}\over 2}\left(n^{-1}\exp\cro{X_{(n)}^{2}}-1\right)=L_{n}\!\left(X_{(n)}\right),
\end{align*}
so that the MLE necessarily belongs to the interval $\left[0,X_{(n)}\right]$. Moreover,
\[
L_{n}(\theta)\le\overline L_{n}(\theta)=\theta 
\overline X_{n}+\left(\theta^{2}/2\right)\left(n^{-1}\exp\cro{X_{(n)}^{2}}-1\right)\quad\mbox{for all }\theta
\]
and, by (\ref{eq-omega-n}), $\overline L_{n}$ is strictly convex. Since by (\ref{eq-O_n}) and (\ref{eq-omega-n}),
\[
\overline L_{n}(0)=0<X_{(n)}\overline X_{n}+X_{(n)}^{2}\le\overline L_{n}\!\left(X_{(n)}\right),
\]
by convexity the unique maximum of the function $\overline L_{n}$ on the interval $[0,X_{(n)}]$ is reached at the point $X_{(n)}$. The conclusion follows since $L_{n}\!\left(X_{(n)}\right)=\overline L_{n}\!\left(X_{(n)}\right)$ and $L_{n}(\theta)\le\overline{L}_{n}(\theta)$ for all $\theta$.

\subsection{Some comments on Assumptions~\ref{ass-psi} and \ref{H-debase}}\label{Hpsi1}

In view of~\eref{eq-esp} and~\eref{eq-var}, the conditions $a_{0}\ge 1$, $a_{1}\le 1$ and  $a_{2}^{2}\ge 1\vee(6a_{1})$ can always be satisfied by enlarging $a_0$ and $a_2$ and diminishing $a_1$ if necessary. The conditions $a_{0}\ge 1\ge a_{1}$ and  $a_{2}\ge 1$ turn out to be necessary when $\psi(+\infty)=1$ and there exist two probabilities $\gQ,\gQ'\in\sbQ$ such that $h(Q_{i},Q_{i}')=1$ for some $i\in\{1,\ldots,n\}$. In this case, for $R_{i}=Q_{i}'$ and any reference measure $\mu_{i}$, $\psi\pa{\sqrt{(q'_{i}/q_{i})(x)}}=\psi(+\infty)=1$ for $R_{i}$-almost all $x\in\sX_{i}$ so that the left-hand side of~\eref{eq-esp} equals 1 while the right-hand side equals $a_{0}h^{2}(R_{i},Q_{i})=a_{0}$ leading to the inequality $a_{0}\ge 1$. The same argument applies to~\eref{eq-var} and leads to the inequality $a_{2}\ge 1$. Taking now $R_{i}=Q_{i}$,  $\psi\pa{\sqrt{q'_{i}/q_{i}}(x)}=\psi(0)=-1$ for $R_{i}$-almost all $x\in\sX_{i}$ so that the left-hand side of~\eref{eq-esp} equals $-1$ while the right hand-side equals $-a_{1}h^{2}(R_{i},Q_{i}')=-a_{1}$ which leads to the inequality $a_{1}\le 1$. 
	
Inequality~\eref{eq-var} actually holds if $\psi$ is Lipschitz on $[0,+\infty)$, which is required by Assumption~\ref{ass-psi}. It indeed follows from Lemma~\ref{Cond-Lips} below that \eref{eq-var} holds under the weaker assumption that $\psi$ satisfies the following inequality for some $L>0$:
\begin{equation}
\ab{\psi(x)}=\ab{\psi(x)-\psi(1)}\le L\ab{x-1}\ \ \mbox{for all}\ \ x\in\R_{+}.
\label{eq-01}
\end{equation}
%
\begin{lem}\label{Cond-Lips} 
If $\psi$ satisfies Assumption~\ref{ass-psi} without the Lipschitz condition together with \eref{eq-01}, inequality~\eref{eq-var} is satisfied with $a_{2}=2L+1$.
\end{lem}
\begin{proof}
Let $(\gmu,\cbQ)$ be some representation of $\sbQ$, $\gq,\gq'$ two densities belonging to $\cbQ$, $\gR$ an arbitrary element of $\sbP$ and $i$ some index in $\{1,\ldots,n\}$. Decompose $R_{i}$ in the form $r_{i}\cdot\mu_{i}+R''_{i}$ with $R''_{i}$ orthogonal to $\mu_{i}$. For simplicity we drop the index $i$ and write $\sX,\mu,R,R'',Q,Q',r,q,q'$ for $\sX_{i}$, $\mu_{i}$, $R_{i}$, $R_{i}''$, $Q_{i}$, $Q_{i}'$, $r_{i}$, $q_{i}$, $q_{i}'$. Using the inequality, valid for all $\alpha>0$, 
\[
r=\pa{\sqrt{r}-\sqrt{q}+\sqrt{q}}^{2}\le(1+\alpha)\pa{\sqrt{r}-\sqrt{q}}^{2}+\left(1+\alpha^{-1}\right)q,
\]
the fact that $\psi^2$ is bounded by 1 and~\eref{eq-01}, we obtain that
\begin{align*}
\int_{\sX}\psi^{2}\pa{\sqrt{q'/ q}}r\,d\mu\le&\;(1+\alpha)\int_{\sX}\psi^{2}\pa{\sqrt{q'/ q}}\pa{\sqrt{r}-\sqrt{q}}^{2}d\mu\\
&\;+\left(1+\alpha^{-1}\right)\int_{\sX}\psi^{2}\pa{\sqrt{q'/q}}q\,d\mu\\
=&\;(1+\alpha)\int_{\sX}\psi^{2}\pa{\sqrt{q'/q}}\pa{\sqrt{r}-\sqrt{q}}^{2}d\mu\\
&\;+\left(1+\alpha^{-1}\right)
\int_{\sX\cap\{q>0\}}\psi^{2}\pa{\sqrt{q'/q}}q\,d\mu\\\le&\;
(1+\alpha)\int_{\sX}\pa{\sqrt{r}-\sqrt{q}}^{2}d\mu\\
&\;+\left(1+\alpha^{-1}\right)L^{2}\int_{\sX\cap\{q>0\}}\pa{\sqrt{q'/q}-1}^{2}q\,d\mu,
\end{align*}
hence
\[
\int_{\sX}\psi^{2}\pa{\sqrt{q'/ q}}r\,d\mu\le(1+\alpha)\!\int_{\sX}\!\pa{\sqrt{r}-\sqrt{q}}^{2}d\mu+2\left(1+\alpha^{-1}\right)L^{2}h^{2}(Q,Q').
\]
Since
\[
2h^{2}(R,Q)=\int_{\sX}\pa{\sqrt{r}-\sqrt{q}}^{2}d\mu+\int_{\sX}dR'',
\]
we finally get
\begin{align*}
\int_{\sX}\psi^{2}\pa{\sqrt{q'/ q}}dR&\le\int_{\sX}\psi^{2}\pa{\sqrt{q'/ q}}r\,d\mu+\int_{\sX}dR''\\&\le2(1+\alpha)h^{2}(R,Q)+2\left(1+\alpha^{-1}\right)L^{2}h^{2}(Q,Q').
\end{align*}
A similar bound holds for $\int_{\sX}\psi^{2}\pa{\sqrt{q/ q'}}dR$. Using (\ref{H-phi}) and averaging the two bounds, then using $h^{2}(Q,Q')\le 2\pa{h^{2}(R,Q)+h^{2}(R,Q')}$, we get
\begin{align*}
\int_{\sX}\psi^{2}\pa{\sqrt{q'/ q}}dR=&\;\int_{\sX}\psi^{2}\pa{\sqrt{q/ q'}}dR\\ \le&\;
(1+\alpha)\cro{h^{2}(R,Q)+h^{2}(R,Q')}\\
&\;+2\left(1+\alpha^{-1}\right)L^{2}h^{2}(Q,Q')\\\le&\;
\cro{h^{2}(R,Q)+h^{2}(R,Q')}\left[1+\alpha+4L^{2}(1+\alpha^{-1})\right].
\end{align*}
The conclusion follows by choosing $\alpha=2L$.
\end{proof}

\subsection{Proof of Proposition~\ref{prop-universel}}\label{sect-pprop-universel}
We start with the following result.
\begin{lem}\label{lem-ext}
If \eref{eq-esp} and \eref{eq-var} hold with $a_{0}\ge 2(1+a_{1})$ for a representation $(\gmu,\cbQ)$ and all $\gR\ll\gmu$, they hold for this representation and all $\gR\in\sbP$.
\end{lem}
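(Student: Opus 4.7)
The plan is to handle an arbitrary $\gR=\bigotimes_{i=1}^{n}R_{i}\in\sbP$ by decomposing each marginal $R_{i}$ according to its Lebesgue decomposition with respect to $\mu_{i}$. Writing $R_{i}=\alpha_{i}A_{i}+(1-\alpha_{i})S_{i}$ with $\alpha_{i}\in[0,1]$, $A_{i}\ll\mu_{i}$ (when $\alpha_{i}>0$) and $S_{i}\perp\mu_{i}$ (when $\alpha_{i}<1$), the integrals in~\eref{eq-esp} and~\eref{eq-var} split as $\int f\,dR_{i}=\alpha_{i}\int f\,dA_{i}+(1-\alpha_{i})\int f\,dS_{i}$ for $f\in\ac{\psi(\sqrt{q'_{i}/q_{i}}),\psi^{2}(\sqrt{q'_{i}/q_{i}})}$. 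The hypothesis applies to $A_{i}$ by choosing any product probability with $i$-th marginal $A_{i}$ and the other marginals absolutely continuous with respect to $\mu_{j}$, while on the singular piece both integrands are controlled brutally using $|\psi|,\psi^{2}\le 1$.

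The core step is to compare the Hellinger quantities involving $A_{i}$ with those involving $R_{i}$. Since $Q_{i},Q'_{i}\ll\mu_{i}$ while $S_{i}\perp\mu_{i}$, the affinities satisfy $\rho(S_{i},Q_{i})=\rho(S_{i},Q'_{i})=0$, and the concavity of $\rho$ in each argument gives $\rho(R_{i},Q'_{i})\ge\alpha_{i}\rho(A_{i},Q'_{i})$, hence
\[
h^{2}(R_{i},Q'_{i})\le(1-\alpha_{i})+\alpha_{i}h^{2}(A_{i},Q'_{i}).
\]
In the opposite direction, the elementary bound $\sqrt{r_{i}}\le\sqrt{\alpha_{i}a_{i}}+\sqrt{(1-\alpha_{i})s_{i}}$ (writing $r_{i},a_{i},s_{i}$ for the densities with respect to a common dominating measure) produces $\rho(R_{i},Q_{i})\le\sqrt{\alpha_{i}}\rho(A_{i},Q_{i})$ and therefore
\[
h^{2}(R_{i},Q_{i})\ge(1-\sqrt{\alpha_{i}})+\sqrt{\alpha_{i}}h^{2}(A_{i},Q_{i})\ge(1-\sqrt{\alpha_{i}})+\alpha_{i}h^{2}(A_{i},Q_{i}).
\]

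Assuming $0<\alpha_{i}<1$, combining the two displays yields
\[
a_{0}h^{2}(R_{i},Q_{i})-a_{1}h^{2}(R_{i},Q'_{i})\ge\alpha_{i}\cro{a_{0}h^{2}(A_{i},Q_{i})-a_{1}h^{2}(A_{i},Q'_{i})}+\cro{a_{0}(1-\sqrt{\alpha_{i}})-a_{1}(1-\alpha_{i})}.
\]
The bracketed additive term is at least $1-\alpha_{i}$ precisely when $a_{0}\ge(1+a_{1})(1+\sqrt{\alpha_{i}})$, which is guaranteed by the assumption $a_{0}\ge 2(1+a_{1})$. Combined with the hypothesis applied to $A_{i}$ and the crude bound $\int\psi\,dS_{i}\le 1$, this delivers~\eref{eq-esp} for $R_{i}$. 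For~\eref{eq-var} the analogous computation reduces to the condition $1-\alpha_{i}\le 2a_{2}^{2}(1-\sqrt{\alpha_{i}})$, that is $1+\sqrt{\alpha_{i}}\le 2a_{2}^{2}$, which follows from $a_{2}\ge 1$. Finally the boundary cases are immediate: for $\alpha_{i}=1$ the hypothesis applies directly, and for $\alpha_{i}=0$ one has $h^{2}(R_{i},Q_{i})=h^{2}(R_{i},Q'_{i})=1$, so the right-hand sides are $a_{0}-a_{1}\ge 2+a_{1}$ and $2a_{2}^{2}\ge 2$, both exceeding the trivial bound $1$ on the left-hand sides.

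The main obstacle will be the tight bookkeeping of the Hellinger comparison: the lower bound on $h^{2}(R_{i},Q_{i})$ involves $\sqrt{\alpha_{i}}$ rather than $\alpha_{i}$, so the extra factor $1+\sqrt{\alpha_{i}}$ in the required inequality is exactly what forces the quantitative assumption $a_{0}\ge 2(1+a_{1})$ in the statement.
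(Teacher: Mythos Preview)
Your proof is correct and follows essentially the same route as the paper: Lebesgue decomposition of each $R_{i}$ with respect to $\mu_{i}$, the key identity $\rho(R_{i},Q_{i})=\sqrt{\alpha_{i}}\,\rho(A_{i},Q_{i})$ (the paper parametrizes with $\delta=\sqrt{\alpha_{i}}$ and uses this equality directly, while you reach it via subadditivity of the square root), and the resulting reduction to $a_{0}\ge(1+a_{1})(1+\sqrt{\alpha_{i}})$, which is exactly the condition the paper obtains. The only cosmetic difference is that the paper substitutes the exact relation $h^{2}(R_{i},Q_{i})=(1-\delta)+\delta h^{2}(R'_{i},Q_{i})$ to rewrite everything in terms of $h^{2}(R_{i},\cdot)$, whereas you keep $h^{2}(A_{i},\cdot)$ in the intermediate step and use the hypothesis on $A_{i}$; both bookkeepings yield the same final inequality.
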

\begin{proof}
Let $\gq,\gq'\in\cbQ$ and $\gR$ be some probability on $(\sbX,\sbB)$ which is not necessarily absolutely continuous with respect to $\gmu$. We stick here to the notations introduced in Section~\ref{ES3} and drop the index $i$. We may write $R=\delta^{2} R'+(1-\delta^{2})R''$ where $R'$ is a probability which is absolutely continuous with respect to $\mu$, $R''$ is a probability which is orthogonal to $\mu$ and  $\delta\in [0,1]$. Taking $
\overline \mu=R+Q$ which dominates both $R$ and $Q$ and using the fact that $(dR''/d\overline \mu)(dQ/d\overline \mu)=0$, $\overline \mu$-a.e. since $Q=q\cdot \mu$ is orthogonal to $R''$, we get
\begin{align*}
\rho(R,Q)&=\int_{\sX}\sqrt{\left[\delta^{2} {dR'\over d\overline \mu}+(1-\delta^{2}){dR''\over d\overline \mu}\right]{dQ\over d\overline \mu}}\,d\overline \mu\\
&=\int_{\sX}\delta\sqrt{{dR'\over d\overline \mu}{dQ\over d\overline \mu}}\,d\overline \mu=\delta\rho(R',Q)\le\delta.
\end{align*}
Hence $\delta h^{2}(R',Q)=\delta-\delta\rho(R',Q)=\delta-\rho(R,Q)$, so that
\begin{equation}
\delta h^{2}(R',Q)=\delta-1+h^{2}(R,Q)\qquad\mbox{and}\qquad h^{2}(R,Q)\ge1-\delta,
\label{eq-2et}
\end{equation}
with similar results for $Q'=q'\cdot \mu$. Then, applying~\eref{eq-var} to $R'\ll \mu$ and using the fact that $a_{2}\ge1\ge|\psi|$, we derive from (\ref{eq-2et}) that
\begin{align*}
\int_{\sX}\psi^{2}\pa{\sqrt{q'\over q}}dR&\le\delta^{2}\int_{\sX}\psi^{2}\pa{\sqrt{q'\over q}}dR'+1-\delta^{2}\\&\le\delta^{2} a_{2}^{2}\cro{h^{2}(R',Q)+h^{2}(R',Q')}+a_{2}^{2}(1-\delta^{2})\\&=
a_{2}^{2}\left[2\delta^{2} -2\delta+\delta\left[h^{2}(R,Q)+h^{2}(R,Q')\right]+1-\delta^{2}\right]\\
&=a_{2}^{2}\left[(1-\delta)^{2}+\delta A\right],
\end{align*}
with $A=h^{2}(R,Q)+h^{2}(R,Q')\ge2(1-\delta)$, hence $(1-\delta)^{2}+\delta A\le (1-\delta)(A/2)+ \delta A\le A$ which leads to (\ref{eq-var}).

Let us now focus on (\ref{eq-esp}). The same computations, using \eref{eq-2et}, lead to
\begin{align*}
\lefteqn{\int_{\sX}\psi\pa{\sqrt{q'\over q}}dR}\hspace{25mm}\\\le&\;\delta^{2}\cro{a_{0}h^{2}(R',Q)-a_{1}h^{2}(R',Q')}+1-\delta^{2}\\
=&\;\delta(\delta-1)(a_{0}-a_{1})+\delta\cro{a_{0}h^{2}(R,Q)-a_{1}h^{2}(R,Q')}+1-\delta^{2}\\
=&\;a_{0}h^{2}(R,Q)-a_{1}h^{2}(R,Q')\\&-(1-\delta)\left[\delta(a_{0}-a_{1})+a_{0}h^{2}(R,Q)-a_{1}h^{2}(R,Q')-1-\delta\right]\\\le&\;a_{0}h^{2}(R,Q)-a_{1}h^{2}(R,Q')\\
&\;-(1-\delta)\left[\delta(a_{0}-a_{1})+a_{0}(1-\delta)-a_{1}-1-\delta\right]\\
=&\;a_{0}h^{2}(R,Q)-a_{1}h^{2}(R,Q')-(1-\delta)\left[a_{0}-(a_{1}+1)(1+\delta)\right].
\end{align*}
Our conclusion follows since $a_{0}-(a_{1}+1)(1+\delta)\ge a_{0}-2(a_{1}+1)\ge 0$ in view of our assumption on $a_{0}$ and $a_{1}$.
\end{proof}
Let us now proceed with the proof of Proposition~\ref{prop-universel}.
For all $i\in\{1,\ldots,n\}$, let $\nu_{i}$ be a privileged probability measure for $\sQ_{i}=\{Q_{i},\ \gQ\in\sbQ\}$, which means that, for all $A\in\sB_{i}$,
\begin{equation}\label{eq-pvl}
\nu_{i}(A)=0\quad\mbox{if and only if}\quad Q_{i}(A)=0\ \ \mbox{for all }Q_{i}\in\sQ_{i}.
\end{equation}
Such a dominating probability measure exists as soon as  $\sQ_{i}$ is separable with respect to the Hellinger distance which is the case here since $\sQ_{i}$ is countable. We may therefore consider a representation $(\gnu,\cbT)$ of $\sbQ$ based on $\gnu=(\nu_{1},\ldots,\nu_{n})$. Let $(\gmu,\cbQ)$ be an alternative one. For each $i$, $\nu_{i}\ll\mu_{i}$ and $\mu_{i}$ decomposes (in a unique way) as $\mu_{i}=\mu_{i}'+\mu''_{i}$ where $\mu_{i}'\ll\nu_{i}$, hence  $\mu_{i}'=m_{i}\cdot\nu_{i}$ for some non-negative  function $m_{i}$ on $\sX_{i}$, and $\mu''_{i}$ is orthogonal to $\nu_{i}$. Consequently, for all $i\in\{1,\ldots,n\}$ and $Q_{i}\in\sQ_{i}$,
\[
Q_{i}=q_{i}\cdot \mu_{i}=q_{i}m_{i}\cdot \nu_{i}+q_{i}\cdot\mu_{i}''=t_{i}\cdot\nu_{i}\ \ \mbox{with}\ \ q_{i}\in\cQ_{i}\ \mbox{and}\ t_{i}\in\cT_{i}.
\]
Since $Q_{i}\ll\nu_{i}$, $q_{i}=0$ $\mu_{i}''$-a.e.\ and $t_{i}=q_{i}m_{i}$ $\nu_{i}$-a.e. Moreover $Q_{i}(\{m_{i}=0\})=0$ for all $Q_{i}\in\sQ_{i}$, hence $\nu_{i}(\{m_{i}=0\})=0$ and $m_{i}>0$ $\nu_{i}$-a.e. If $Q_{i}'=t_{i}'\cdot \nu_{i}=q_{i}'\cdot \mu_{i}\in\sQ_{i}$ then $t'_{i}=q'_{i}m_{i}$ $\nu_{i}$-a.e.\ and
\begin{equation}\label{eq-condnu2}
\psi\pa{\sqrt{q'_{i}\over q_{i}}}=\psi\pa{\sqrt{q'_{i}m_{i}\over q_{i}m_{i}}}=\psi\pa{\sqrt{t'_{i}\over t_{i}}}\;\;\nu_{i}\mbox{-a.e.}
\end{equation}
Since $q_{i}=q_{i}'=0$ $\mu_{i}''$-a.e., the first equality in (\ref{eq-condnu2}) is also true $\mu_{i}''$-a.e.\ for all $\gq,\gq'\in\cbQ$ and $i\in\{1,\ldots,n\}$. Consequently, for any probability measure $\gR\in\sP$ such that $\gR\ll \gmu$ and all $\gq,\gq'\in\cbQ$,
\begin{equation}\label{eq-condnu}
\psi\pa{\sqrt{q'_{i}\over q_{i}}}=\psi\pa{\sqrt{q'_{i}m_{i}\over q_{i}m_{i}}}\;\;R_{i}\mbox{-a.s.}\;\mbox{ for all }i\in\{1,\ldots,n\}.
\end{equation}
If for the representation $(\gmu,\cbQ)$ of $\sbQ$ the inequalities \eref{eq-esp} and~\eref{eq-var} hold for all $\gR\ll\gmu$, they hold for all $\gR\ll \gnu\ll \gmu$ and it follows from~\eref{eq-condnu2} that they also hold for the representation $(\gnu,\cbT)$ and all $\gR\ll\gnu$. Conversely, if the inequalities \eref{eq-esp} and~\eref{eq-var} are satisfied for the representation $(\gnu,\cbT)$ and all $\gR\ll\gnu$, they also hold for the representation 
\[
(\gnu,\cbT'),\quad\cbT'=\{(q_{1}m_{1},\ldots,q_{n}m_{n}),\ \gq\in\cbQ\},
\]
and all $\gR\ll\gnu$ because of~\eref{eq-condnu2}. Lemma~\ref{lem-ext} then shows that they also hold for the representation $(\gnu,\cbT')$ and all $\gR\in\sbP$, therefore also for the representation $(\gmu,\cbQ)$ and all $\gR\ll\gmu$ by~\eref{eq-condnu}. This completes the proof.


\subsection{Proof of Proposition~\ref{prop-expsi}}\label{pprop-expsi}
{
It is clear that both functions are monotone and satisfy~\eref{H-phi}. Let $\sbQ$ be an arbitrary $\rho$-model and $(\gmu,\cbQ)$ a representation of it. In view of Proposition~\ref{prop-universel}, it is enough to prove \eref{eq-esp} and~\eref{eq-var} when $\gR=\gS\ll\gmu$, which we shall assume in the sequel, denoting by $\gs$ the corresponding density. As in the proof of Lemma~\ref{Cond-Lips}, we fix some $i\in\{1\ldots,n\}$, $\gq,\gq'\in\cbQ$ and then drop the index $i$ in the notations to establish~\eref{eq-esp} and~\eref{eq-var}. Given two densities $t,t'$ on $(\sX,\sB,\mu)$ we shall write $h(t,t')$ and $\rho(t,t')$ for the Hellinger distance and the Hellinger affinity between the probabilities $t\cdot \mu$ and $t'\cdot\mu$.
The proof will repetedly use that $(a+b)^{2}\le (1+\alpha)a^{2}+\left(1+\alpha^{-1}\right)b^{2}$ for all $\alpha>0$.

\paragraph{{\bf Case of the function $\psi_{1}$}.}
Let $r=(q+q')/2$. Our conventions $0/0=1$ and $a/0=+\infty$ for $a>0$ imply that the equalities
\begin{equation}
\psi_{1}\left(\sqrt{q'\over q}\right)=\frac{\sqrt{q'}-\sqrt{q}}{\sqrt{q+q'}}\1_{r>0}={\sqrt{q'}-\sqrt{q}\over \sqrt{2r}}\1_{r>0}
\label{eq-psi1-0}
\end{equation}
hold for all densities $q,q'$. Moreover, by the concavity of the square root,
\begin{align}
h^{2}(s,r)&=1-\rho(s,r)=1-\int\sqrt{\frac{sq+sq'}{2}}\,d\mu\nonumber\\&\le1-\frac{1}{2}
\left[\int\sqrt{sq}\,d\mu+\int\sqrt{sq'}\,d\mu\right]=\frac{1}{2}\left[h^{2}(s,q)+h^{2}(s,q')\right].
\label{eq-h}
\end{align}
Squaring (\ref{eq-psi1-0}), integrating with respect to $S=s\cdot\mu$, using the bound 
$\left(\sqrt{q'}-\sqrt{q}\right)^{2}\le2r$ and then (\ref{eq-h}), we get, 
\begin{align*}
\int_{\sX}\psi_{1}^{2}\left(\sqrt{q'\over q}\right)s\,d\mu=&\;\int_{r>0}\frac{\left(\sqrt{q'}-\sqrt{q}\right)^{2}}{2r}\left(\sqrt{s}-\sqrt{r}+\sqrt{r}\right)^{2}d\mu\\ 
\le&\;\int_{r>0}\left[(1+\alpha)\frac{\left(\sqrt{q'}-\sqrt{q}\right)^{2}}{2r}\left(\sqrt{s}-\sqrt{r}\right)^{2}\right.\\&\;+\left.\left(1+\alpha^{-1}\right)\frac{\left(\sqrt{q'}-\sqrt{q}\right)^{2}}{2r}r\right]d\mu\\\le&\; 2(1+\alpha)h^{2}(s,r)+\left(1+\alpha^{-1}\right)h^{2}(q,q')\\ 
\le&\; (1+\alpha)\left[h^{2}(s,q)+h^{2}(s,q')\right]\\&\;+2\left(1+\alpha^{-1}\right)\left[h^{2}(s,q)+h^{2}(s,q')\right].
\end{align*}
Setting $\alpha=\sqrt{2}$ leads to $a_{2}^{2}=3+2\sqrt{2}$ which proves (\ref{eq-var}).

The proof of~\eref{eq-esp} is based on (\ref{eq-h}) and the following inequalities:
\begin{equation}
0\le \sqrt{a+b\over 2}-{\sqrt{a}+\sqrt{b}\over 2}\le {\sqrt{2}-1\over 2}\left|\sqrt{a}-\sqrt{b}\right|
\quad\mbox{for all }a,b\ge0.
\label{eq-rac}
\end{equation}
The concavity of the square root leads to the left-hand side of~\eref{eq-rac}. For the right-hand side, note that $z\mapsto \sqrt{(1+z^{2})/2}$ is convex and its graph being under any of its chords, for all $z\in [0,1]$,
\[
 \sqrt{1+z^{2}\over 2}\le {1\over \sqrt{2}}+z\left(1-{1\over \sqrt{2}}\right)={1+z\over 2}+{\sqrt{2}-1\over 2}(1-z).
\]
The result follows by applying this inequality to $z=\sqrt{(a\wedge b)/(a\vee b)}$ when $a\vee b\neq 0$, the case $a\vee b= 0$ being trivial.

Let us now turn to the proof of~\eref{eq-esp} and let 
\[
A=\int_{\sX}\psi_{1}\left(\sqrt{q'\over q}\right)s\,d\mu-\int_{r>0}{\sqrt{q'}-\sqrt{q}\over \sqrt{2r}}\left(\sqrt{s}-\sqrt{r}\right)^{2}d\mu.
\]
We derive from (\ref{eq-psi1-0}) that
\begin{align*}
A&=\int_{r>0}{\sqrt{q'}-\sqrt{q}\over \sqrt{2r}}\left(\sqrt{s}-\sqrt{r}+\sqrt{r}\right)^{2}d\mu-\int_{r>0}{\sqrt{q'}-\sqrt{q}\over \sqrt{2r}}\left(\sqrt{s}-\sqrt{r}\right)^{2}d\mu\\
&=\int_{r>0}\left(\sqrt{q'}-\sqrt{q}\right)\!\left(\sqrt{2s}-\sqrt{\frac{r}{2}}\right)d\mu=
\int\left(\sqrt{q'}-\sqrt{q}\right)\!\left(\sqrt{2s}-\sqrt{\frac{r}{2}}\right)d\mu\\
&=\sqrt{2}\left[\rho(s,q')-\rho(s,q)\right]+\int\frac{\sqrt{q}-\sqrt{q'}}{\sqrt{2}}\sqrt{r}\,d\mu-\int\frac{q-q'}{2\sqrt{2}}\,d\mu\\&=\sqrt{2}\left[h^{2}(s,q)-h^{2}(s,q')\right]+
\int\frac{\sqrt{q}-\sqrt{q'}}{\sqrt{2}}\left[\sqrt{\frac{q+q'}{2}}-{\sqrt{q}+\sqrt{q'}\over 2}\right]d\mu.
\end{align*}
The inequality $|\sqrt{q}-\sqrt{q'}|\le \sqrt{2r}$ and~\eref{eq-h} imply that
\[
\int_{r>0}{\sqrt{q'}-\sqrt{q}\over \sqrt{2r}}\left(\sqrt{s}-\sqrt{r}\right)^{2}d\mu\le 2h^{2}(s,r)\le h^{2}(s,q)+h^{2}(s,q')
\]
and~\eref{eq-rac} yields
\[
\int\frac{\sqrt{q}-\sqrt{q'}}{\sqrt{2}}\left[\sqrt{\frac{q+q'}{2}}-{\sqrt{q}+\sqrt{q'}\over 2}\right]d\mu\le \frac{\sqrt{2}-1}{\sqrt{2}}h^{2}(q,q').
\]
Therefore 
\begin{align*}
\lefteqn{\int_{\sX}\psi_{1}\left(\sqrt{q'\over q}\right)s\,d\mu}\hspace{15mm}\\&
=A+\int_{r>0}{\sqrt{q'}-\sqrt{q}\over \sqrt{2r}}\left(\sqrt{s}-\sqrt{r}\right)^{2}d\mu\\&\le
\left(1+\sqrt{2}\right)h^{2}(s,q)-\left(\sqrt{2}-1\right)h^{2}(s,q')+\frac{\sqrt{2}-1}{\sqrt{2}}h^{2}(q,q'),
\end{align*}
hence
\begin{align*}
\sqrt{2}\int_{\sX}\psi_{1}\left(\sqrt{q'\over q}\right)s\,d\mu\le&\,\left[\left(2+\sqrt{2}\right)+\left(\sqrt{2}-1\right)(1+\alpha)\right]h^2(s,q)\\
&\,-\left(\sqrt{2}-1\right)\left[\sqrt{2}-\left(1+\alpha^{-1}\right)\right]h^2(s,q').
\end{align*}
The choice $\alpha=7.7$ implies \eref{eq-esp}.

\paragraph{{\bf Case of the function $\psi_{2}$}}
Let us set $r=[(\sqrt{q}+\sqrt{q'})/\delta]^{2}$ with
\[
\delta^{2}=\int_{\sX}\!\pa{\sqrt{q}+\sqrt{q'}}^{2}d\mu=2\left[1+\rho(q,q')\right]=4\left[1-\frac{h^2(q,q')}{2}\right],
\]
so that $r$ is the density of a probability on $(\sX,\sB)$. Moreover,
\begin{equation}
\sqrt{2}\le\delta\le2\qquad\mbox{and}\qquad\frac{2}{\delta}=\frac{1}{\sqrt{1-(1/2)h^2(q,q')}}\ge1+\frac{h^{2}(q,q')}{4}
\label{eq-psi2-0}
\end{equation}
by the convexity of the map $u\mapsto {1/\sqrt{1-u}}$. Consequently,
\begin{align}
h^{2}(s,r)&=1-\int_{\sX}\sqrt{sr}\,d\mu=1-{1\over \delta}\cro{\rho(s,q')+\rho(s,q)}
\label{eq-R0}
\\&=1-{2\over \delta}+{1\over \delta}\left[h^{2}(s,q)+h^{2}(s,q')\right]\nonumber\\&\le
\frac{h^{2}(s,q)+h^{2}(s,q')}{\delta}-\frac{h^{2}(q,q')}{4}.
\label{eq-R}
\end{align}
The previous computations with this new value of $r$ lead to 
\begin{align*}
\int_{\sX}\psi_{2}^{2}\left(\sqrt{q'\over q}\right)s\,d\mu=\:&\int_{r>0}\left(\frac{\sqrt{q'}-\sqrt{q}}{\sqrt{q'}+\sqrt{q}}\right)^{2}\left(\sqrt{s}-\sqrt{r}+\sqrt{r}\right)^{2}d\mu\\\le\:&(1+\alpha)\int_{r>0}\left(\frac{\sqrt{q'}-\sqrt{q}}{\sqrt{q'}+\sqrt{q}}\right)^{2}\left(\sqrt{s}-\sqrt{r}\right)^{2}d\mu\\&+\left(1+\alpha^{-1}\right)\int_{r>0}\left(\frac{\sqrt{q'}-\sqrt{q}}{\sqrt{q'}+\sqrt{q}}\right)^{2}\left({\sqrt{q}+\sqrt{q'}\over\delta}\right)^2
d\mu\\\le\:&2(1+\alpha)h^{2}(s,r)+2\left(1+\alpha^{-1}\right)\delta^{-2}h^{2}(q,q')
\end{align*}
and by (\ref{eq-R}),
\begin{align*}
\lefteqn{\int_{\sX}\psi_{2}^{2}\left(\sqrt{q'\over q}\right)s\,d\mu}\hspace{10mm}\\
&\le2(1+\alpha)\left[\frac{h^{2}(s,q)+h^{2}(s,q')}{\delta}-\frac{h^{2}(q,q')}{4}\right]+2\left(1+\alpha^{-1}\right)\frac{h^{2}(q,q')}{\delta^{2}}\\&=2(1+\alpha)\delta^{-1}\left[h^{2}(s,q)+h^{2}(s,q')\right]
\end{align*}
provided that $(1+\alpha)/4=\delta^{-2}\left(1+\alpha^{-1}\right)$. Solving this equation with respect to $\alpha$ leads to $\alpha=4\delta^{-2}$ hence $2(1+\alpha)\delta^{-1}=2\delta^{-3}
\left(\delta^{2}+4\right)$ which is a decreasing function of $\delta$. We then conclude from (\ref{eq-psi2-0}) that $2(1+\alpha)\delta^{-1}\le3\sqrt{2}$ which gives $a_{2}^{2}=3\sqrt{2}$.

Let us now turn to the proof of~\eref{eq-esp}, setting
\begin{equation}
\rho_{r}(S,q)={1\over 2}\cro{\int_{r>0}\sqrt{qr}\,d\mu+\int_{r>0}\sqrt{q\over r}\,dS}.
\label{eq-rho-r}
\end{equation}
Then
\begin{align*}
\int_{r>0}&\sqrt{q\over r}\,dS\\
&=\int_{r>0}\sqrt{q\over r}s\,d\mu=\int_{r>0}\sqrt{q\over r}\left(\sqrt{s}-\sqrt{r}+\sqrt{r}\right)^{2}d\mu\\
&=
\int_{r>0}\sqrt{q\over r}\left(\sqrt{s}-\sqrt{r}\right)^{2}d\mu+\int_{r>0}\sqrt{qr}\,d\mu+2\int_{r>0}\sqrt{q}
\left(\sqrt{s}-\sqrt{r}\right)d\mu\\&=\int_{r>0}\sqrt{q\over r}\left(\sqrt{s}-\sqrt{r}\right)^{2}d\mu-\int_{r>0}\sqrt{qr}\,d\mu+2\int_{r>0}\sqrt{qs}\,d\mu,
\end{align*}
so that, since $r=0$ implies $q=0$,
\begin{align*}
\rho_{r}(S,q)&=\rho(s,q)+{1\over 2}\int_{r>0}\sqrt{q\over r}\pa{\sqrt{s}-\sqrt{r}}^{2}d\mu\\
&=\rho(s,q)+{\delta\over 2}\int_{r>0}{\sqrt{q}\over \sqrt{q'}+\sqrt{q}}\pa{\sqrt{s}-\sqrt{r}}^{2}d\mu.
\end{align*}
Moreover, since $q$ and $q'$ are densities, (\ref{eq-rho-r}) leads to
\begin{align*}
\lefteqn{\rho_{r}(S,q')-\rho_{r}(S,q)}\hspace{6mm}\\
&={1\over 2\delta}\int_{r>0}\pa{\sqrt{q'}-\sqrt{q}}\pa{\sqrt{q'}+\sqrt{q}}d\mu +{\delta\over 2}\int_{r>0}{\sqrt{q'}-\sqrt{q}\over \sqrt{q'}+\sqrt{q}}\,dS\\
&={1\over 2\delta}\int_{r>0}\pa{q'-q}d\mu+{\delta\over 2}\int_{r>0}\psi_{2}\pa{\sqrt{q'\over q}}dS={\delta\over 2}\int_{\sX}\psi_{2}\pa{\sqrt{q'\over q}}dS,
\end{align*}
since $q=q'=0$ when $r=0$ and, by convention, $\psi_{2}(0/0)=\psi_{2}(1)=0$. Putting everything together, we derive that
\begin{align*}
{\delta\over 2}\int_{\sX}\psi_{2}\pa{\sqrt{q'\over q}}dS&=\rho(s,q')- \rho(s,q)+{\delta\over 2}
\int_{r>0}{\sqrt{q'}-\sqrt{q}\over \sqrt{q'}+\sqrt{q}}\pa{\sqrt{s}-\sqrt{r}}^{2}d\mu\\&=\rho(s,q')- \rho(s,q)+{\delta\over 2}\int_{r>0}\psi_{2}\pa{\sqrt{q'\over q}}\!\pa{\sqrt{s}-\sqrt{r}}^{2}d\mu.
\end{align*}
Since $|\psi_{2}|$ is bounded by 1 we derive from \eref{eq-R0} that
\[
{\delta\over 2}\int_{\sX}\psi_{2}\pa{\sqrt{q'\over q}}dS\le \rho(s,q')- \rho(s,q)+\delta h^{2}(s,r)=\delta-2\rho(s,q),
\]
hence, by (\ref{eq-psi2-0}), 
\begin{align}
\int_{\sX}\psi_{2}\pa{\sqrt{q'\over q}}dS&\le2\cro{1-{2\rho(s,q)\over \delta}}\le2\cro{1-\rho(s,q)\pa{1+{h^{2}(q,q')\over 4}}}\nonumber\\&=\nonumber
2\cro{h^{2}(s,q)\pa{1+{h^{2}(q,q')\over 4}}-{h^{2}(q,q')\over 4}}\\&\le{1\over 2}\cro{5h^{2}(s,q)-h^{2}(q,q')}.
\label{eq-psi21}
\end{align}
Since $h(q,q')\ge \ab{h(s,q)-h(s,q')}$, we deduce from~\eref{eq-psi21} with $\alpha=4$ that,
as claimed,
\begin{align*}
\lefteqn{\int_{\sX}\psi_{2}\pa{\sqrt{q'\over q}}dS}\hspace{10mm}\\
&\le{1\over 2}\cro{5h^{2}(s,q)-\pa{h^{2}(s,q)+h^{2}(s,q')-2\alpha^{1/2} h(s,q)\alpha^{-1/2}h(s,q')}}\\
&\le{1\over 2}
\cro{(4+\alpha)h^{2}(s,q)-(1-\alpha^{-1})h^{2}(s,q')}=4h^{2}(s,q)-{3\over 8}h^{2}(s,q').
\end{align*}

\subsection{Proof of Proposition~\ref{prop-gw}}\label{sect-pprop-gw}
Let us fix $\gP$ and $\overline \gP$ in $\sbP$.
Since 
\[
w(\cR,\sbQ,\gP,\overline \gP,y)=w(\cR,\sbQ\cup\{\overline \gP\},\gP,\overline \gP,y)
\] 
we may assume with no loss of generality that $\overline \gP\in \sbQ$ and that $\cR_{1}=\cR$ is a representation of $\sbQ$. Let us now consider another representation $\cR_{2}=(\gnu,{\cbU})$ of $\sbQ$ that we may therefore alternatively write as  $\sbQ=\{\gu\cdot\gnu,\,\gu\in\cbU\}$. It is suffices to prove that
\begin{equation}\label{eq-amontrer}
w(\cR_{1},\sbQ,\gP,\overline \gP,y)\le w(\cR_{2},\sbQ,\gP,\overline \gP,y)+8\gh^{2}(\gP,\overline \gP)\quad \mbox{for all $y>0$.}
\end{equation}
 
For all $i\in\{1,\ldots,n\}$, $\mu_{i}$ can be decomposed (in a unique way) into a part which is absolutely continuous with respect to $\nu_{i}$, denoted $\mu_{i}'=m_{i}\cdot \nu_{i}$ and a part which is orthogonal to $\nu_{i}$, denoted $\mu_{i}''$. If $\gQ\in\sbQ$ with $\gQ=\gq\cdot\gmu=\gu\cdot\gnu$, then
\[
Q_{i}=q_{i}\cdot \mu_{i}=q_{i}m_{i}\cdot \nu_{i}+ q_{i}\cdot\mu_{i}''=u_{i}\cdot \nu_{i}\quad\mbox{for }i\in\{1,\ldots,n\}.
\]
Let 
\[
A_{i}(\gQ)=\{x\in\sX_{i}\,|\, u_{i}(x)=q_{i}(x)m_{i}(x)\}\;\mbox{ for }\gQ\in\sbQ\quad\mbox{and}\quad B_{i}=\{m_{i}>0\}. 
\]
For all $\gQ\in\sbQ$, $Q_{i}\ll\nu_{i}$ so that $u_{i}=q_{i}m_{i}$ $\nu_{i}$-a.e., hence $\overline{P}_{i}(A_{i}(\gQ)^{c})=\nu_{i}(A_{i}(\gQ)^{c})=0$. Moreover $\overline{P}_{i}(B_{i}^{c})=0$. Finally, since $\sbQ$ is countable,
\[
\overline{P}_{i}(A_{i}^{c})=0\quad\mbox{with}\quad A_{i}=\left(\bigcap_{\gQ\in\sbQ}A_{i}(\gQ)\right)\bigcap B_{i}\quad\mbox{for all }i\in\{1,\ldots,n\}.
\]
If $\overline{\gP}=\overline{\gp}\cdot \gmu=\overline{\gs}\cdot \gnu$, it then follows from the definition of $A_{i}$ that
\begin{equation}
\psi\pa{\sqrt{q_{i}\over\overline{p}_{i}}(x_{i})}\1_{A_{i}}(x_{i})=\psi\pa{\sqrt{u_{i}\over\overline{s}_{i}}(x_{i})}\1_{A_{i}}(x_{i})\:\mbox{ for all }x_{i}\in\sX_{i}\mbox{ and }Q_{i}\in \sQ_{i}.
\label{eq-nupp}
\end{equation}
For $\gq\in \cbQ$ and $\gx\in\sbX$, let
\[
\left\{
\begin{array}{lll}  
\gT_{\gA}(\gx,\overline \gp,\gq)&=&\dps{\sum_{i=1}^{n}\psi\pa{\sqrt{q_{i}\over \overline p_{i}}(x_{i})}\1_{A_{i}}(x_{i})},\\
\gT_{\gA^{\!c}}(\gx,\overline \gp,\gq)&=&\dps{\sum_{i=1}^{n}\psi\pa{\sqrt{q_{i}\over \overline p_{i}}(x_{i})}\1_{A_{i}^{c}}(x_{i})},\\
\gZ_{\gA}(\bsX,\overline{\gp},\gq)&=&\gT_{\gA}(\bsX,\overline \gp,\gq)-\E\cro{\gT_{\gA}(\bsX,\overline \gp,\gq)}.
\end{array}\right.
\]
Define $\gT_{\gA}(\gx,\overline \gs ,\gu)$, $\gT_{\gA^{\!c}}(\gx,\overline \gs,\gu)$ and $\gZ_{\gA}(\gx,\overline \gs ,\gu)$ in the same way for $\gu\in\cbU$. Then
\[
\gZ(\bsX,\overline{\gp},\gq)=\gZ_{\gA}(\bsX,\overline{\gp},\gq)+
\left(\st\gT_{\gA^{\!c}}(\bsX,\overline \gp,\gq)-\E\cro{\gT_{\gA^{\!c}}(\bsX,\overline \gp,\gq)}\right),
\]
with a similar decomposition for $\gZ(\bsX,\overline \gs,\gu)$. Since $|\psi|\le 1$, 
\begin{equation}
\ab{\st\gT_{\gA^{\!c}}(\bsX,\overline \gp,\gq)}\le \sum_{i=1}^{n}\1_{A_{i}^{c}}(X_{i})\quad\mbox{and}\quad
\E\cro{\sum_{i=1}^{n}\1_{A_{i}^{c}}(X_{i})}=\sum_{i=1}^{n}P_{i}(A_{i}^{c}).
\label{eq-TAc}
\end{equation}
Besides, since $\overline{P}_{i}(A_{i}^{c})=0$, if the measure $\lambda_{i}$ dominates both $P_{i}$ and $\overline{P}_{i}$,
\begin{equation}
P_{i}(A_{i}^{c})=\int_{A_{i}^{c}}\left(\sqrt{\frac{d\overline{P}_{i}}{d\lambda_{i}}}-\sqrt{\frac{dP_{i}}{d\lambda_{i}}}\right)^{2}d\lambda_{i}\le2h^{2}(\overline P_{i},P_{i}).
\label{eq-Aic}
\end{equation}
This implies that
\begin{equation}
\left|\st\gT_{\gA^{\!c}}(\bsX,\overline \gp,\gq)-\E\cro{\gT_{\gA^{\!c}}(\bsX,\overline \gp,\gq)}\right|
\le \sum_{i=1}^{n}\1_{A_{i}^{c}}(X_{i})+2\gh^{2}(\overline \gP,\gP),
\label{eq-Aicb}
\end{equation}
with the same bound for $\left|\st\gT_{\gA^{\!c}}(\bsX,\overline \gs,\gu)-\E\cro{\gT_{\gA^{\!c}}(\bsX,\overline \gs,\gu)}\right|$ when $\gu\in\cbU$. For all $\gQ\in\sbQ$, by (\ref{eq-nupp}),
$\gZ_{\gA}(\bsX,\overline \gp,\gq)=\gZ_{\gA}(\bsX,\overline \gs ,\gu)$, which, together with \eref{eq-Aicb} implies that, for all $\gQ=\gq\cdot \gmu=\gu\cdot \gnu$,
\begin{align*}
\ab{\gZ(\bsX,\overline{\gp},\gq)}\le&\; \ab{\gZ_{\gA}(\bsX,\overline \gp,\gq)}+\left|\st\gT_{\gA^{\!c}}(\bsX,\overline \gp,\gq)-\E\cro{\gT_{\gA^{\!c}}(\bsX,\overline \gp,\gq)}\right|\\
=&\; \ab{\gZ_{\gA}(\bsX,\overline \gs,\gu)}+\left|\st\gT_{\gA^{\!c}}(\bsX,\overline \gp,\gq)-\E\cro{\gT_{\gA^{\!c}}(\bsX,\overline \gp,\gq)}\right|\\
\le&\; \ab{\gZ(\bsX,\overline \gs,\gu)}+\left|\st\gT_{\gA^{\!c}}(\bsX,\overline \gp,\gq)-\E\cro{\gT_{\gA^{\!c}}(\bsX,\overline \gp,\gq)}\right|\\
&\; +\left|\st\gT_{\gA^{\!c}}(\bsX,\overline \gs,\gu)-\E\cro{\gT_{\gA^{\!c}}(\bsX,\overline \gs,\gu)}\right|\\
\le&\; \ab{\gZ(\bsX,\overline \gs,\gu)}+ 2\cro{\sum_{i=1}^{n}\1_{A_{i}^{c}}(X_{i})+2\gh^{2}
(\overline \gP,\gP)}.
\end{align*}
Taking the supremum with respect to $\gQ\in \sbB^{\sbQ}(\gP,\overline \gP,y)$ then the expectation on both sides of the last inequality, we get 
\begin{align*}
w(\cR_{1},\sbQ,\gP,\overline \gP,y)& \le w(\cR_{2},\sbQ,\gP,\overline \gP,y)+2\cro{\E\cro{\sum_{i=1}^{n}\1_{A_{i}^{c}}(X_{i})}+2\gh^{2}(\overline \gP,\gP)}
\end{align*}
and we conclude by \eref{eq-TAc} and \eref{eq-Aic}.

\subsection{Proof of Proposition~\ref{cas-fini}}\label{sect-pcas-fini}
Let $(\gmu, \cbQ)$ be an arbitrary representation of the finite set $\sbQ\cup\{\overline \gP\}$ so that we may write $\gQ=\gq\cdot \gmu$ and $\overline \gP=\overline \gp\cdot \gmu$ with $\gq,\overline \gp\in\cbQ$. Applying Proposition~50 of Baraud {\em et al.}~\citeyearpar{MR3595933} with $T=\sbB^{\sbQ}(\gP,\overline \gP,y)\subset \sbQ\cap \sbB(\gP,y)$, so that $\log_{+}(2|T|)\le \sbH(\sbQ,y)$ and
$U_{i,t}=\psi\pa{\sqrt{\left(q_{i}/\overline p_{i}\right)(X_{i})}}\in [-1,1]$, for which one may take $b=1$ and  $v^{2}=a_{2}^{2}y^{2}$ by~\eref{eq-var}, we  obtain that for all $y>0$ and $\gP,\overline \gP\in\sbP$
\begin{align*}
\gw^{\sbQ}(\gP,\overline \gP,y)&\le\sbH(\sbQ,y)+a_{2}y\sqrt{2\sbH(\sbQ,y)}
\\&=y^{2}\cro{{\sbH(\sbQ,y)\over y^{2}}+a_{2}\sqrt{{2\sbH(\sbQ,y)\over y^{2}}}}.
\end{align*}
The inequality $x^{2}+\sqrt{2}a_{2}x\le a_{1}/8$ is satisfied for 
\[
0\le x\le \left(a_{2}/\sqrt{2}\right)\left[\sqrt{1+(\beta/a_{2})}-1\right]=\beta x_{0}^{-1}.
\]
It follows from the definition of $\overline \eta$ that, if $\beta y>\overline \eta$, 
\[
\sqrt{\sbH(\sbQ,y)}\le \beta x_{0}^{-1}y\qquad\mbox{hence}\qquad\gw^{\sbQ}\left(\gP,\overline \gP,y\right)\le(a_{1}/8)y^{2}.
\]
Consequently $D^{\sbQ}(\gP,\overline \gP)\le\overline \eta^{2}\vee 1$ by (\ref{eq-bound-D}). The second bound derives from (\ref{Eq-eta}).

\subsection{Proof of Proposition~\ref{cas-VC}}\label{sect-pcas-VC}
By (\ref{eq-RB}), we may restrict ourselves to the case of $\overline{V}\le n/6$.
The proof being similar to that of Theorem~12 in Baraud {\em et al.}~\citeyearpar{MR3595933}, we only provide here a sketch of proof of the result. Let $\overline \gP\in\sbP^{\gmu}$. We may write $\overline \gP=\overline \gp\cdot \gmu$ with $\overline \gp\in\cbL(\gmu)$, and represent $\sbQ$ by $(\gmu,\cbQ)$ for some subset $\cbQ\subset \overline \cbQ$. Let us consider the pair $(\gmu,\cbQ\cup\{\overline \gp\})$ for representing $\sbQ\cup\{\overline \gP\}$.

Since $\psi$ is monotone and $\overline \cbQ$ is VC-subgraph on $\overline \sX$ with index not larger than $\overline V$ so are the set $\ac{\psi\pa{\sqrt{\gq/\overline \gp}},\ \gq\in \cbQ}$ and its subset 
\begin{equation}\label{eq-sF}
\sF^{{\cbQ}}(\gP,\overline \gP,y)=\ac{\left.\psi\pa{\sqrt{\gq/\overline \gp}}\,\right|\,\gQ\in\sbB^{\sbQ}(\gP,\overline \gP,y)}\quad \mbox{for all $y>0$.}
\end{equation}
Since the elements of $\sF^{{\cbQ}}(\gP,\overline \gP,y)$ are bounded by 1, it follows from Theorem~2.6.7 in van der Vaart and Wellner~\citeyearpar{MR1385671} that, for some numerical constant $K$ and any probability measure $R$ on $\overline \sX$, the $\L_{2}(R)$-entropy $\sH(\sF^{\cbQ},R,\cdot)$ of $\sF^{\cbQ}(\gP,\overline \gP,y)$ satisfies for all $\eps\in (0,1)$
\[
\sH(\sF^{\cbQ},R,\eps)\le\log\pa{K\overline V(16e)^{\overline V}}+2(\overline V-1)\log(1/\eps)\le2\overline V\log(A/\eps)
\]
for some numerical constant $A\ge 2e$. Applying Lemma~49 in Baraud {\em et al.}~\citeyearpar{MR3595933} to the random variables $(i,X_{i})\in\overline \sX$, $\sF=\sF^{\cbQ}(\gP,\overline \gP,y)$, $v^{2}=a_{2}^{2}y^{2}$ by~\eref{varT} and $\overline \sH(z)=2\overline V\log_{+}(Az)$ with $z\ge1/2$ (so that $L\le3/2$, according to the proof of Theorem~12 in Baraud {\em et al.}~\citeyearpar{MR3595933}), we get, for some numerical constant $C_{0}>0$,
\[
\gw^{\sbQ}(\gP,\overline \gP,y)\le C_0\cro{a_{2}y\sqrt{H}+H}\quad\mbox{with}\quad H=\overline\sH\pa{{\sqrt{n}\over2a_{2}y}\bigvee\frac{1}{2}}.
\]
Let $D\ge \overline V$ to be chosen later and $y\ge \beta^{-1}\sqrt{D}$. Since $\beta\le 1$, $a_{2}\ge 1$, $\overline V\le n$ and $A\ge 2e$ we deduce that $y\ge \sqrt{\overline V}$, 
\[
H\le\overline{H}=\overline\sH\pa{{\sqrt{n}\over2\sqrt{\overline{V}}}}=2\overline V\log\left(\frac{A\sqrt{n}}{2\sqrt{\overline{V}}}\right)\quad\mbox{and}\quad \overline{H}\ge2\overline{V}.
\]
For all $y\ge \beta^{-1}\sqrt{D}$,
\begin{align*}
\gw^{\sbQ}(\gP,\overline \gP,y)&\le C_0\cro{a_{2}y\sqrt{\overline{H}}+\overline{H}}={ a_{1}\over 8}y^{2}C_{0}\cro{{8a_{2}\over a_{1}}{\sqrt{\overline{H}}\over y}+{8\over a_{1}}{\overline{H}\over y^{2}}}\\
&\le{ a_{1}\over 8}y^{2}C_{0}\cro{{8a_{2}\beta\over a_{1}}{\sqrt{\overline{H}\over D}}+{8\beta^{2}\over a_{1}}{\overline{H}\over D}}.
\end{align*}
Since $\beta=a_{1}/(4a_{2})$ and $a_{2}^{2}\ge 6 a_{1}$, we derive that
\[
\gw^{\sbQ}(\gP,\overline \gP,y)\le { a_{1}\over 8}y^{2}C_{0}\cro{2{\sqrt{\overline{H}\over D}}+{a_{1}\over 2a_{2}^{2}}{\overline{H}\over D}}\le { a_{1}\over 8}y^{2}C_{0}\cro{2{\sqrt{\overline{H}\over D}}+{\overline{H}\over 12 D}}.
\]
The inequality $2u+u^{2}/12\le C_{0}^{-1}$ being satisfied for $u\in [0, \overline u]$ with 
\[
\frac{1}{\overline u}=C_{0}\pa{\sqrt{1+{1\over 12 C_{0}}}+1},
\]
we deduce that, for 
\[
D=\pa{\frac{1}{\overline u^{2}}\vee {1\over 2}}\overline H\ge \max\left\{\frac{\overline H}{\overline u^{2}}, \overline V\right\}
\]
and all $y\ge \beta^{-1}\sqrt{D}$, $\gw^{\sbQ}(\gP,\overline \gP,y)\le a_{1}y^{2}/8$.  We conclude by (\ref{eq-bound-D}).

\subsection{Proof of Proposition~\ref{cas-extreme}}\label{sect-pcas-extreme}
The us fix some $d\in \cD$ and $\overline \gP\in\overline \sbQ_{d}$. The $\rho$-model $\sbQ\cup\{\overline \gP\}$ can be represented by $(\gmu, \cbQ)$ where $\cbQ\subset \overline \cbQ$ contains a density $\overline \gp$ which is extremal in $\overline \cbQ$ with degree $d$ and satisfies $\overline \gP=\overline \gp\cdot \gmu$. The function $\psi$ being monotone, it follows from Proposition~3 of Baraud~\citeyearpar{Bar2016} that the class of functions $\sF^{{\cbQ}}(\gP,\overline \gP,y)$ defined by~\eref{eq-sF} which satisfies 
\[
\sF^{{\cbQ}}(\gP,\overline \gP,y)\subset \ac{\psi(g),\ g\in (\overline{\cbQ}/\overline \gp)}
\]
is weak VC-major on $\overline \sX=\bigcup_{i=1}^{n}\left(\{i\}\times\sX_{i}\right)$ with dimension not larger than $d\ge 1$. Besides, since $\psi$ takes its values in $[-1,1]$, $\sF^{{\cbQ}}(\gP,\overline \gP,y)$ is uniformly bounded by 1. Applying Corollary~1 of Baraud~\citeyearpar{Bar2016} to $\sF^{{\cbQ}}(\gP,\overline \gP,y)$ with $b=1$ and $\sigma^{2}=(a_{2}^{2}y^{2}/n)\wedge 1$ (because of~\eref{varT}) and setting
\[
\overline \Gamma(d)=\log\pa{2\sum_{j=0}^{d}\binom{n}{j}}\le
\log 2+d\log\pa{en\over d}\le dL
\]
with  $L=\log\pa{e^{2}n\over d}\ge2$, we get
\begin{align*}
\gw^{\sbQ}(\gP,\overline \gP,y)&\le\left[4\sqrt{2n\overline \Gamma(d)}\times \sigma\log\pa{e\over\sigma}\right]+16\overline \Gamma(d)\\
&\le\left[4\sqrt{2ndL}\times \sigma\log\pa{e\over\sigma}\right]+16dL\\
&\le4a_{2}y\sqrt{2dL}\log\pa{e\sqrt{{n\over a_{2}^{2}y^{2}}\vee 1}}+16dL.
\end{align*}
Let $D\ge d$ to be chosen later on. Since $a_{2}\ge 1$, $\beta=a_{1}/(4a_{2})\le 1$ and $d\le n$, for all $y\ge \beta^{-1}\sqrt{D}\ge \sqrt{d}$, 
\[
\log\pa{e\sqrt{{n\over a_{2}^{2}y^{2}}\vee 1}}\le \log\pa{e\sqrt{n\over d}}={L\over 2},
\]
hence, since $L\ge2$,
\begin{align*}
\lefteqn{\gw^{\sbQ}(\gP,\overline \gP,y)}\hspace{5mm}\\
&\le2a_{2}y\sqrt{2dL^{3}}+16dL\le 2a_{2}y\sqrt{2dL^{3}}+4dL^{3}\\
&={a_{1}\over 8}y^{2}\cro{{16a_{2}\over a_{1}}\sqrt{dL^{3}\over y^{2}}+{32\over a_{1}}{2dL^{3}\over y^{2}}}\le{a_{1}\over 8}y^{2}\cro{{16a_{2}\beta\over a_{1}}\sqrt{2dL^{3}\over D}+{32\beta^{2}\over a_{1}}{dL^{3}\over D}}\\
&={a_{1}\over 8}y^{2}\cro{4\sqrt{2dL^{3}\over D}+{2a_{1}\over a_{2}^{2}}{dL^{3}\over D}}\le{a_{1}\over 8}y^{2}\cro{4\sqrt{2dL^{3}\over D}+{dL^{3}\over 3D}},
\end{align*}
%
since $a_{2}^{2}\ge 6 a_{1}$. The inequality $4\sqrt{2}u+u^{2}/3\le 1$ is satisfied for $u\in [0,\overline u]$ with $\overline u=1/\sqrt{33}$ and consequently, for $D=33dL^{3}\ge d$ and all $y\ge \beta^{-1}\sqrt{D}$, $\gw^{\sbQ}(\gP,\overline \gP,y)\le a_{1}y^{2}/8$. The conclusion then follows from (\ref{eq-bound-D}).




%



\subsection{Proof of Corollary~\ref{cor-reg}}\label{sect-pcor-reg}
Let us fix some arbitrary $m=(r,F)\in\cM$ and $g\in F$.
Under Assumption~\ref{ass-2} and~\eref{eq-hel}, for all $r'\in\cD$, $g'\in\sF$ and $w\in\sW$,
\[
h^{2}(r,r')= h^{2}(R,R')\le A^{2}h^{2}\!\left(R_{g(w)-g'(w)},R'\right)=A^{2}h^{2}\!\left(R_{g(w)},R'_{g'(w)}\right).
\]
Integrating this inequality with respect to $P_{W}$ gives
\begin{equation}\label{eq-125}
h(r,r')\le Ah(Q_{r,g},Q_{r',g'})\quad\mbox{for all }g'\in\sF.
\end{equation}
For all $r'\in\cD$, we deduce from~\eref{eq-125},~\eref{eq-hel} and~\eref{eq-hel2} that
\begin{align*}
d_{s}(f,g')&=h(Q_{s,f},Q_{s,g'})\le h(Q_{s,f},Q_{r',g'})+ h(Q_{r',g'},Q_{r,g'})+h(Q_{r,g'},Q_{s,g'})\\
&\le h(Q_{s,f},Q_{r',g'})+h(Q_{r',0},Q_{r,0})+h(Q_{r,0},Q_{s,0})\\
&=h(Q_{s,f},Q_{r',g'})+h(r,r')+h(s,r)\\
&\le h(Q_{s,f},Q_{r',g'})+Ah(Q_{r',g'},Q_{r,f})+h(s,r)\\
&\le h(Q_{s,f},Q_{r',g'})+A\cro{h(Q_{r',g'},Q_{s,f})+h(Q_{s,f},Q_{r,f})}+h(s,r)\\
&= h(Q_{s,f},Q_{r',g'})+A\cro{h(Q_{r',g'},Q_{s,f})+h(s,r)}+h(s,r)\\
&\le(1+A)\!\cro{h(Q_{s,f},Q_{r',g'})+h(s,r)}\\&=(1+A)\!\cro{h(P,Q_{r',g'})+h(s,r)}\!.
\end{align*}
Taking $r'=\widehat s$ and $g'=\widehat f$, we get
\begin{equation}\label{eq-120}
d_{s}(f,\widehat f)\le (1+A)\cro{h(P,\widehat P)+h(s,r)}\quad\mbox{for all }r\in\cD.
\end{equation}
Besides, 
\[
h(P,\sQ_{m})\le h(Q_{s,f},Q_{r,g})\le h(Q_{s,f},Q_{s,g})+h(Q_{s,g},Q_{r,g})=d_{s}(f,g)+h(s,r),
\]
and it follows from Theorem~\ref{thm-main2} that, on a set $\Omega_{\xi}$ of probability at least $1-e^{-\xi}$,
\[
Ch^{2}(P,\widehat P)\le d_{s}^{2}(f,g)+h^{2}(s,r)+n^{-1}[D_{n}(m)+\Delta(m)+\xi],
\]
with $D_{n}(m)$ given by (\ref{def-Dm}). Hence, by \eref{eq-120}, on $\Omega_{\xi}$, 
\begin{align}
d_{s}^{2}(f,\widehat f)\le&\;  2(1+A)^{2}\cro{h^{2}(P,\widehat P)+h^{2}(s,r)}\nonumber\\
\le&\;\frac{2(1+A)^{2}}{C}\cro{d_{s}^{2}(f,g)+h^{2}(s,r)+\frac{D_{n}(m)+\Delta(m)+\xi}{n}}\nonumber\\
&\;+2(1+A)^{2}h^{2}(s,r)\nonumber\\
\le&\; C'\left[d^{2}_{s}(f,g)+h^{2}(s,r)+n^{-1}(D_{n}(m)+\Delta(m)+\xi)\right],
\label{eq-127}
\end{align}
where $C'$ depends on $A$ and $\psi$. Using now~\eref{eq-125} with $r'=\widehat s$ and $g'=\widehat f$, we deduce that
\begin{align*}
h(s,\widehat s)&\le h\pa{s,r}+h(r,\widehat s)\le h\pa{s,r}
+Ah(Q_{r,f},Q_{\widehat s,\widehat f})\\&\le h\pa{s,r}+A\cro{h(Q_{r,f},Q_{s,f})+h(Q_{s,f},Q_{\widehat s,\widehat f})}\\
&\le(1+A)\cro{h(s,r)+h(P,\widehat P)}.
\end{align*}
This bound is the same as the one established in~\eref{eq-120} for $d_{s}(f,\widehat f)$. Arguing as before we derive that, on the same event $\Omega_{\xi}$, $h^{2}(s,\widehat s)$ is also not larger than the right-hand side of~\eref{eq-127}. The conclusion follows since $m=(r,F)$ is arbitrary in $\cM$. 

\subsection{Connection with robust tests}\label{Hpsi2}
As already mentioned in Section~\ref{ES2}, for all $\gQ$ and $\gQ'$ in ${\sbQ}$,
\[
a_{1}\gh^{2}(\gP,\gQ)-a_{0}\gh^{2}(\gP,\gQ')\le \E\cro{\gT(\bsX,\gq,\gq')}\le a_{0}\gh^{2}(\gP,\gQ)-a_{1}\gh^{2}(\gP,\gQ').
\]
The right-hand side of the inequality shows that $\E\cro{\gT(\bsX,\gq,\gq')}<0$ if $\gh^{2}(\gP,\gQ)<a_{1}a_{0}^{-1}\gh^{2}(\gP,\gQ')$ while the left-hand side shows that it is positive when $\gh^{2}(\gP,\gQ')<a_{1}a_{0}^{-1}\gh^{2}(\gP,\gQ)$. In particular if $\gT(\bsX,\gq,\gq')$ is close enough to its expectation, its sign may be used as a test statistic to decide which of the two probabilities $\gQ$ and $\gQ'$ is closer to $\gP$. Bounds for the probabilities of errors of this test are provided by the following lemma.
\begin{lem}\label{lem-test}
Let $\psi$ satisfy Assumption~\ref{H-debase}, $\gP\in\sbP$, $\gQ,\gQ'\in\sbQ$ and $x\ge 0$. Whatever the representation $\cR(\sbQ)$ of the $\rho$-model $\sbQ$,
if $\gh^{2}(\gP,\gQ)<a_{1}a_{0}^{-1}\gh^{2}(\gP,\gQ')$, then
\begin{align*}
\lefteqn{\P\cro{\gT(\bsX,\gq,\gq')\ge x}}\hspace{10mm}\\
&\le \exp\cro{-\pa{a_{1}\gh^{2}(\gP,\gQ')-a_{0}\gh^{2}(\gP,\gQ)+x}^{2}\over 2\cro{(a_{2}^{2}+a_{1}/3)\gh^{2}(\gP,\gQ')+(a_{2}^{2}-a_{0}/3)\gh^{2}(\gP,\gQ)+x/3}}
\end{align*}
while if  $\gh^{2}(\gP,\gQ')<a_{1}a_{0}^{-1}\gh^{2}(\gP,\gQ)$, then
\begin{align*}
\lefteqn{\P\cro{\gT(\bsX,\gq,\gq')\le -x}}\hspace{10mm}\\
&\le \exp\cro{-\pa{a_{1}\gh^{2}(\gP,\gQ)-a_{0}\gh^{2}(\gP,\gQ')+x}^{2}\over 2\cro{(a_{2}^{2}+a_{1}/3)\gh^{2}(\gP,\gQ)+(a_{2}^{2}-a_{0}/3)\gh^{2}(\gP,\gQ')+x/3}}.
\end{align*}
\end{lem}
\begin{proof}
Using the right-hand side of~\eref{eq-ET}, we may write
\begin{align*}
\lefteqn{\P\cro{\gT(\bsX,\gq,\gq')\ge x}}\hspace{5mm}\\
&=\P\cro{\gT(\bsX,\gq,\gq')-\E\cro{\gT(\bsX,\gq,\gq')}\ge x-\E\cro{\gT(\bsX,\gq,\gq')}\str{3.7}}\\
&\le\P\cro{\gT(\bsX,\gq,\gq')-\E\cro{\gT(\bsX,\gq,\gq')}\ge a_{1}\gh^{2}(\gP,\gQ')-a_{0}\gh^{2}(\gP,\gQ)+x\str{3.7}}.
\end{align*}
We obtain the first inequality by applying the Bernstein deviation inequality (see Massart~\citeyearpar{MR2319879}, inequality~(2.16)) to $\gT(\bsX,\gq,\gq')-\E\cro{\gT(\bsX,\gq,\gq')}$. Indeed, $\gT(\bsX,\gq,\gq')$ is a sum of independent random variables with absolute value not larger than 1 (since $|\psi|$ is bounded by 1) and by~\eref{eq-var}, 
\begin{equation}\label{varT}
\sum_{i=1}^{n}\int_{\sX_{i}}\psi^{2}\pa{\sqrt{q'_{i}\over q_{i}}}dP_{i}\le a_{2}^{2}\cro{\gh^{2}(\gP,\gQ)+\gh^{2}(\gP,\gQ')}.
\end{equation}
To complete the proof, note that the second inequality is a consequence of the first one by exchanging the roles of $\gQ$ and $\gQ'$ and using~\eref{eq-antisym}. 
\end{proof}

\end{document}